\newtheorem{theorem}{Theorem}[section]
\newtheorem{proposition}[theorem]{Proposition}
\newtheorem{corollary}[theorem]{Corollary}
\newtheorem{remark}[theorem]{Remark}
\newtheorem{lemma}[theorem]{Lemma}
\title{A mixed problem for the Laplace operator in a domain with moderately close holes}
\author{ 
Matteo Dalla Riva \thanks{Department of Mathematics, The University of Tulsa, USA \& Department of Mathematics, Aberystwyth University, Ceredigion SY23 3BZ, Wales, UK.} \ and  Paolo Musolino\thanks{Dipartimento di Matematica, Universit\`a degli Studi di Padova,  Italy.}}
\date{\ }
\begin{document}

\maketitle

\noindent
{\bf Abstract:}  We investigate the behavior of the solution of a mixed problem in a domain with two moderately close holes. We introduce a positive  parameter $\epsilon$ and we define a perforated domain $\Omega_{\epsilon}$ obtained by making two small perforations in an open set. Both the size and the distance of the cavities tend to $0$ as $\epsilon \to 0$. For  $\epsilon$ small, we denote by $u_{\epsilon}$ the  solution of a mixed problem for the Laplace equation in $\Omega_{\epsilon}$. We describe what happens to $u_{\epsilon}$ as $\epsilon \to 0$  in terms of real analytic maps and we compute an asymptotic expansion. 
\vspace{9pt}

\noindent
{\bf Keywords:}  mixed problem; singularly perturbed perforated domain; moderately close holes; Laplace operator; real analytic continuation in Banach space; asymptotic expansion \par
\vspace{9pt}

\noindent   
{{\bf 2010 Mathematics Subject Classification:}} 35J25; 31B10; 45A05; 35B25; 35C20 %

\section{Introduction}
\label{introd}
The analysis of  singular domain perturbation problems for linear equations and system of partial differential equations has caught the attention of several authors. In particular, a wide literature has been dedicated to the study of boundary value problems defined in domains with small holes or inclusions shrinking to points. This type of problems is of interest not only for the mathematical aspects but also in view of concrete applications to the investigation of physical models in fluid dynamics, in elasticity, and in thermodynamics. For example, problems on domains with small holes or inclusions can arise in the modeling of dilute composites or of perforated elastic bodies. In this paper, we will focus on a mixed problem for the Laplace operator in a bounded domain with two moderately close small holes. In other words, we will consider a domain with two cavities such that both their size and the distance between them tend to zero. However, we will assume that the perforations are `moderately close', \textit{i.e.}, the distance tends to zero `not faster' than the size.

In order to introduce the problem, we first define the geometric setting. We fix once for all  a natural number
\[
n\in {\mathbb{N}}\setminus\{0,1 \}\, .
\]
Then we consider $\alpha\in]0,1[$ and three subsets $\Omega^i_1$, $\Omega^i_2$,  $\Omega^o$ of ${\mathbb{R}}^{n}$ satisfying the following assumption:
\begin{equation}\label{dom}
\begin{split}
&\text{$\Omega^i_1$, $\Omega^i_2$,  $\Omega^o$ are bounded open connected subsets of ${\mathbb{R}}^{n}$}\\
&\text{of class $C^{1,\alpha}$ such that ${\mathbb{R}}^{n}\setminus{\mathrm{cl}}\Omega^i_1$, ${\mathbb{R}}^{n}\setminus{\mathrm{cl}}\Omega^i_2$ and ${\mathbb{R}}^{n}\setminus{\mathrm{cl}}\Omega^o$ are}\\
&\text{connected  and  that $0\in \Omega^i_1\cap\Omega^i_2 \cap \Omega^o$}.
\end{split}
\end{equation}
The letter `$i$' stands for `inner' and the letter `$o$' stands for `outer'. The symbol `${\mathrm{cl}}$' denotes the closure.  The set $\Omega^o$ will play the role of the `unperturbed' domain, where we make two perforations of the shape of $\Omega^i_1$ and of $\Omega^i_2$, respectively.  We also fix two points
\begin{equation}\label{p}
p^1, p^2 \in \Omega^o\, ,\qquad p^1 \neq p^2\, .
\end{equation}
Then we take $\epsilon_0>0$ and a function $\eta$ from $]0,\epsilon_0[$ to $]0,+\infty[$ such that
\begin{equation}\label{eta}
\lim_{\epsilon\to 0^+}\eta(\epsilon)=0 \qquad \text{and} \qquad \lim_{\epsilon \to 0^+} \frac{\epsilon}{\eta(\epsilon)}=r_\ast \in [0,+\infty[\, .
\end{equation}
The function $\eta$ will control the distance between the holes, while the parameter $\epsilon$ will determine their size. We assume that
\begin{equation}\label{assrast}
\left(p^1+r_\ast\mathrm{cl}\Omega^i_1\right)\cap\left(p^2+r_\ast\mathrm{cl}\Omega^i_2\right)= \emptyset\, .
\end{equation}
Possibly shrinking $\epsilon_0$, we may also assume that
\begin{equation}\label{e0}
\begin{split}
&\left(p^1+\frac{\epsilon}{\eta(\epsilon)}\mathrm{cl}\Omega^i_1\right)\cap\left(p^2+\frac{\epsilon}{\eta(\epsilon)}\mathrm{cl}\Omega^i_2\right)= \emptyset  \qquad \forall \epsilon \in ]0,\epsilon_0[\, ,\\
&\bigg (\eta(\epsilon)p^1+\epsilon\mathrm{cl}\Omega^i_1\bigg)\cup\bigg(\eta(\epsilon)p^2+\epsilon\mathrm{cl}\Omega^i_2\bigg) \subseteq \Omega^o \qquad \forall \epsilon \in ]0,\epsilon_0[\, .
\end{split}
\end{equation}
Then we introduce the perforated domain
\[
\Omega_{\epsilon}\equiv \Omega^o \setminus  \bigcup_{j=1}^2 \bigg (\eta(\epsilon)p^j+\epsilon\mathrm{cl}\Omega^i_j\bigg) \qquad \forall \epsilon\in ]0,\epsilon_{0}[\, .
\]
In other words, the set $\Omega_{\epsilon}$ is obtained by removing from $\Omega^o$ the two sets $\eta(\epsilon)p^1+\epsilon\mathrm{cl}\Omega^i_1$ and $\eta(\epsilon)p^2+\epsilon\mathrm{cl}\Omega^i_2$.  As $\epsilon \to 0^+$, both the size of the perforations and their distance tend to $0$. Next, for each $\epsilon$ positive and small enough, we want to introduce a mixed problem for the Laplace operator in $\Omega_\epsilon$. Namely, we consider a Dirichlet condition on $\partial \Omega^o$ and Neumann conditions on the boundary of the holes. Thus, we take a function $f_1 \in C^{0,\alpha}(\partial \Omega^i_1)$, a function $f_2  \in C^{0,\alpha}(\partial \Omega^i_2)$, a function $g$ in $C^{1,\alpha}(\partial \Omega^o)$, and for each $\epsilon \in ]0,\epsilon_0[$ we consider the following mixed problem:
\begin{equation}
\label{bvpe}
\left\{
\begin{array}{ll}
\Delta u(x)=0 & \forall x \in \Omega_{\epsilon}\,,\\
\frac{\partial }{\partial \nu_{\eta(\epsilon)p^j+\epsilon\Omega^i_j}}u(x)=f_j\Big(\big(x-\eta(\epsilon)p^j\big)/\epsilon\Big) & \forall x \in  \eta(\epsilon)p^j+\epsilon\partial\Omega^i_j\, ,  \forall j\in \{1,2\}\, ,\\
u(x)=g(x) & \forall x \in \partial \Omega^o\, ,
\end{array}
\right.
\end{equation}
where $\nu_{\eta(\epsilon)p^j+\epsilon\Omega^i_j}$ denotes the outward unit normal to $ \eta(\epsilon)p^j+\epsilon\partial\Omega^i_j$ for $j\in \{1,2\}$.

Then, if $\epsilon \in ]0,\epsilon_0[$,  problem \eqref{bvpe} has a unique solution in $C^{1,\alpha}(\mathrm{cl}\Omega_{\epsilon})$ and we denote such a solution by $u_{\epsilon}$.  We are interested in studying the behavior of $u_\epsilon$ as $\epsilon \to 0$ and thus we pose the following questions.
\begin{enumerate}
\item[(i)] Let $x$ be a fixed point in 
$\Omega^o\setminus\{0\}$. What  can be said of the map 
$\epsilon\mapsto u_{\epsilon}(x)$ when $\epsilon$ is close to $0$ and 
positive?
\item[(ii)]
Let $t$ be a fixed point in 
${\mathbb{R}}^{n}\setminus 
\cup_{j=1}^2(p^j +r_\ast \Omega^i_j)$. What  can be said of the map 
$\epsilon\mapsto u_{\epsilon}(\eta(\epsilon) t)$ when $\epsilon$ is close to $0$ and 
positive?
\item[(iii)]
Let $j \in \{1,2\}$. Let $t$ be a fixed point of $\mathbb{R}^n\setminus\Omega^i_j$ such that $p^j +r_\ast t \not \in (p^l+r_\ast \mathrm{cl}\Omega_l)$ if $l \neq j$. What can be said of the map $\epsilon\mapsto u_{\epsilon}(\eta(\epsilon)p^j+\epsilon t)$ when $\epsilon$ is close to $0$ and positive?
\end{enumerate}	
In a sense, question (i) concerns the `macroscopic' behavior of $u_{\epsilon}$ far from the holes $\eta(\epsilon)p^1+\epsilon \Omega^i_1$ and $\eta(\epsilon)p^2+\epsilon \Omega^i_2$, whereas question (ii) concerns the `microscopic' behavior of $u_{\epsilon}$ in proximity of centers of the perforations, and question (iii) concerns the `microscopic' behavior of $u_{\epsilon}$ in proximity of the boundary of one of the perforations.	

Boundary value problems in domains with small holes are typical in the frame of asymptotic analysis and are usually investigated by means of asymptotic expansion methods.    As an example, we mention the method of matching outer and inner asymptotic expansions proposed by Il'in (see \cite{Il78}, \cite{Il92}, and \cite{Il99}) and the compound asymptotic expansion method of Maz'ya, Nazarov, and Plamenevskij, which allows the treatment of general Douglis--Nirenberg elliptic boundary value problems in domains with  perforations and corners (cf.~\cite{MaNaPl00}). Moreover, in Kozlov, Maz'ya, and Movchan \cite{KoMaMo99} one can find the study of boundary value problems in domains depending on a small parameter $\epsilon$ in such a way that the limit regions as $\epsilon$ tends to $0$ consist of subsets of different space dimensions. More recently, Maz'ya, Movchan, and Nieves provided the  asymptotic analysis of Green's kernels in domains with small cavities by applying the method of mesoscale asymptotic approximations  (cf.~\cite{MaMoNi13}). We also mention  Bonnaillie-No\"el, Lacave, and Masmoudi \cite{BoLaMa}, Chesnel and Claeys \cite{ChCl14}, and Dauge, Tordeux, and Vial \cite{DaToVi10}.

Problems in perforated domains find several applications in the frame of shape and topological optimization. For a detailed analysis, we refer to Novotny and Soko\l owsky \cite{NoSo13}, where the authors analyze the topological derivative to study problems in elasticity and heat diffusion. The topological derivative is indeed defined as the first term of the asymptotic expansion of a given shape functional with respect to a parameter which measures the singular domain perturbation (as, \textit{e.g.}, the diameter of a hole). Moreover, for several applications to inverse problems we refer, \textit{e.g.}, to the monograph Ammari and Kang \cite{AmKa07}.

In particular, boundary value problems in domains with moderately close holes have been deeply studied in Bonnaillie-No\"el, Dambrine, Tordeux, and Vial \cite{BoDaToVi07, BoDaToVi09}, Bonnaillie-No\"el and Dambrine \cite{BoDa13}, and Bonnaillie-No\"el, Dambrine, and Lacave \cite{BoDaLa}, where the authors exploit the method of multiscale asymptotic expansions. More precisely, in \cite{BoDaToVi09} they carefully analyze the case when $\eta(\epsilon)=\epsilon^\beta$ for $\beta \in ]0,1[$ and they provide asymptotic expansions.

Here, instead, we answer the questions in (i), (ii), (iii) by representing  the maps of (i), (ii), (iii) in terms of real 
analytic maps and in terms of  known 
functions of $\epsilon$ (such as $\eta(\epsilon)$, $\epsilon/\eta(\epsilon)$, $\log \eta(\epsilon)$, {\it etc}.).
We observe that our approach does have its advantages. Indeed, if for 
example we know that the function in (i) equals for $\epsilon>0$ a real analytic function defined in a whole neighborhood of $\epsilon=0$, then we know that such a map can be expanded in  power series for $\epsilon$ small. Moreover, we emphasize that we do not make any assumption on the form of the function $\eta(\epsilon)$ and that, by setting $\varrho_1=\eta(\epsilon)$ and $\varrho_2=\epsilon/\eta(\epsilon)$, we can treat $\varrho_1$ and $\varrho_2$ as two independents variables and prove real analyticity results for the solution upon the pair $(\varrho_1,\varrho_2)$.  In particular, one can deduce asymptotic expansions in the new variable $(\varrho_1,\varrho_2)$ around $(0,r_\ast)$.

Such an approach has been carried out for problems for the Laplace operator in a domain with a small hole  (cf., \textit{e.g.}, \cite{DaMu12,DaMu15}, Lanza de Cristoforis \cite{La07,La10}), and has later been extended to problems related to the system of equations of the linearized elasticity (cf., \textit{e.g.}, the first-named author and Lanza de Cristoforis \cite{DaLa10a}) and to the Stokes system (cf., \textit{e.g.}, \cite{Da13}). Moreover, analyticity results have been obtained in the frame of perturbation problems in spectral theory (cf., \textit{e.g.}, Buoso and Provenzano \cite{BuPr} and Lamberti and Lanza de Cristoforis \cite{LaLa04}).

The paper is organized as follows. In Section \ref{nota}, we introduce some notation and in Section \ref{for} we introduce a more general formulation of our problem. In Section \ref{prel}, we introduce some preliminary results. In Section \ref{finteq}, we formulate our problem in terms of integral equations. In Section \ref{fure}, we prove our main result, which answers our questions (i), (ii), (iii) above, and in Section \ref{asy} we compute an asymptotic expansion of the solution for $n=2$ and $r_\ast=0$.

\section{Notation}\label{nota}

We  denote the norm on 
a   normed space ${\mathcal X}$ by $\|\cdot\|_{{\mathcal X}}$. Let 
${\mathcal X}$ and ${\mathcal Y}$ be normed spaces. We endow the  
space ${\mathcal X}\times {\mathcal Y}$ with the norm defined by 
$\|(x,y)\|_{{\mathcal X}\times {\mathcal Y}}\equiv \|x\|_{{\mathcal X}}+
\|y\|_{{\mathcal Y}}$ for all $(x,y)\in  {\mathcal X}\times {\mathcal 
Y}$, while we use the Euclidean norm for ${\mathbb{R}}^{n}$. The symbol ${\mathbb{N}}$ denotes the 
set of natural numbers including $0$. If $(i,j) \in \mathbb{N}^2$, we denote by $\delta_{i,j}$ the Kronecker symbol, defined by setting $\delta_{i,j}=1$ if $i=j$ and $\delta_{i,j}=0$ if $i \neq j$. Let 
${\mathbb{D}}\subseteq {\mathbb {R}}^{n}$. Then $\mathrm{cl}{\mathbb{D}}$ 
denotes the 
closure of ${\mathbb{D}}$, $\partial{\mathbb{D}}$ denotes the boundary of ${\mathbb{D}}$, and $\nu_{\mathbb{D}}$ denotes the outer unit normal to $\partial \mathbb{D}$, where it is defined. We also set ${\mathbb{D}}^{-}\equiv {\mathbb {R}}^{n}\setminus{\mathrm{cl}}{\mathbb{D}}$. For all $R>0$, $ x\in{\mathbb{R}}^{n}$, 
$x_{j}$ denotes the $j$-th coordinate of $x$, 
$| x|$ denotes the Euclidean modulus of $ x$ in
${\mathbb{R}}^{n}$, and ${\mathbb{B}}_{n}( x,R)$ denotes the ball $\{
y\in{\mathbb{R}}^{n}:\, | x- y|<R\}$. 
Let $\Omega$ be an open 
subset of ${\mathbb{R}}^{n}$. The space of $m$ times continuously 
differentiable real-valued functions on $\Omega$ is denoted by 
$C^{m}(\Omega,{\mathbb{R}})$, or more simply by $C^{m}(\Omega)$. 
Let $r\in {\mathbb{N}}\setminus\{0\}$.
Let $f\in \left(C^{m}(\Omega)\right)^{r}$. The 
$s$-th component of $f$ is denoted $f_{s}$, and $Df$ denotes the jacobian matrix
$\left(\frac{\partial f_s}{\partial
x_l}\right)_{  (s,l)\in \{1,\dots ,r\}\times \{1,\dots ,n\}       }$. 
For a multi-index  $\eta\equiv
(\eta_{1},\dots ,\eta_{n})\in{\mathbb{N}}^{n}$ we also set $|\eta |\equiv
\eta_{1}+\dots +\eta_{n}  $. Then $D^{\eta} f$ denotes
$\frac{\partial^{|\eta|}f}{\partial
x_{1}^{\eta_{1}}\dots\partial x_{n}^{\eta_{n}}}$.    The
subspace of $C^{m}(\Omega )$ of those functions $f$ whose derivatives $D^{\eta }f$ of
order $|\eta |\leq m$ can be extended with continuity to 
$\mathrm{cl}\Omega$  is  denoted $C^{m}(
\mathrm{cl}\Omega )$. 
The
subspace of $C^{m}(\mathrm{cl}\Omega ) $  whose
functions have $m$-th order derivatives that are
uniformly H\"{o}lder continuous  with exponent  $\alpha\in
]0,1]$ is denoted $C^{m,\alpha} (\mathrm{cl}\Omega )$  
(cf., \textit{e.g.},~Gilbarg and 
Trudinger~\cite{GiTr83}). The subspace of $C^{m}(\mathrm{cl}\Omega ) $ of those functions $f$ such that $f_{|{\mathrm{cl}}(\Omega\cap{\mathbb{B}}_{n}(0,R))}\in
C^{m,\alpha}({\mathrm{cl}}(\Omega\cap{\mathbb{B}}_{n}(0,R)))$ for all $R\in]0,+\infty[$ is denoted $C^{m,\alpha}_{{\mathrm{loc}}}(\mathrm{cl}\Omega ) $.  \par
Now let $\Omega $ be a bounded
open subset of  ${\mathbb{R}}^{n}$. Then $C^{m}(\mathrm{cl}\Omega )$ 
and $C^{m,\alpha }({\mathrm{cl}}
\Omega )$ are endowed with their usual norm and are well-known to be 
Banach spaces. 
We say that a bounded open subset $\Omega$ of ${\mathbb{R}}^{n}$ is of class 
$C^{m}$ or of class $C^{m,\alpha}$, if ${\mathrm{cl}}
\Omega$ is a 
manifold with boundary imbedded in 
${\mathbb{R}}^{n}$ of class $C^{m}$ or $C^{m,\alpha}$, respectively
 (cf., \textit{e.g.}, Gilbarg and Trudinger~\cite[\S 6.2]{GiTr83}). 
We denote by 
$
\nu_{\Omega}
$
the outward unit normal to $\partial\Omega$.  For standard properties of functions 
in Schauder spaces, we refer the reader to Gilbarg and 
Trudinger~\cite{GiTr83}.

\par
If $M$ is a manifold  imbedded in 
${\mathbb{R}}^{n}$ of class $C^{m,\alpha}$, with $m\geq 1$, 
$\alpha\in ]0,1[$, one can define the Schauder spaces also on $M$ by 
exploiting the local parametrizations. In particular, one can consider 
the spaces $C^{k,\alpha}(\partial\Omega)$ on $\partial\Omega$ for 
$0\leq k\leq m$ with $\Omega$ a bounded open set of class $C^{m,\alpha}$,
and the trace operator from $C^{k,\alpha}({\mathrm{cl}}\Omega)$ to
$C^{k,\alpha}(\partial\Omega)$ is linear and continuous. 
  We denote by $d\sigma$ the area element of a manifold imbedded in ${\mathbb{R}}^{n}$. Also, we find convenient to set
\[
C^{k,\alpha}(\partial \Omega)_{0}\equiv
\left\{
f\in C^{k,\alpha}(\partial \Omega):\,\int_{\partial\Omega}f\,d\sigma=0 
\right\}\,.
\]
For the 
definition and properties of real analytic maps, we refer to Deimling \cite[p.~150]{De85}. In particular, we mention that the 
pointwise product in Schauder spaces is bilinear and continuous, and 
thus real analytic (cf., \textit{e.g.}, Lanza de Cristoforis and Rossi \cite[pp.~141, 142]{LaRo04}).\par

Let
$S_{n}$ be the function from ${\mathbb{R}}^{n}\setminus\{0\}$ to ${\mathbb{R}}$ defined by
\[
S_{n}(x)\equiv
\left\{
\begin{array}{lll}
\frac{1}{s_{n}}\log |x| \qquad &   \forall x\in 
{\mathbb{R}}^{n}\setminus\{0\},\quad & {\mathrm{if}}\ n=2\,,
\\
\frac{1}{(2-n)s_{n}}|x|^{2-n}\qquad &   \forall x\in 
{\mathbb{R}}^{n}\setminus\{0\},\quad & {\mathrm{if}}\ n>2\,,
\end{array}
\right.
\]
where $s_{n}$ denotes the $(n-1)$-dimensional measure of 
$\partial{\mathbb{B}}_{n}(0,1)$. $S_{n}$ is well-known to be a  
fundamental solution of the Laplace operator. \par

We now introduce the simple layer potential. Let  $\alpha\in]0,1[$. Let $\Omega$ be a bounded open subset of ${\mathbb{R}}^{n}$ of class $C^{1,\alpha}$. If $\mu\in C^{0}(\partial\Omega)$, we set
\[
v[\partial\Omega,\mu](x)\equiv
\int_{\partial\Omega}S_{n}(x-y)\mu(y)\,d\sigma_{y}
\qquad\forall x\in {\mathbb{R}}^{n}\,.
\]
As is well-known, if $\mu\in C^{0}(\partial{\Omega})$, then $v[\partial\Omega,\mu]$ is continuous in  ${\mathbb{R}}^{n}$. Moreover, if $\mu\in C^{0,\alpha}(\partial\Omega)$, then the function 
$v^{+}[\partial\Omega,\mu]\equiv v[\partial\Omega,\mu]_{|{\mathrm{cl}}\Omega}$ belongs to $C^{1,\alpha}({\mathrm{cl}}\Omega)$, and the function 
$v^{-}[\partial\Omega,\mu]\equiv v[\partial\Omega,\mu]_{|\mathbb{R}^n \setminus \Omega}$ belongs to $C^{1,\alpha}_{\mathrm{loc}}
(\mathbb{R}^n \setminus \Omega)$. 

\section{A more general formulation}\label{for}

In this section, we formulate a more general version of the problem we are interested in. Then, by the analysis of such a new problem, we are able to deduce our results concerning the behavior of the solution $u_\epsilon$ for $\epsilon$ close to $0$. In a sense, what we are going to do it is to replace $\eta(\epsilon)$ by $\varrho_1$ and $\epsilon/\eta(\epsilon)$ by $\varrho_2$, and to analyze the dependence of the solution of the problem upon $\varrho_1$ and $\varrho_2$, which we think as two independent variables.

Let $\alpha\in]0,1[$. Let $\Omega^i_1$, $\Omega^i_2$, $\Omega^o$ be as in \eqref{dom}. Let $p^1$, $p^2$ be as in \eqref{p}. Let $r_\ast \in [0,+\infty[$ be such that assumption \eqref{assrast} holds. Then we fix an open neighborhood $\tilde{\mathcal{U}}$ of $(0,r_\ast)$ in $\mathbb{R}^2$, such that
\begin{equation}\label{tildeU}
\begin{split}
&\left(p^1+\varrho_2\mathrm{cl}\Omega^i_1\right)\cap\left(p^2+\varrho_2\mathrm{cl}\Omega^i_2\right)= \emptyset  \qquad \forall (\varrho_1,\varrho_2) \in \tilde{\mathcal{U}}\, ,\\
&\bigg (\varrho_1 p^1+\varrho_1 \varrho_2 \mathrm{cl}\Omega^i_1\bigg)\cup\bigg(\varrho_1 p^2+\varrho_1 \varrho_2\mathrm{cl}\Omega^i_2\bigg) \subseteq \Omega^o \qquad \forall (\varrho_1,\varrho_2)\in \tilde{\mathcal{U}}\, .
\end{split}
\end{equation}
Then we introduce the perforated domain
\[
\Omega(\varrho_1,\varrho_2)\equiv \Omega^o \setminus  \bigcup_{j=1}^2 \bigg (\varrho_1p^j+\varrho_1 \varrho_2\mathrm{cl}\Omega^i_j\bigg) \qquad \forall (\varrho_1,\varrho_2) \in \tilde{\mathcal{U}}\, .
\]
Next we take a function $f_1 \in C^{0,\alpha}(\partial \Omega^i_1)$, a function $f_2  \in C^{0,\alpha}(\partial \Omega^i_2)$, a function $g$ in $C^{1,\alpha}(\partial \Omega^o)$, and for each pair $(\varrho_1,\varrho_2) \in \tilde{\mathcal{U}} \cap ]0,+\infty[^2$ we consider the following mixed problem
\begin{equation}
\label{bvprho}
\left\{
\begin{array}{ll}
\Delta u(x)=0 & \forall x \in \Omega(\varrho_1,\varrho_2)\,,\\
\frac{\partial }{\partial \nu_{\varrho_1p^j+\varrho_1 \varrho_2\Omega^i_j}}u(x)=f_j\Big(\big(x-\varrho_1p^j\big)/(\varrho_1 \varrho_2)\Big) & \forall x \in   \varrho_1p^j+\varrho_1 \varrho_2\partial\Omega^i_j,  \forall j\in \{1,2\} ,\\
u(x)=g(x) & \forall x \in \partial \Omega^o\, ,
\end{array}
\right.
\end{equation}
where $\nu_{\varrho_1p^j+\varrho_1 \varrho_2\Omega^i_j}$ denotes the outward unit normal to $\varrho_1p^j+\varrho_1 \varrho_2\partial\Omega^i_j$ for $j\in \{1,2\}$. If $(\varrho_1, \varrho_2) \in \tilde{\mathcal{U}} \cap ]0,+\infty[^2$,  problem \eqref{bvprho} has a unique solution in $C^{1,\alpha}(\mathrm{cl}\Omega(\varrho_1,\varrho_2))$ and we denote such a solution by $u[\varrho_1,\varrho_2]$. Clearly, if $\eta$, $r_\ast$ are as in \eqref{eta} and if $\epsilon_{0}$ is such that $(\eta(\epsilon), \epsilon/ \eta(\epsilon)) \in \tilde{\mathcal{U}} \cap ]0,+\infty[^2$ for all $\epsilon \in ]0,\epsilon_0[$, then
\[
\Omega_{\epsilon}=\Omega(\eta(\epsilon), \epsilon/ \eta(\epsilon)) \qquad \text{and} \qquad u_{\epsilon}=u[\eta(\epsilon), \epsilon/ \eta(\epsilon)] \, ,
\]
for all $\epsilon \in ]0,\epsilon_0[$.

\section{Preliminaries}\label{prel}

In this section we collect some preliminary results concerning mixed problems for the Laplace operator.

First of all, by the Divergence Theorem, we deduce the following uniqueness result.

\begin{proposition}\label{prop:uniq}
Let $\alpha \in ]0,1[$. Let $\mathcal{O}^i$, $\mathcal{O}^o$ be bounded open subsets of $\mathbb{R}^n$ of class $C^{1,\alpha}$ such that $\mathcal{O}^o$, $\mathbb{R}^n \setminus \mathrm{cl}\mathcal{O}^i$, and $\mathbb{R}^n \setminus \mathrm{cl}\mathcal{O}^o$ are connected and that $\mathrm{cl}\mathcal{O}^i \subseteq \mathcal{O}^o$. Let $v \in C^{1,\alpha}(\mathrm{cl}\mathcal{O}^o \setminus \mathcal{O}^i)$ be such that
\begin{equation}
\label{eq:uniq}
\left\{
\begin{array}{ll}
\Delta v(x)=0 & \forall x \in \mathcal{O}^o\setminus \mathrm{cl}\mathcal{O}^i\,,\\
\frac{\partial }{\partial \nu_{\mathcal{O}^i}}v(x)=0 & \forall x \in  \partial  \mathcal{O}^i\, ,\\
v(x)=0 & \forall x \in \partial \mathcal{O}^o\, .
\end{array}
\right.
\end{equation}
Then $v=0$ in $\mathrm{cl}\mathcal{O}^o \setminus \mathcal{O}^i$.
\end{proposition}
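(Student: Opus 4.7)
The plan is to apply Green's first identity to $v$ on the bounded domain $\mathcal{O}^o\setminus \mathrm{cl}\,\mathcal{O}^i$, exploiting the $C^{1,\alpha}$ regularity of $v$ up to the boundary and the fact that $v$ is harmonic. Namely, I would write
\[
\int_{\mathcal{O}^o\setminus\mathrm{cl}\,\mathcal{O}^i}|\nabla v(x)|^2\,dx
=\int_{\partial\mathcal{O}^o}v(x)\,\frac{\partial v}{\partial\nu_{\mathcal{O}^o}}(x)\,d\sigma_x-\int_{\partial\mathcal{O}^i}v(x)\,\frac{\partial v}{\partial\nu_{\mathcal{O}^i}}(x)\,d\sigma_x,
\]
where the sign on the inner boundary reflects that the outward normal to $\mathcal{O}^o\setminus\mathrm{cl}\,\mathcal{O}^i$ on $\partial\mathcal{O}^i$ is $-\nu_{\mathcal{O}^i}$. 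The Dirichlet condition $v=0$ on $\partial\mathcal{O}^o$ kills the first boundary integral, while the homogeneous Neumann condition $\partial v/\partial\nu_{\mathcal{O}^i}=0$ on $\partial\mathcal{O}^i$ kills the second, so $\int|\nabla v|^2\,dx=0$ and $\nabla v\equiv 0$ on $\mathcal{O}^o\setminus\mathrm{cl}\,\mathcal{O}^i$.

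Next, I would conclude that $v$ is locally constant, hence constant on each connected component of $\mathcal{O}^o\setminus\mathrm{cl}\,\mathcal{O}^i$. To finish, I need every such component to touch $\partial\mathcal{O}^o$, and for this the connectedness of $\mathcal{O}^o\setminus\mathrm{cl}\,\mathcal{O}^i$ itself is the cleanest route. This is the one nontrivial point, and it follows from the standing hypotheses: since $\mathrm{cl}\,\mathcal{O}^i\subseteq\mathcal{O}^o$ and $\mathbb{R}^n\setminus\mathrm{cl}\,\mathcal{O}^i$ is connected, any two points of $\mathcal{O}^o\setminus\mathrm{cl}\,\mathcal{O}^i$ may be joined by a path in the connected open set $\mathcal{O}^o$ and then, where this path crosses $\mathrm{cl}\,\mathcal{O}^i$, it may be locally detoured through the connected exterior $\mathbb{R}^n\setminus\mathrm{cl}\,\mathcal{O}^i$ while remaining in $\mathcal{O}^o$ (a small enough detour stays inside $\mathcal{O}^o$ because $\mathrm{cl}\,\mathcal{O}^i\subseteq\mathcal{O}^o$ and $\mathcal{O}^o$ is open). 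Once connectedness is granted, $v$ is a single constant on $\mathcal{O}^o\setminus\mathrm{cl}\,\mathcal{O}^i$, and the Dirichlet condition on $\partial\mathcal{O}^o$ forces this constant to be $0$, yielding $v\equiv 0$ on $\mathrm{cl}\,\mathcal{O}^o\setminus\mathcal{O}^i$.

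The main obstacle, as already indicated, is really just the topological step of making the connectedness of the perforated domain rigorous; the analytic heart of the argument (Green's identity plus the energy computation) is routine once this is in hand. I would expect the authors to either invoke connectedness as essentially obvious from the hypotheses on $\mathcal{O}^i$ and $\mathcal{O}^o$, or to phrase the conclusion component-by-component and note that the Neumann condition plus the maximum principle on any component whose boundary lies entirely in $\partial\mathcal{O}^i$ would still give $v=0$ there via a separate argument, but the cleanest presentation is the connected one above.
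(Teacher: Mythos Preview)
Your approach is correct and coincides with the paper's: the paper does not give a detailed proof but simply states that the result follows ``by the Divergence Theorem,'' which is precisely the Green's-identity energy argument you spell out. Your write-up is in fact more detailed than the paper's one-line justification, including the discussion of connectedness of the perforated domain, which the paper leaves implicit.
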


In the following lemma, we collect some well-known results of classical potential theory (cf.~Folland \cite[Ch.~3]{Fo95}, Lanza de Cristoforis and Rossi \cite[Thm.~3.1]{LaRo04}, Miranda \cite[Thm~5.I]{Mi65}).

\begin{lemma}\label{lem:smp}  
Let $\alpha \in ]0,1[$. Let $\Omega$ be a bounded open  subset of $\mathbb{R}^n$ of class $C^{1,\alpha}$.  Then the following statements hold.
\begin{enumerate}
\item[(i)]  The map from $C^{0,\alpha}(\partial \Omega)$ to $C^{1,\alpha}(\mathrm{cl}\Omega)$ which takes $\mu$ to $v^+[\partial \Omega, \mu]$ is linear and continuous. Similarly, if $\tilde{\Omega}$ is a bounded open  subset of $\mathbb{R}^n\setminus\mathrm{cl}\Omega$, then the map from $C^{0,\alpha}(\partial \Omega)$ to $C^{1,\alpha}(\mathrm{cl}\tilde\Omega)$ which takes $\mu$ to $v^-[\partial \Omega, \mu]_{|\mathrm{cl}\tilde\Omega}$ is linear and continuous. 
\item[(ii)] Let $\Omega$ be connected. The map from $C^{0,\alpha}(\partial\Omega)_0\times \mathbb{R}$ to $C^{1,\alpha}(\partial\Omega)$ which takes $(\mu,\xi)$ to $v[\partial \Omega, \mu]_{|\partial\Omega}+\xi$ is a linear homeomorphism. 
\item[(iii)] Let $\mathbb{R}^n \setminus \mathrm{cl}\Omega$ be connected. Then the map from $C^{0,\alpha}(\partial \Omega)$ to $C^{0,\alpha}(\partial \Omega)$ which takes $\mu$ to the function 
\[
\frac{1}{2}\mu(x)+\int_{\partial \Omega}DS_n(x-y)\nu_{\Omega}(x)\mu(y)\, d\sigma_y
\]
of the variable $x \in \partial \Omega$, is a linear homeomorphism.
\end{enumerate}
\end{lemma}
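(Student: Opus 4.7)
The plan is to deduce the three statements from the classical theory of the simple layer potential on $C^{1,\alpha}$ domains, as developed in the references of Folland, Lanza de Cristoforis--Rossi, and Miranda cited in the text. All three items are standard, so the emphasis is on the structure rather than the analytic estimates.

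For part (i), the continuity of $\mu\mapsto v^+[\partial\Omega,\mu]$ from $C^{0,\alpha}(\partial\Omega)$ to $C^{1,\alpha}(\mathrm{cl}\Omega)$ is a standard consequence of Schauder estimates for the simple layer (Miranda~\cite[Thm.~5.I]{Mi65}): one uses that $v[\partial\Omega,\mu]$ is harmonic away from $\partial\Omega$, together with the $C^{0,\alpha}$ boundedness of the singular integral operators that arise when one differentiates the simple layer along $\partial\Omega$. The statement about the restriction of $v^-[\partial\Omega,\mu]$ to $\mathrm{cl}\tilde\Omega$ follows from the same interior estimates, since on $\mathrm{cl}\tilde\Omega$ the kernel $S_n(x-y)$ is smooth.

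For part (ii), the map $T(\mu,\xi):=v[\partial\Omega,\mu]_{|\partial\Omega}+\xi$ is bounded and linear by (i) and the continuity of the trace. For injectivity, if $T(\mu,\xi)=0$ then $v[\partial\Omega,\mu]+\xi$ is harmonic in $\Omega$ and vanishes on $\partial\Omega$, so by the maximum principle $v^+[\partial\Omega,\mu]\equiv -\xi$ on $\mathrm{cl}\Omega$, and in particular $\partial v^+/\partial\nu_\Omega=0$ on $\partial\Omega$. Since $\mu\in C^{0,\alpha}(\partial\Omega)_0$ one has $\int_{\partial\Omega}\mu\,d\sigma=0$, so $v[\partial\Omega,\mu]$ decays at infinity in both $n=2$ and $n\geq 3$. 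Then $v^-[\partial\Omega,\mu]$ solves the exterior Dirichlet problem with constant boundary datum $-\xi$ and decay at infinity, which forces $\xi=0$; one concludes $v\equiv 0$ on $\mathbb{R}^n$ and, via the standard jump formula $\partial v^-/\partial\nu-\partial v^+/\partial\nu=\mu$, that $\mu=0$. Surjectivity can be obtained either via the Fredholm alternative (after writing $T$ as a compact perturbation of a Riesz-type isomorphism onto $C^{1,\alpha}(\partial\Omega)$) or by solving the interior Dirichlet problem for an arbitrary $\phi\in C^{1,\alpha}(\partial\Omega)$ and recovering $\mu,\xi$ from the Neumann data on $\partial\Omega$ using the invertibility established in (iii).

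For part (iii), one identifies the operator $\mu\mapsto \tfrac{1}{2}\mu+\int_{\partial\Omega}DS_n(x-y)\nu_\Omega(x)\mu(y)\,d\sigma_y$ with the trace of $\partial v^-[\partial\Omega,\mu]/\partial\nu_\Omega$ on $\partial\Omega$ via the classical jump relations. The integral operator is compact on $C^{0,\alpha}(\partial\Omega)$, so the Fredholm alternative reduces the claim to injectivity. If the operator annihilates $\mu$, then $v^-[\partial\Omega,\mu]$ is harmonic in $\mathbb{R}^n\setminus\mathrm{cl}\Omega$ with vanishing Neumann data on $\partial\Omega$ and the standard behavior at infinity; the connectedness of $\mathbb{R}^n\setminus\mathrm{cl}\Omega$ and uniqueness of the exterior Neumann problem yield $v^-\equiv 0$. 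Continuity of the simple layer across $\partial\Omega$ then forces $v^+\equiv 0$ on $\mathrm{cl}\Omega$, and the jump relation delivers $\mu=0$. The main technical obstacle, and the only one that genuinely requires care, is the behavior at infinity in the two-dimensional case: the logarithmic singularity of $S_n$ means that decay of $v[\partial\Omega,\mu]$ depends on the vanishing of $\int_{\partial\Omega}\mu\,d\sigma$, which is precisely why the extra scalar $\xi$ and the subspace $C^{0,\alpha}(\partial\Omega)_0$ enter the formulation of (ii).
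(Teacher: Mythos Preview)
Your outline is correct and aligns with the paper's treatment: the paper does not prove this lemma but records it as a collection of well-known facts with citations to Folland, Lanza de Cristoforis--Rossi, and Miranda, which are exactly the sources you invoke. Your sketch of the Fredholm/jump-relation arguments for (ii) and (iii) is the standard one found in those references; the only minor caveat is that in (iii) for $n=2$ the exterior Neumann argument first yields $v^-\equiv\mathrm{const}$ rather than $v^-\equiv 0$ (one needs the preliminary observation $\int_{\partial\Omega}\mu\,d\sigma=0$, obtained by integrating the equation), but the jump relation then still gives $\mu=0$.
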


We now introduce and study an integral operator which we use in order to solve a mixed problem by means of simple layer potentials.

\begin{proposition}\label{prop:J}
Let $\alpha \in ]0,1[$.  Let $\mathcal{O}^i$, $\mathcal{O}^o$ be bounded open subsets of $\mathbb{R}^n$ of class $C^{1,\alpha}$ such that $\mathcal{O}^o$, $\mathbb{R}^n \setminus \mathrm{cl}\mathcal{O}^i$, and $\mathbb{R}^n \setminus \mathrm{cl}\mathcal{O}^o$ are connected and that $\mathrm{cl}\mathcal{O}^i \subseteq \mathcal{O}^o$. Let $J\equiv(J_1,J_2)$ be the operator from $C^{0,\alpha}(\partial \mathcal{O}^i)\times C^{0,\alpha}(\partial \mathcal{O}^o)_0\times \mathbb{R}$ to  $C^{0,\alpha}(\partial \mathcal{O}^i)\times C^{1,\alpha}(\partial \mathcal{O}^o)$ defined by
\[
\begin{split}
J_1[\mu_1,\mu_2,\xi](x)\equiv& \frac{1}{2}\mu_1(x)+\int_{\partial \mathcal{O}^i}DS_n(x-y)\nu_{\mathcal{O}^i}(x)\mu_1(y)\, d\sigma_y\\ &+\int_{\partial \mathcal{O}^o}DS_n(x-y)\nu_{\mathcal{O}^i}(x)\mu_2(y)\, d\sigma_y \qquad \forall x \in \partial \mathcal{O}^i\, ,\\
J_2[\mu_1,\mu_2,\xi](x)\equiv &\int_{\partial \mathcal{O}^i}S_n(x-y)\mu_1(y)\, d\sigma_y+\int_{\partial \mathcal{O}^o}S_n(x-y)\mu_2(y)\, d\sigma_y\\
&+\xi \qquad \forall x \in \partial \mathcal{O}^o\, ,
\end{split}
\]
for all $(\mu_1,\mu_2,\xi)\in C^{0,\alpha}(\partial \mathcal{O}^i)\times C^{0,\alpha}(\partial \mathcal{O}^o)_0\times \mathbb{R}$. Then $J$ is a linear homeomorphism.
\end{proposition}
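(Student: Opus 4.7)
The plan is to show that $J$ is a Fredholm operator of index $0$ and then to prove that $J$ is injective; surjectivity will then follow from the Fredholm property, and the open mapping theorem will give the homeomorphism. Linearity and continuity of $J$ are immediate from Lemma \ref{lem:smp}(i), together with the observation that the two ``cross'' integrals appearing in $J$ --- the integral on $\partial \mathcal{O}^o$ in $J_1$ (with $x \in \partial \mathcal{O}^i$) and the integral on $\partial \mathcal{O}^i$ in $J_2$ (with $x \in \partial \mathcal{O}^o$) --- have $C^\infty$ kernels, because the inclusion $\mathrm{cl}\mathcal{O}^i \subseteq \mathcal{O}^o$ forces $\mathrm{dist}(\partial \mathcal{O}^i, \partial \mathcal{O}^o) > 0$.

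Next I decompose $J = J_0 + K$ by dropping from $J$ exactly these two cross integrals. The operator $J_0$ is then block-diagonal: its first block is $\mu_1 \mapsto \tfrac{1}{2}\mu_1 + \int_{\partial \mathcal{O}^i} DS_n(x-y)\nu_{\mathcal{O}^i}(x)\mu_1(y)\, d\sigma_y$, a homeomorphism of $C^{0,\alpha}(\partial \mathcal{O}^i)$ by Lemma \ref{lem:smp}(iii) (applied to $\mathcal{O}^i$, whose exterior is connected by hypothesis); its second block is $(\mu_2,\xi) \mapsto v[\partial \mathcal{O}^o,\mu_2]_{|\partial \mathcal{O}^o} + \xi$, a linear homeomorphism from $C^{0,\alpha}(\partial \mathcal{O}^o)_0 \times \mathbb{R}$ onto $C^{1,\alpha}(\partial \mathcal{O}^o)$ by Lemma \ref{lem:smp}(ii) (applied to the connected set $\mathcal{O}^o$). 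Since $K = J - J_0$ has $C^\infty$ kernels and is therefore compact, $J$ is a compact perturbation of an isomorphism and hence Fredholm of index $0$; in particular, surjectivity of $J$ is equivalent to injectivity.

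To prove injectivity, I assume $J[\mu_1,\mu_2,\xi] = 0$ and consider
\[
v \equiv v[\partial \mathcal{O}^i,\mu_1] + v[\partial \mathcal{O}^o,\mu_2] + \xi \qquad \text{on } \mathbb{R}^n.
\]
Then $v$ is continuous on $\mathbb{R}^n$ and harmonic off $\partial \mathcal{O}^i \cup \partial \mathcal{O}^o$. By the jump formula for the normal derivative of a simple layer, the identity $J_1[\mu_1,\mu_2,\xi] = 0$ expresses the vanishing of $\partial v/\partial \nu_{\mathcal{O}^i}$ on $\partial \mathcal{O}^i$ when the limit is taken from $\mathcal{O}^o \setminus \mathrm{cl}\mathcal{O}^i$, while $J_2 = 0$ expresses $v = 0$ on $\partial \mathcal{O}^o$. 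Hence the restriction of $v$ to $\mathrm{cl}\mathcal{O}^o \setminus \mathcal{O}^i$ solves the homogeneous mixed problem \eqref{eq:uniq}, and Proposition \ref{prop:uniq} gives $v \equiv 0$ on $\mathrm{cl}\mathcal{O}^o \setminus \mathcal{O}^i$. Continuity of $v$ across $\partial \mathcal{O}^i$ together with the maximum principle applied in $\mathcal{O}^i$ extends this to $v \equiv 0$ on all of $\mathrm{cl}\mathcal{O}^o$. Since $v$ vanishes in an open neighborhood of $\partial \mathcal{O}^i$ from both sides, the jump of its normal derivative across $\partial \mathcal{O}^i$ is zero, and the jump formula for the simple layer on $\partial \mathcal{O}^i$ forces $\mu_1 = 0$.

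With $\mu_1 = 0$, I have $v = v[\partial \mathcal{O}^o,\mu_2] + \xi$, and the vanishing of $v$ on $\partial \mathcal{O}^o$ reads $v[\partial \mathcal{O}^o,\mu_2]_{|\partial \mathcal{O}^o} + \xi = 0$ in $C^{1,\alpha}(\partial \mathcal{O}^o)$. Injectivity of the map of Lemma \ref{lem:smp}(ii) then yields $\mu_2 = 0$ and $\xi = 0$, completing the proof. I expect the most delicate point to be the careful matching of the jump-formula conventions with the explicit form of $J_1$, so that $J_1 = 0$ really does deliver the Neumann trace from the annular side required by Proposition \ref{prop:uniq}; once this bookkeeping is in place, the rest of the argument is dictated by uniqueness and by the already-established isomorphism of Lemma \ref{lem:smp}(ii).
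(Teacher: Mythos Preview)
Your proof is correct and follows essentially the same approach as the paper: the same block-diagonal plus compact decomposition to obtain the Fredholm property, and the same injectivity argument via the homogeneous mixed problem, extension to $\mathcal{O}^i$, and the jump formula to recover $\mu_1=0$. The only cosmetic difference is that you invoke the maximum principle in $\mathcal{O}^i$ where the paper invokes uniqueness of the Dirichlet problem, which amounts to the same thing.
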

\begin{proof} We first prove that $J$ is a Fredholm operator of index $0$. Let $\hat{J}\equiv(\hat{J}_1,\hat{J}_2)$ be the operator from $C^{0,\alpha}(\partial \mathcal{O}^i)\times C^{0,\alpha}(\partial \mathcal{O}^o)_0\times \mathbb{R}$ to  $C^{0,\alpha}(\partial \mathcal{O}^i)\times C^{1,\alpha}(\partial \mathcal{O}^o)$ defined by
\[
\begin{split}
\hat{J}_1[\mu_1,\mu_2,\xi](x)\equiv& \frac{1}{2}\mu_1(x)+\int_{\partial \mathcal{O}^i}DS_n(x-y)\nu_{\mathcal{O}^i}(x)\mu_1(y)\, d\sigma_y \quad \forall x \in \partial \mathcal{O}^i\, ,\\
\hat{J}_2[\mu_1,\mu_2,\xi](x)\equiv &\int_{\partial \mathcal{O}^o}S_n(x-y)\mu_2(y)\, d\sigma_y+\xi \qquad \forall x \in \partial \mathcal{O}^o\, ,
\end{split}
\]
for all $(\mu_1,\mu_2,\xi)\in C^{0,\alpha}(\partial \mathcal{O}^i)\times C^{0,\alpha}(\partial \mathcal{O}^o)_0\times \mathbb{R}$. By Lemma \ref{lem:smp} (ii), (iii) one can show that $\hat{J}$ is a linear homeomorphism. Then let $\tilde{J}\equiv(\tilde{J}_1,\tilde{J}_2)$ be the operator from $C^{0,\alpha}(\partial \mathcal{O}^i)\times C^{0,\alpha}(\partial \mathcal{O}^o)_0\times \mathbb{R}$ to  $C^{0,\alpha}(\partial \mathcal{O}^i)\times C^{1,\alpha}(\partial \mathcal{O}^o)$ defined by
\[
\begin{split}
\tilde{J}_1[\mu_1,\mu_2,\xi](x)\equiv& \int_{\partial \mathcal{O}^o}DS_n(x-y)\nu_{\mathcal{O}^i}(x)\mu_2(y)\, d\sigma_y \quad \forall x \in \partial \mathcal{O}^i\, ,\\
\tilde{J}_2[\mu_1,\mu_2,\xi](x)\equiv &\int_{\partial \mathcal{O}^i}S_n(x-y)\mu_1(y)\, d\sigma_y \qquad \forall x \in \partial \mathcal{O}^o\, ,
\end{split}
\]
for all $(\mu_1,\mu_2,\xi)\in C^{0,\alpha}(\partial \mathcal{O}^i)\times C^{0,\alpha}(\partial \mathcal{O}^o)_0\times \mathbb{R}$. By classical potential theory and standard calculus in Schauder spaces, one can show that $\tilde{J}$ is a compact operator. Since $J=\hat{J}+\tilde{J}$,  we deduce that $J$ is a Fredholm operator of index $0$. As a consequence, in order to prove that $J$ is a linear homeormorphism, it suffices to show that it is injective. So let $(\mu_1,\mu_2,\xi)\in C^{0,\alpha}(\partial \mathcal{O}^i)\times C^{0,\alpha}(\partial \mathcal{O}^o)_0\times \mathbb{R}$ be such that $J[\mu_1,\mu_2,\xi]=(0,0)$. Then by classical potential theory, the function $v\equiv v[\partial \mathcal{O}^i,\mu_1]_{|\mathrm{cl}\mathcal{O}^o \setminus \mathcal{O}^i}+v[\partial \mathcal{O}^o,\mu_2]_{|\mathrm{cl}\mathcal{O}^o \setminus \mathcal{O}^i}+ \xi$ is a solution in $C^{1,\alpha}(\mathrm{cl}\mathcal{O}^o \setminus \mathcal{O}^i)$ of problem \eqref{eq:uniq}. Accordingly, $v[\partial \mathcal{O}^i,\mu_1]_{|\mathrm{cl}\mathcal{O}^o \setminus \mathcal{O}^i}+v[\partial \mathcal{O}^o,\mu_2]_{|\mathrm{cl}\mathcal{O}^o \setminus \mathcal{O}^i}+ \xi=0$ in $\mathrm{cl}\mathcal{O}^o \setminus \mathcal{O}^i$, and so
\begin{equation}\label{v-=-v+-xi}
v^-[\partial \mathcal{O}^i,\mu_1]=-v^+[\partial \mathcal{O}^o,\mu_2]- \xi\qquad\textrm{in $\mathrm{cl}\mathcal{O}^o \setminus \mathcal{O}^i$}\, .
\end{equation}
Also, $v[\partial \mathcal{O}^i,\mu_1]=-v[\partial \mathcal{O}^o,\mu_2]- \xi$ on $\partial \mathcal{O}^i$ and by uniqueness of the solution of the Dirichlet problem for the Laplace operator, we deduce
\begin{equation}\label{v+=-v+-xi}
v^+[\partial \mathcal{O}^i,\mu_1]=-v^+[\partial \mathcal{O}^o,\mu_2]- \xi\qquad\textrm{in $\mathrm{cl}\mathcal{O}^i$}\, .
\end{equation}
As a consequence, $v[\partial \mathcal{O}^i,\mu_1]=-v^+[\partial \mathcal{O}^o,\mu_2]- \xi$ on the whole of $\mathrm{cl} \mathcal{O}^o$. Since $v^+[\partial \mathcal{O}^o,\mu_2]$ is in $C^{1,\alpha}(\mathrm{cl} \mathcal{O}^o)$ (cf.~Lemma \ref{lem:smp}), we have
\begin{equation}\label{-dv++dv+=0}
-\frac{\partial}{\partial \nu_{\mathcal{O}^i}}v^+[\partial \mathcal{O}^o,\mu_2]_{|\mathrm{cl}\mathcal{O}^o \setminus \mathcal{O}^i}+\frac{\partial}{\partial \nu_{\mathcal{O}^i}}v^+[\partial \mathcal{O}^o,\mu_2]_{|\mathrm{cl}\mathcal{O}^i}=0\qquad\textrm{on $\partial\mathcal{O}^i$}\,.
\end{equation}
By equalities \eqref{v-=-v+-xi} and \eqref{v+=-v+-xi}, and by standard jump properties of the single layer potential, the expression on the left hand side of \eqref{-dv++dv+=0} equals
\begin{equation}\label{dv--dv+=mu}
\frac{\partial}{\partial \nu_{\mathcal{O}^i}}v^-[\partial \mathcal{O}^i,\mu_1]-\frac{\partial}{\partial \nu_{\mathcal{O}^i}}v^+[\partial \mathcal{O}^i,\mu_1]=\mu_1\qquad\textrm{on $\partial\mathcal{O}^i$}\, .
\end{equation} Hence, by \eqref{-dv++dv+=0} and \eqref{dv--dv+=mu} it follows that $\mu_1=0$. Thus $v[\partial \mathcal{O}^o,\mu_2]+ \xi=0$ on $\partial \mathcal{O}^o$ (cf.~\eqref{v-=-v+-xi}). Accordingly, Lemma \ref{lem:smp} (ii) implies that $(\mu_2,\xi)=(0,0)$, and so the proof is complete.
\qquad\end{proof}

By Propositions \ref{prop:uniq} and \ref{prop:J} and by the jump properties of the single layer potential, we deduce the validity of the following theorem on the solution of a mixed problem.
\begin{theorem}\label{thm:ex}
Let $\alpha \in ]0,1[$.  Let $\mathcal{O}^i$, $\mathcal{O}^o$ be bounded open subsets of $\mathbb{R}^n$ of class $C^{1,\alpha}$ such that $\mathcal{O}^o$, $\mathbb{R}^n \setminus \mathrm{cl}\mathcal{O}^i$, and $\mathbb{R}^n \setminus \mathrm{cl}\mathcal{O}^o$ are connected and that $\mathrm{cl}\mathcal{O}^i \subseteq \mathcal{O}^o$. Let $J$ be as in Proposition \ref{prop:J}. Let $(\phi,\gamma) \in C^{0,\alpha}(\partial \mathcal{O}^i)\times C^{1,\alpha}(\partial \mathcal{O}^o)$. Then problem 
\[
\left\{
\begin{array}{ll}
\Delta u(x)=0 & \forall x \in \mathcal{O}^o\setminus \mathrm{cl}\mathcal{O}^i\,,\\
\frac{\partial }{\partial \nu_{\mathcal{O}^i}}u(x)=\phi(x) & \forall x \in  \partial  \mathcal{O}^i\, ,\\
u(x)=\gamma(x) & \forall x \in \partial \mathcal{O}^o\, ,
\end{array}
\right.
\]
has a unique solution $u$ in $C^{1,\alpha}(\mathrm{cl}\mathcal{O}^o \setminus \mathcal{O}^i)$. The solution $u$ is delivered by
\[
u(x)\equiv v[\partial \mathcal{O}^i,\mu_1](x)+v[\partial \mathcal{O}^o,\mu_2](x)+\xi \qquad \forall x \in \mathrm{cl}\mathcal{O}^o \setminus \mathcal{O}^i\, , 
\]
where $(\mu_1,\mu_2,\xi)$ is the unique triple in $C^{0,\alpha}(\partial \mathcal{O}^i)\times C^{0,\alpha}(\partial \mathcal{O}^o)_0\times \mathbb{R}$ such that
\[
J[\mu_1,\mu_2,\xi]=(\phi,\gamma)\, .
\]
\end{theorem}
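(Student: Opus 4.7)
The plan is to combine Propositions \ref{prop:uniq} and \ref{prop:J} with the standard jump formulas for the normal derivative of a simple layer potential. Uniqueness in $C^{1,\alpha}(\mathrm{cl}\mathcal{O}^o\setminus\mathcal{O}^i)$ is immediate from Proposition \ref{prop:uniq}: if $u_1$, $u_2$ are two solutions, then $v\equiv u_1-u_2$ satisfies the homogeneous problem \eqref{eq:uniq} and hence vanishes. So the entire task reduces to verifying existence via the representation formula.

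For existence, I would first invoke Proposition \ref{prop:J} to obtain a (unique) triple $(\mu_1,\mu_2,\xi)\in C^{0,\alpha}(\partial \mathcal{O}^i)\times C^{0,\alpha}(\partial \mathcal{O}^o)_0\times \mathbb{R}$ with $J[\mu_1,\mu_2,\xi]=(\phi,\gamma)$, and then define
\[
u(x)\equiv v[\partial \mathcal{O}^i,\mu_1](x)+v[\partial \mathcal{O}^o,\mu_2](x)+\xi\qquad\forall x\in\mathrm{cl}\mathcal{O}^o\setminus\mathcal{O}^i.
\]
By Lemma \ref{lem:smp} (i), the restriction $u_{|\mathrm{cl}\mathcal{O}^o\setminus\mathcal{O}^i}$ belongs to $C^{1,\alpha}(\mathrm{cl}\mathcal{O}^o\setminus\mathcal{O}^i)$, because $v^-[\partial \mathcal{O}^i,\mu_1]$ restricted to $\mathrm{cl}\mathcal{O}^o\setminus\mathcal{O}^i$ lies in that space (being smooth up to $\partial\mathcal{O}^o$, which sits in the open complement of $\mathrm{cl}\mathcal{O}^i$, and $C^{1,\alpha}$ up to $\partial\mathcal{O}^i$ from outside), and symmetrically for $v^+[\partial \mathcal{O}^o,\mu_2]$. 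Harmonicity of $u$ in $\mathcal{O}^o\setminus\mathrm{cl}\mathcal{O}^i$ is standard since simple layer potentials are harmonic off the layer and constants are harmonic.

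It remains to check the two boundary conditions. On $\partial \mathcal{O}^o$, the definition gives $u(x)=v[\partial \mathcal{O}^i,\mu_1](x)+v[\partial \mathcal{O}^o,\mu_2](x)+\xi$, which is exactly $J_2[\mu_1,\mu_2,\xi](x)=\gamma(x)$. On $\partial \mathcal{O}^i$, the normal derivative of $v[\partial \mathcal{O}^o,\mu_2]$ is just the regular integral $\int_{\partial\mathcal{O}^o}DS_n(x-y)\nu_{\mathcal{O}^i}(x)\mu_2(y)\,d\sigma_y$ (since $\partial\mathcal{O}^i$ and $\partial\mathcal{O}^o$ are disjoint), while the exterior normal derivative of $v[\partial \mathcal{O}^i,\mu_1]$, computed as a limit from $\mathcal{O}^o\setminus\mathrm{cl}\mathcal{O}^i$, is given by the standard jump formula
\[
\tfrac{1}{2}\mu_1(x)+\int_{\partial\mathcal{O}^i}DS_n(x-y)\nu_{\mathcal{O}^i}(x)\mu_1(y)\,d\sigma_y.
\]
Adding the two contributions yields precisely $J_1[\mu_1,\mu_2,\xi](x)=\phi(x)$, which is the Neumann condition. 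This completes the verification. The only delicate point is keeping the sign conventions in the jump formula consistent with the fact that here $\mathcal{O}^i$ plays the role of the inclusion (so $\nu_{\mathcal{O}^i}$ points into the domain $\mathcal{O}^o\setminus\mathrm{cl}\mathcal{O}^i$ from $\partial\mathcal{O}^i$), which dictates the sign $+\tfrac{1}{2}\mu_1$ in $J_1$; no further obstacle appears.
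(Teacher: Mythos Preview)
Your proof is correct and follows precisely the route the paper indicates: the paper does not spell out a proof but merely states that the theorem follows from Propositions \ref{prop:uniq} and \ref{prop:J} together with the jump properties of the single layer potential, and you have simply written those steps out in detail.
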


\section{Formulation of  problem \eqref{bvprho} in terms of integral equations}\label{finteq}

In this section, we formulate problem \eqref{bvprho} in terms of integral equations on $\partial \Omega^i_1$, $\partial\Omega^i_2$, and $\partial \Omega^o$, by exploiting Theorem \ref{thm:ex}  and the rule of change of variables in integrals. Indeed, if $(\varrho_1, \varrho_2) \in \tilde{\mathcal{U}} \cap ]0,+\infty[^2$, by Theorem \ref{thm:ex}, one can convert  problem  \eqref{bvprho} into a system of integral equations which include an equation defined on $\partial \Omega^o$ and two equations defined on the $(\varrho_1,\varrho_2)$-dependent domains $\partial(\varrho_1p^1+\varrho_1 \varrho_2  \Omega^i_1)$ and  $\partial(\varrho_1 p^2+\varrho_1 \varrho_2  \Omega^i_2)$. Then, by exploiting an appropriate change of variable, one  can obtain an equivalent system of integral equations defined on the fixed domains $\partial \Omega^i_1$, $\partial \Omega^i_2$, and $\partial \Omega^o$. 

We find convenient to introduce the following notation. Let $\alpha\in]0,1[$. Let $\Omega^i_1$, $\Omega^i_2$, $\Omega^o$ be as in \eqref{dom}. Let $p^1$, $p^2$ be as in \eqref{p}. Let $r_\ast \in [0,+\infty[$. Let \eqref{assrast} hold. Let $f_1 \in C^{0,\alpha}(\partial \Omega^i_1)$, $f_2 \in C^{0,\alpha}(\partial \Omega^i_2)$,  $g \in C^{1,\alpha}(\partial \Omega^o)$.  Then we introduce the map $\Lambda=(\Lambda_1,\Lambda_2,\Lambda_3)$ from $\tilde{\mathcal{U}}\times C^{0,\alpha}(\partial \Omega^i_1)\times C^{0,\alpha}(\partial \Omega^i_2)\times C^{0,\alpha}(\partial \Omega^o)_0\times \mathbb{R}$ to $C^{0,\alpha}(\partial \Omega^i_1)\times C^{0,\alpha}(\partial \Omega^i_2)\times C^{1,\alpha}(\partial \Omega^o)$ defined by
\[
\begin{split}
\Lambda_1[\varrho_1,&\varrho_2,\theta^{i}_1,\theta^{i}_2,\theta^{o},\xi](t)\equiv\frac{1}{2}\theta^{i}_1(t)+\int_{\partial \Omega^i_1}DS_n(t-s)\nu_{\Omega^i_1}(t)\theta^{i}_1(s)\, d\sigma_s\\ 
& \quad +\varrho_2^{n-1}\int_{\partial \Omega^i_2}DS_n\bigg((p^1-p^2)+\varrho_2(t-s)\bigg)\nu_{\Omega^i_1}(t)\theta^{i}_2(s)\, d\sigma_s\\
& \quad +\int_{\partial \Omega^o}DS_n\big(\varrho_1p^1+\varrho_1\varrho_2 t-y\big)\nu_{\Omega^i_1}(t)\theta^{o}(y)\, d\sigma_y -f_1(t)\qquad \forall t \in \partial \Omega^i_1\, , \\
\Lambda_2[\varrho_1,&\varrho_2,\theta^{i}_1,\theta^{i}_2,\theta^{o},\xi](t)\equiv\frac{1}{2}\theta^{i}_2(t)+\int_{\partial \Omega^i_2}DS_n(t-s)\nu_{\Omega^i_2}(t)\theta^{i}_2(s)\, d\sigma_s\\ 
& \quad +\varrho_2^{n-1}\int_{\partial \Omega^i_1}DS_n\bigg((p^2-p^1)+\varrho_2(t-s)\bigg)\nu_{\Omega^i_2}(t)\theta^{i}_1(s)\, d\sigma_s\\
& \quad +\int_{\partial \Omega^o}DS_n\big(\varrho_1p^2+\varrho_1\varrho_2 t-y\big)\nu_{\Omega^i_2}(t)\theta^{o}(y)\, d\sigma_y -f_2(t)\qquad \forall t \in \partial \Omega^i_2\, ,\\
\Lambda_3[\varrho_1,&\varrho_2,\theta^{i}_1,\theta^{i}_2,\theta^{o},\xi](x)\equiv(\varrho_1 \varrho_2)^{n-1}\sum_{j=1}^2 \int_{\partial \Omega^i_j}S_n(x-\varrho_1p^j-\varrho_1 \varrho_2 s)\theta^{i}_j(s)\, d\sigma_s\\&+\int_{\partial \Omega^o}S_n(x-y)\theta^{o}(y)\, d\sigma_y +\xi-g(x) \qquad \forall x \in \partial \Omega^o\, ,
\end{split}
\]
for all $(\varrho_1,\varrho_2,\theta^{i}_1,\theta^{i}_2,\theta^{o},\xi) \in \tilde{\mathcal{U}}\times C^{0,\alpha}(\partial \Omega^i_1)\times C^{0,\alpha}(\partial \Omega^i_2)\times C^{0,\alpha}(\partial \Omega^o)_0\times \mathbb{R}$.

In the following proposition, we describe the link between the map $\Lambda$ and problem \eqref{bvprho}.

\begin{proposition}\label{prop:finteq}
Let $\alpha\in]0,1[$. Let $\Omega^i_1$, $\Omega^i_2$, $\Omega^o$ be as in \eqref{dom}. Let $p^1$, $p^2$ be as in \eqref{p}. Let $r_\ast \in [0,+\infty[$. Let \eqref{assrast} hold. Let $f_1 \in C^{0,\alpha}(\partial \Omega^i_1)$, $f_2 \in C^{0,\alpha}(\partial \Omega^i_2)$,  $g \in C^{1,\alpha}(\partial \Omega^o)$. Let $(\varrho_1, \varrho_2) \in \tilde{\mathcal{U}} \cap ]0,+\infty[^2$. Then the unique solution $u[\varrho_1,\varrho_2]$ in $C^{1,\alpha}(\mathrm{cl}\Omega(\varrho_1,\varrho_2))$ of problem \eqref{bvprho} is delivered by
\begin{equation}\label{eq:finteq1}
\begin{split}
u[\varrho_1,\varrho_2](x)\equiv &(\varrho_1 \varrho_2)^{n-1}\sum_{j=1}^2 \int_{\partial \Omega^i_j}S_n(x-\varrho_1p^j-\varrho_1\varrho_2 s)\theta^{i}_j[\varrho_1,\varrho_2](s)\, d\sigma_s \\& +\int_{\partial \Omega^o}S_n(x-y)\theta^{o}[\varrho_1,\varrho_2](y)\, d\sigma_y +\xi[\varrho_1,\varrho_2]\qquad \forall x \in \mathrm{cl}\Omega(\varrho_1,\varrho_2)\,,
\end{split}
\end{equation}
where $(\theta^{i}_1[\varrho_1,\varrho_2],\theta^{i}_2[\varrho_1,\varrho_2],\theta^{o}[\varrho_1,\varrho_2],\xi[\varrho_1,\varrho_2])$ is the unique quadruple $(\theta^{i}_1,\theta^{i}_2,\theta^{o},\xi)$ in $C^{0,\alpha}(\partial \Omega^i_1)\times C^{0,\alpha}(\partial \Omega^i_2) \times C^{0,\alpha}(\partial \Omega^o)_0\times \mathbb{R}$ such that
\begin{equation}\label{inteq2a}
\Lambda [\varrho_1,\varrho_2,\theta^{i}_1,\theta^{i}_2,\theta^{o},\xi]=0\, .
\end{equation}
\end{proposition}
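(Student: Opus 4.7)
The plan is to reduce to Theorem \ref{thm:ex} by viewing the union of the two perforations as a single inner domain, and then to convert that fixed-boundary integral equation system to the rescaled form \eqref{inteq2a} by a change of variables together with the homogeneity of $DS_n$.

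More precisely, I would set $\mathcal{O}^o\equiv\Omega^o$ and $\mathcal{O}^i\equiv\bigcup_{j=1}^2(\varrho_1 p^j+\varrho_1\varrho_2\Omega^i_j)$, which, thanks to \eqref{tildeU}, fulfils all the assumptions of Theorem \ref{thm:ex}: the two components of $\mathcal{O}^i$ are disjoint and each has connected exterior, so $\mathbb{R}^n\setminus\mathrm{cl}\mathcal{O}^i$ is connected as well, and $\mathrm{cl}\mathcal{O}^i\subseteq\mathcal{O}^o$. With $\phi$ defined piecewise on $\partial\mathcal{O}^i$ by $\phi(x)\equiv f_j((x-\varrho_1 p^j)/(\varrho_1\varrho_2))$ on $\partial(\varrho_1 p^j+\varrho_1\varrho_2\Omega^i_j)$ and with $\gamma\equiv g$, Theorem \ref{thm:ex} produces a unique triple $(\mu^i,\mu^o,\xi)\in C^{0,\alpha}(\partial\mathcal{O}^i)\times C^{0,\alpha}(\partial\mathcal{O}^o)_0\times\mathbb{R}$ with $J[\mu^i,\mu^o,\xi]=(\phi,g)$, together with the representation $u=v[\partial\mathcal{O}^i,\mu^i]+v[\partial\mathcal{O}^o,\mu^o]+\xi$ of the solution $u[\varrho_1,\varrho_2]$.

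Next, since $\partial\mathcal{O}^i$ is the disjoint union of two $C^{1,\alpha}$ hypersurfaces, $\mu^i$ restricts naturally to a pair of densities, and I rescale by setting
\[
\theta^{i}_j(s)\equiv\mu^i(\varrho_1 p^j+\varrho_1\varrho_2 s)\qquad\forall s\in\partial\Omega^i_j,\ j=1,2,\qquad \theta^o\equiv\mu^o.
\]
The change of variables $y=\varrho_1 p^j+\varrho_1\varrho_2 s$ in each single layer potential on $\partial(\varrho_1 p^j+\varrho_1\varrho_2\Omega^i_j)$ introduces the surface Jacobian $(\varrho_1\varrho_2)^{n-1}$ and yields at once the representation formula \eqref{eq:finteq1}. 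Performing the same change of variables in each of the three components of $J[\mu^i,\mu^o,\xi]=(\phi,g)$ and using the homogeneity identity $DS_n(\lambda x)=\lambda^{1-n}DS_n(x)$ for $\lambda>0$ then transforms the system into $\Lambda[\varrho_1,\varrho_2,\theta^{i}_1,\theta^{i}_2,\theta^o,\xi]=0$: on the self-interaction term on $\partial\Omega^i_j$ the Jacobian $(\varrho_1\varrho_2)^{n-1}$ cancels exactly with the $(\varrho_1\varrho_2)^{1-n}$ from homogeneity, recovering the first summand of $\Lambda_j$; on the cross-term between the two holes (when $l\neq j$) only the common factor $\varrho_1$ can be pulled out of $DS_n(\varrho_1(p^j-p^l)+\varrho_1\varrho_2(t-s))$, leaving $(p^j-p^l)+\varrho_2(t-s)$ inside the kernel and producing the residual prefactor $\varrho_2^{n-1}$; the contributions involving $\partial\Omega^o$ need no simplification since their arguments never degenerate under the rescaling, and yield the third summand of $\Lambda_j$ and the first part of $\Lambda_3$. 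The jump term $\tfrac12\theta^{i}_j$ arises because $\nu_{\varrho_1 p^j+\varrho_1\varrho_2\Omega^i_j}(\varrho_1 p^j+\varrho_1\varrho_2 t)=\nu_{\Omega^i_j}(t)$, so the $\tfrac12\mu^i$ jump of $J_1$ transfers directly to $\tfrac12\theta^{i}_j$.

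The only delicate piece of bookkeeping is the cross-term computation just sketched, which must yield \emph{precisely} the kernel $DS_n((p^j-p^l)+\varrho_2(t-s))$ and the prefactor $\varrho_2^{n-1}$ appearing in $\Lambda_1$ and $\Lambda_2$; these are the terms that will survive in the asymptotic analysis as $\varrho_1\to 0$, so keeping $\varrho_1$ separate from $\varrho_1\varrho_2$ when applying the homogeneity of $DS_n$ is the essential step. Once this verification is in place, uniqueness of $(\theta^{i}_1,\theta^{i}_2,\theta^o,\xi)$ is immediate: the pullback map $\mu^i\leftrightarrow(\theta^{i}_1,\theta^{i}_2)$ is a linear isomorphism between the corresponding Schauder spaces, so the uniqueness of $(\mu^i,\mu^o,\xi)$ given by Theorem \ref{thm:ex} transfers to that of $(\theta^{i}_1,\theta^{i}_2,\theta^o,\xi)$.
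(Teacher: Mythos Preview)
Your proposal is correct and follows essentially the same approach as the paper's proof: identify $\mathcal{O}^i$ with the union of the two rescaled holes, invoke Theorem~\ref{thm:ex}, and then pull back to $\partial\Omega^i_1$, $\partial\Omega^i_2$, $\partial\Omega^o$ via the change of variables $y=\varrho_1 p^j+\varrho_1\varrho_2 s$. In fact you spell out more of the computation than the paper does---in particular the use of the homogeneity $DS_n(\lambda x)=\lambda^{1-n}DS_n(x)$ to produce the factor $\varrho_2^{n-1}$ in the cross-terms---whereas the paper simply records the correspondence between $(\theta^i_1,\theta^i_2,\theta^o,\xi)$ and $(\mu_1,\mu_2,\xi)$ and appeals to ``the rule of change of variables in integrals.''
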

\begin{proof} Let $J$ be as in Proposition \ref{prop:J} with
\[
\mathcal{O}^i\equiv \bigg (\varrho_1p^1+\varrho_1\varrho_2 \Omega^i_1\bigg)\cup\bigg(\varrho_1p^2+\varrho_1 \varrho_2\Omega^i_2\bigg) \, , \qquad \mathcal{O}^o\equiv \Omega^o\, .
\]
Then by the definition of $\Lambda$ and the rule of change of variables in integrals one verifies that the quadruple $(\theta^{i}_1,\theta^{i}_2,\theta^{o},\xi)$ in $C^{0,\alpha}(\partial \Omega^i_1)\times C^{0,\alpha}(\partial \Omega^i_2) \times C^{0,\alpha}(\partial \Omega^o)_0\times \mathbb{R}$ is a solution of equation \eqref{inteq2a} if and only if the triple $(\mu_1,\mu_2,\xi)$ in $C^{0,\alpha}(\partial \mathcal{O}^i)\times C^{0,\alpha}(\partial \mathcal{O}^o)_0\times \mathbb{R}$ with $\mu_1$ and $\mu_2$ defined by
\[
\mu_1(x)\equiv
\left\{
\begin{array}{ll}
\theta^i_1\big((x-\varrho_1p^1)/(\varrho_1 \varrho_2)\big) & \forall x \in \varrho_1p^1+\varrho_1 \varrho_2 \partial \Omega_1\, ,   
\\
\theta^i_2\big((x-\varrho_1p^2)/(\varrho_1 \varrho_2)\big) & \forall x \in \varrho_1p^2+\varrho_1 \varrho_2 \partial \Omega_2\, , \end{array}
\right.
\]
\[
\mu_2(x)\equiv \theta^o(x)\qquad \forall x \in \partial \Omega^o\, ,
\]
is a solution of
\[
J[\mu_1,\mu_2,\xi]=(\phi,\gamma)\, ,
\]
with $\phi$ and $\gamma$ defined by
\[
\phi(x)\equiv
\left\{
\begin{array}{ll}
f_1\big((x-\varrho_1 p^1)/(\varrho_1 \varrho_2)\big) & \forall x \in \varrho_1 p^1+\varrho_1 \varrho_2 \partial \Omega_1\, ,   
\\
f_2\big((x-\varrho_1 p^2)/(\varrho_1 \varrho_2)\big) & \forall x \in \varrho_1 p^2+\varrho_1 \varrho_2 \partial \Omega_2\, , \end{array}
\right.
\]
\[
\gamma(x)\equiv g(x)\qquad \forall x \in \partial \Omega^o\, .
\]
Then the conclusion follows by Theorem \ref{thm:ex}. \qquad\end{proof}

By Proposition \ref{prop:finteq}, we are reduced to analyze equation \eqref{inteq2a} around the case $(\varrho_1,\varrho_2)=(0,r_\ast)$. As a first step, in the following lemma we analyze the system which we obtain by taking $(\varrho_1,\varrho_2)=(0,r_\ast)$ in equation \eqref{inteq2a}.

\begin{lemma}\label{lem:lim}
Let $\alpha\in]0,1[$. Let $\Omega^i_1$, $\Omega^i_2$, $\Omega^o$ be as in \eqref{dom}. Let $p^1$, $p^2$ be as in \eqref{p}. Let $r_\ast \in [0,+\infty[$ be such that \eqref{assrast} holds. Let $f_1 \in C^{0,\alpha}(\partial \Omega^i_1)$, $f_2 \in C^{0,\alpha}(\partial \Omega^i_2)$, $g \in C^{1,\alpha}(\partial \Omega^o)$. Then the system of  equations
\begin{equation}\label{inteqlim0}
\begin{split}
&\frac{1}{2}\theta^{i}_1(t)+\int_{\partial \Omega^i_1}DS_n(t-s)\nu_{\Omega^i_1}(t)\theta^{i}_1(s)\, d\sigma_s\\ 
& \quad +r_\ast^{n-1}\int_{\partial \Omega^i_2}DS_n\big((p^1-p^2)+r_\ast (t-s)\big)\nu_{\Omega^i_1}(t)\theta^{i}_2(s)\, d\sigma_s\\
& \quad -\int_{\partial \Omega^o}DS_n(y)\nu_{\Omega^i_1}(t)\theta^{o}(y)\, d\sigma_y -f_1(t)=0\qquad \forall t \in \partial \Omega^i_1\, ,
\end{split}
\end{equation}
\begin{equation}\label{inteqlim1}
\begin{split}
&\frac{1}{2}\theta^{i}_2(t)+\int_{\partial \Omega^i_2}DS_n(t-s)\nu_{\Omega^i_2}(t)\theta^{i}_2(s)\, d\sigma_s\\ 
& \quad +r_\ast^{n-1}\int_{\partial \Omega^i_1}DS_n\big((p^2-p^1)+r_\ast(t-s)\big)\nu_{\Omega^i_2}(t)\theta^{i}_1(s)\, d\sigma_s\\
& \quad -\int_{\partial \Omega^o}DS_n(y)\nu_{\Omega^i_2}(t)\theta^{o}(y)\, d\sigma_y -f_2(t)=0\qquad \forall t \in \partial \Omega^i_2\, ,
\end{split}
\end{equation}
\begin{equation}\label{inteqlim2}
\begin{split}
\int_{\partial \Omega^o}S_n(x-y)\theta^{o}(y)\, d\sigma_y +\xi-g(x)=0 \qquad \forall x \in \partial \Omega^o\, ,
\end{split}
\end{equation}
has a unique solution $(\theta^{i}_1, \theta^{i}_2, \theta^{o}, \xi)$ in $C^{0,\alpha}(\partial \Omega^i_1)\times C^{0,\alpha}(\partial \Omega^i_2)\times C^{0,\alpha}(\partial \Omega^o)_0\times \mathbb{R}$, which we denote by $(\tilde{\theta}^{i}_1, \tilde{\theta}^{i}_2, \tilde{\theta}^{o}, \tilde{\xi})$. 
\end{lemma}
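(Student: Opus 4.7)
My plan is to exploit the block-triangular structure of the system: equation \eqref{inteqlim2} involves only $(\theta^o,\xi)$, and in equations \eqref{inteqlim0}--\eqref{inteqlim1} the coupling with $\theta^o$ is additive (not mixed with $\theta^i_1$, $\theta^i_2$). Accordingly, I would first solve \eqref{inteqlim2} and then substitute the resulting $\theta^o$ into \eqref{inteqlim0}--\eqref{inteqlim1} to obtain an equivalent system for $(\theta^i_1,\theta^i_2)$ on the fixed inner boundaries.

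Equation \eqref{inteqlim2} coincides with the condition $v[\partial\Omega^o,\theta^o]_{|\partial\Omega^o}+\xi=g$, so Lemma \ref{lem:smp}(ii) produces a unique pair $(\tilde\theta^o,\tilde\xi)\in C^{0,\alpha}(\partial\Omega^o)_0\times\mathbb{R}$. Inserting $\tilde\theta^o$ into the two remaining equations gives known right-hand sides $F_j\in C^{0,\alpha}(\partial\Omega^i_j)$ and the coupled system
\begin{equation*}
\tfrac{1}{2}\theta^i_j+K_j\theta^i_j+r_\ast^{n-1}K_{jk}\theta^i_k=F_j,\qquad j\ne k\in\{1,2\},
\end{equation*}
where $K_j$ is the Neumann--Poincar\'e type operator of Lemma \ref{lem:smp}(iii) on $\partial\Omega^i_j$, and $K_{jk}$ is the integral operator with kernel $DS_n\bigl((p^j-p^k)+r_\ast(t-s)\bigr)\,\nu_{\Omega^i_j}(t)$, which is smooth because $p^1\ne p^2$.

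If $r_\ast=0$, then $r_\ast^{n-1}=0$ (since $n\ge 2$), the system decouples, and each equation $\tfrac{1}{2}\theta^i_j+K_j\theta^i_j=F_j$ has a unique solution by Lemma \ref{lem:smp}(iii). If $r_\ast>0$, I would introduce
\begin{equation*}
\mathcal{O}^i\equiv\bigl(p^1+r_\ast\Omega^i_1\bigr)\cup\bigl(p^2+r_\ast\Omega^i_2\bigr),
\end{equation*}
which by \eqref{assrast} is a disjoint union of two bounded open sets of class $C^{1,\alpha}$ whose complement in $\mathbb{R}^n$ is connected. Using the change of variables $y=p^j+r_\ast s$ (so $d\sigma_y=r_\ast^{n-1}\,d\sigma_s$) together with the homogeneity $DS_n(r_\ast z)=r_\ast^{1-n}DS_n(z)$, one verifies that $(\theta^i_1,\theta^i_2)$ solves the coupled system if and only if the function $\mu\in C^{0,\alpha}(\partial\mathcal{O}^i)$ defined by $\mu(p^j+r_\ast s)\equiv\theta^i_j(s)$ satisfies the single integral equation $\tfrac{1}{2}\mu+K_{\mathcal{O}^i}\mu=\tilde F$ on $\partial\mathcal{O}^i$ for an appropriate $\tilde F\in C^{0,\alpha}(\partial\mathcal{O}^i)$. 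Lemma \ref{lem:smp}(iii) applied with $\Omega=\mathcal{O}^i$ then yields unique solvability of the reduced system, and hence of the original one.

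The main obstacle is bookkeeping in the change of variables when $r_\ast>0$: one must check that the diagonal contributions reproduce $\tfrac{1}{2}\mu+K_{\mathcal{O}^i}\mu$ restricted to a single component of $\partial\mathcal{O}^i$ (the factor $r_\ast^{n-1}$ from the Jacobian cancelling $r_\ast^{1-n}$ from the homogeneity of $DS_n$), while the off-diagonal kernel $DS_n\bigl((p^1-p^2)+r_\ast(t-s)\bigr)$, which does not simplify under scaling, produces exactly the cross-boundary contribution in $K_{\mathcal{O}^i}$. Once this identification is made, Lemma \ref{lem:smp}(iii) delivers both existence and uniqueness in one stroke.
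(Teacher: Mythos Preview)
Your proposal is correct and follows essentially the same approach as the paper's proof: first solve \eqref{inteqlim2} for $(\tilde\theta^o,\tilde\xi)$ via Lemma~\ref{lem:smp}(ii), then reduce to the coupled system in $(\theta^i_1,\theta^i_2)$, treating the cases $r_\ast=0$ (decoupling) and $r_\ast>0$ (change of variables to the union $(p^1+r_\ast\Omega^i_1)\cup(p^2+r_\ast\Omega^i_2)$ and Lemma~\ref{lem:smp}(iii)) separately. One small wording quibble: the off-diagonal kernel does simplify nicely under the substitution $y=p^k+r_\ast s$, since $(p^j-p^k)+r_\ast(t-s)=x-y$ exactly, and it is the prefactor $r_\ast^{n-1}$ that cancels the Jacobian $r_\ast^{1-n}$---but your conclusion is right and matches the paper's computation.
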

\begin{proof} By Lemma \ref{lem:smp} (ii), equation \eqref{inteqlim2} has a unique solution $(\tilde{\theta}^{o}, \tilde{\xi})$ in the space $C^{0,\alpha}(\partial \Omega^o)_0\times \mathbb{R}$. Then we consider equations \eqref{inteqlim0}, \eqref{inteqlim1} and we introduce the operator $M_{r_\ast}\equiv(M_{r_\ast,1},M_{r_\ast,2})$ from $C^{0,\alpha}(\partial \Omega^i_1)\times C^{0,\alpha}(\partial \Omega^i_2)$ to itself by setting 
\[
\begin{split}
M_{r_\ast,1}&[\theta^{i}_1,\theta^{i}_2](t)\equiv \frac{1}{2}\theta^{i}_1(t)+\int_{\partial \Omega^i_1}DS_n(t-s)\nu_{\Omega^i_1}(t)\theta^{i}_1(s)\, d\sigma_s\\ 
&  +r_\ast^{n-1}\int_{\partial \Omega^i_2}DS_n\big((p^1-p^2)+r_\ast (t-s)\big)\nu_{\Omega^i_1}(t)\theta^{i}_2(s)\, d\sigma_s \qquad \forall t \in \partial \Omega^i_1\, ,
\end{split}
\]
\[
\begin{split}
M_{r_\ast,2}&[\theta^{i}_1,\theta^{i}_2](t)\equiv \frac{1}{2}\theta^{i}_2(t)+\int_{\partial \Omega^i_2}DS_n(t-s)\nu_{\Omega^i_2}(t)\theta^{i}_2(s)\, d\sigma_s\\ 
&  +r_\ast^{n-1}\int_{\partial \Omega^i_1}DS_n\big((p^2-p^1)+r_\ast(t-s)\big)\nu_{\Omega^i_2}(t)\theta^{i}_1(s)\, d\sigma_s \qquad \forall t \in \partial \Omega^i_2\, ,
\end{split}
\]
for all $(\theta^i_1,\theta^i_2) \in C^{0,\alpha}(\partial \Omega^i_1)\times C^{0,\alpha}(\partial \Omega^i_2)$.
We need to show that there exists a unique pair $(\theta^i_1,\theta^i_2)$ such that
\[
\begin{split}
M_{r_\ast,1}&[\theta^{i}_1,\theta^{i}_2](t) =\int_{\partial \Omega^o}DS_n(y)\nu_{\Omega^i_1}(t)\tilde{\theta}^{o}(y)\, d\sigma_y +f_1(t)\qquad \forall t \in \partial \Omega^i_1\, ,
\end{split}
\]
\[
\begin{split}
M_{r_\ast,2}&[\theta^{i}_1,\theta^{i}_2](t) =\int_{\partial \Omega^o}DS_n(y)\nu_{\Omega^i_2}(t)\tilde{\theta}^{o}(y)\, d\sigma_y +f_2(t)\qquad \forall t \in \partial \Omega^i_2\, .
\end{split}
\]
In order to do so, it clearly suffices to show that the operator $M_{r_\ast}$ is invertible. If $r_\ast=0$, the invertibility follows immediately by Lemma \ref{lem:smp} (iii). If $r_\ast > 0$, we note that
\[
\begin{split}
M_{r_\ast,1}&[\theta^{i}_1,\theta^{i}_2]\big((x-p^1)/r_{\ast}\big)= \frac{1}{2}\theta^{i}_1\big((x-p^1)/r_{\ast}\big)\\\ +&\int_{\partial (p^1+r_\ast \Omega^i_1)}DS_n(x-y)\nu_{p^1+r_\ast \Omega^i_1}(x)\theta^{i}_1\big((y-p^1)/r_{\ast}\big)\, d\sigma_y\\ 
 +&\int_{\partial (p^2+r_\ast\Omega^i_2)}DS_n(x-y)\nu_{p^1+r_\ast \Omega^i_1}(x)\theta^{i}_2\big((y-p^2)/r_{\ast}\big)\, d\sigma_y \\ 
 & \qquad \qquad \qquad \qquad \qquad  \qquad \qquad \qquad \forall x \in \partial (p^1+r_\ast \Omega^i_1)\, ,
\end{split}
\]
\[
\begin{split}
M_{r_\ast,2}&[\theta^{i}_1,\theta^{i}_2]\big((x-p^2)/r_{\ast}\big)= \frac{1}{2}\theta^{i}_2\big((x-p^2)/r_{\ast}\big)\\\ +&\int_{\partial (p^2+r_\ast \Omega^i_2)}DS_n(x-y)\nu_{p^2+r_\ast \Omega^i_2}(x)\theta^{i}_2\big((y-p^2)/r_{\ast}\big)\, d\sigma_y\\ 
 +&\int_{\partial (p^1+r_\ast\Omega^i_1)}DS_n(x-y)\nu_{p^2+r_\ast \Omega^i_2}(x)\theta^{i}_1\big((y-p^1)/r_{\ast}\big)\, d\sigma_y \\ 
 & \qquad \qquad \qquad \qquad \qquad  \qquad \qquad \qquad \forall x \in \partial (p^2+r_\ast \Omega^i_2)\, .
\end{split}
\]
As a consequence, the invertibility of $M_{r_\ast}$ follows by Lemma \ref{lem:smp} (iii) with $\Omega\equiv(p^1+r_{\ast}\Omega_1)\cup (p^2+r_{\ast}\Omega_2)$. \qquad\end{proof}
\begin{remark}\label{rem:lim}
Let the assumptions of Lemma \ref{lem:lim} hold. Let $\tilde{u}$ be the unique solution in $C^{1,\alpha}(\mathrm{cl}\Omega^o)$ of 
\[
\left\{
\begin{array}{ll}
\Delta u(x)=0 & \forall x \in \Omega^o\,,\\
u(x)=g(x) & \forall x \in \partial \Omega^o\, .
\end{array}
\right.
\]
Then $\tilde{u}= v^+[\partial \Omega^o,\tilde{\theta}^{o}]+\tilde{\xi}$.
\end{remark}

We are now ready to analyze  equation \eqref{inteq2a} around the degenerate pair $(\varrho_1, \varrho_2)=(0, r_\ast)$.

\begin{proposition}\label{prop:ansol}
Let $\alpha\in]0,1[$. Let $\Omega^i_1$, $\Omega^i_2$, $\Omega^o$ be as in \eqref{dom}. Let $p^1$, $p^2$ be as in \eqref{p}. Let $r_\ast \in [0,+\infty[$. Let \eqref{assrast} hold. Let $\tilde{\mathcal{U}}$ be as in \eqref{tildeU}. Let $f_1 \in C^{0,\alpha}(\partial \Omega^i_1)$, $f_2 \in C^{0,\alpha}(\partial \Omega^i_2)$,  $g \in C^{1,\alpha}(\partial \Omega^o)$. Let $(\tilde{\theta}^{i}_1,\tilde{\theta}^{i}_2, \tilde{\theta}^{o}, \tilde{\xi})$ be as in Lemma \ref{lem:lim}. Then there exist an open neighborhood $\mathcal{U}$ of $(0,r_{\ast})$ in $\mathbb{R}^2$ and a real analytic map $(\Theta^{i}_1,\Theta^{i}_2, \Theta^{o}, \Xi)$ from $\mathcal{U}$ to $C^{0,\alpha}(\partial \Omega^i_1)\times C^{0,\alpha}(\partial \Omega^i_2)\times C^{0,\alpha}(\partial \Omega^o)_0\times \mathbb{R}$ such that
\[
 \mathcal{U} \subseteq \tilde{\mathcal{U}}\, ,
\]
and that
\[
\begin{split}
(\theta^{i}_1[\varrho_1,\varrho_2],\theta^{i}_2[\varrho_1,\varrho_2], \theta^{o}[\varrho_1,\varrho_2], \xi[\varrho_1,\varrho_2])=(\Theta^{i}_1[\varrho_1,\varrho_2],\Theta^{i}_2&[ \varrho_1,\varrho_2], \Theta^{o}[ \varrho_1,\varrho_2], \Xi[ \varrho_1,\varrho_2])\\ &  \forall (\varrho_1,\varrho_2) \in \mathcal{U} \cap ]0,+\infty[^2\, ,
\end{split}
\]
and that
\[
(\tilde{\theta}^{i}_1,\tilde{\theta}^{i}_2, \tilde{\theta}^{o}, \tilde{\xi})=(\Theta^{i}_1[0,r_\ast], \Theta^{i}_2[0,r_\ast], \Theta^{o}[0,r_\ast], \Xi[0,r_\ast])\, .
\]
\end{proposition}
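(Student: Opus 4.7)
The plan is to apply the real analytic Implicit Function Theorem (cf.\ Deimling \cite{De85}) to the map $\Lambda$ at the point $(0,r_\ast,\tilde{\theta}^{i}_1,\tilde{\theta}^{i}_2,\tilde{\theta}^{o},\tilde{\xi})$. Three items need to be verified: that $\Lambda$ vanishes at this base point; that $\Lambda$ is real analytic in a full neighborhood of $(0,r_\ast)$ in $\mathbb{R}^2$ (not just in the positive quadrant); and that the partial Fr\'echet differential of $\Lambda$ with respect to the variables $(\theta^{i}_1,\theta^{i}_2,\theta^{o},\xi)$ at the base point is a linear homeomorphism onto $C^{0,\alpha}(\partial \Omega^i_1)\times C^{0,\alpha}(\partial \Omega^i_2)\times C^{1,\alpha}(\partial \Omega^o)$. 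Once all three are checked, the Implicit Function Theorem produces the neighborhood $\mathcal{U}$ and the analytic map $(\Theta^i_1,\Theta^i_2,\Theta^o,\Xi)$; the identification with $(\theta^{i}_1[\varrho_1,\varrho_2],\dots,\xi[\varrho_1,\varrho_2])$ on $\mathcal{U}\cap\,]0,+\infty[^2$ follows from the uniqueness statement in Proposition \ref{prop:finteq}, and the identification at $(0,r_\ast)$ follows from Lemma \ref{lem:lim}.

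For the vanishing at the base point, I would set $\varrho_1=0$ in the definition of $\Lambda$ and use the elementary identity $DS_n(-y)=-DS_n(y)$, so that $\Lambda_1[0,r_\ast,\cdot]$ and $\Lambda_2[0,r_\ast,\cdot]$ reduce exactly to the left-hand sides of \eqref{inteqlim0} and \eqref{inteqlim1}. The prefactor $(\varrho_1\varrho_2)^{n-1}$ in $\Lambda_3$ vanishes when $\varrho_1=0$ (recall $n\ge 2$), so $\Lambda_3[0,r_\ast,\cdot]$ coincides with the left-hand side of \eqref{inteqlim2}. Hence Lemma \ref{lem:lim} yields $\Lambda[0,r_\ast,\tilde{\theta}^i_1,\tilde{\theta}^i_2,\tilde{\theta}^o,\tilde{\xi}]=0$.

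For real analyticity, I would split the contributions. The singular kernel terms $\int_{\partial\Omega^i_j}DS_n(t-s)\nu_{\Omega^i_j}(t)\theta^i_j(s)\,d\sigma_s$ and $\int_{\partial\Omega^o}S_n(x-y)\theta^o(y)\,d\sigma_y$ are independent of $(\varrho_1,\varrho_2)$ and define continuous linear operators by classical potential theory (Lemma \ref{lem:smp}). The remaining kernels have arguments of the form $(p^1-p^2)+\varrho_2(t-s)$, $\varrho_1 p^j+\varrho_1\varrho_2 t-y$, or $x-\varrho_1 p^j-\varrho_1\varrho_2 s$; after possibly shrinking $\tilde{\mathcal{U}}$, each of these arguments stays bounded away from $0$ uniformly in the integration variables, thanks to \eqref{assrast} and \eqref{tildeU} together with the fact that $0\in\Omega^o$. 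Since $S_n$ and $DS_n$ are real analytic off the origin, a standard argument on analyticity of integral operators with analytic non-singular kernels (cf.\ Lanza de Cristoforis and Rossi \cite{LaRo04}) applies; combined with analyticity of the polynomial prefactors $\varrho_2^{n-1}$ and $(\varrho_1\varrho_2)^{n-1}$ and the continuity of the pointwise product in Schauder spaces, this shows $\Lambda$ is real analytic on a neighborhood of $(0,r_\ast,\tilde{\theta}^i_1,\tilde{\theta}^i_2,\tilde{\theta}^o,\tilde{\xi})$.

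The main obstacle is the invertibility of the partial differential. Since $\Lambda$ is affine in $(\theta^{i}_1,\theta^{i}_2,\theta^{o},\xi)$, the partial differential at the base point is just $\Lambda$ with the $f_1,f_2,g$ terms removed and $(\varrho_1,\varrho_2)=(0,r_\ast)$. Inspecting this linearisation, the third component reduces to the map $(\theta^o,\xi)\mapsto v[\partial\Omega^o,\theta^o]_{|\partial\Omega^o}+\xi$, a linear homeomorphism from $C^{0,\alpha}(\partial\Omega^o)_0\times\mathbb{R}$ onto $C^{1,\alpha}(\partial\Omega^o)$ by Lemma \ref{lem:smp}(ii); while the first two components, once $(\theta^o,\xi)$ has been determined, reduce to the equation $M_{r_\ast}[\theta^{i}_1,\theta^{i}_2]=(\text{data linear in }\theta^o)+(\text{data})$, whose unique solvability is precisely the invertibility of $M_{r_\ast}$ established in the proof of Lemma \ref{lem:lim}. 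This block-triangular structure makes the partial differential bijective; boundedness is clear, and the Open Mapping Theorem supplies bicontinuity. The real analytic Implicit Function Theorem then delivers $\mathcal{U}$ and $(\Theta^i_1,\Theta^i_2,\Theta^o,\Xi)$ with all the required properties.
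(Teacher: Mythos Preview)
Your proposal is correct and follows essentially the same approach as the paper: verify that $\Lambda$ vanishes at the base point (which is the content of Lemma~\ref{lem:lim}), establish real analyticity of $\Lambda$ via the standard results on integral operators with smooth non-singular kernels plus the classical mapping properties of layer potentials, and then show that the partial differential $\partial_{(\theta^{i}_1,\theta^{i}_2,\theta^{o},\xi)}\Lambda[0,r_\ast,\cdot]$ is a linear homeomorphism by exploiting its block-triangular structure (the third component depends only on $(\bar\theta^o,\bar\xi)$ and is handled by Lemma~\ref{lem:smp}(ii), while the first two reduce to the operator $M_{r_\ast}$ from the proof of Lemma~\ref{lem:lim}). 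The paper argues identically, writing out the differential explicitly and then referring back to the proof of Lemma~\ref{lem:lim} for its invertibility; your explicit mention of block-triangularity and the Open Mapping Theorem just makes that step a bit more transparent.
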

\begin{proof}  By standard properties of integral  operators with real analytic kernels and with no singularity, and by classical mapping properties of layer potentials  (cf.~Miranda~\cite{Mi65}, Lanza de Cristoforis and Rossi \cite[Thm.~3.1]{LaRo04}, Lanza de Cristoforis and the second-named author \cite[\S 4]{LaMu13}), we conclude that $\Lambda$ is real analytic. Now we plan to apply the Implicit Function Theorem to equation $\Lambda[\varrho_1,\varrho_2,\theta^{i}_1,\theta^{i}_2,\theta^{o},\xi]=0$ around the point $(0, r_\ast,\tilde{\theta}^{i}_1,\tilde{\theta}^{i}_2,\tilde{\theta}^{o},\tilde{\xi})$. By definition of $(\tilde{\theta}^{i}_1,\tilde{\theta}^{i}_2,\tilde{\theta}^{o},\tilde{\xi})$, we have $\Lambda[0, r_\ast, \tilde{\theta}^{i}_1,\tilde{\theta}^{i}_2,\tilde{\theta}^{o},\tilde{\xi}]=0$. By standard calculus in Banach spaces, the differential of $\Lambda$ at $(0, r_\ast, \tilde{\theta}^{i}_1,\tilde{\theta}^{i}_2,\tilde{\theta}^{o},\tilde{\xi})$ with respect to the variables $(\theta^{i}_1,\theta^{i}_2,\theta^{o},\xi)$ is delivered by the formulas
\[
\begin{split}
\partial_{(\theta^{i}_1, \theta^{i}_2,\theta^{o},\xi)}\Lambda_1[0,r_\ast,\tilde{\theta}^{i}_1,\tilde{\theta}^{i}_2,\tilde{\theta}^{o},&\tilde{\xi}](\bar{\theta}^{i}_1,\bar{\theta}^{i}_2,\bar{\theta}^{o},\bar{\xi})(t)\\
&\equiv\frac{1}{2}\bar{\theta}^{i}_1(t)+\int_{\partial \Omega^i_1}DS_n(t-s)\nu_{\Omega^i_1}(t)\bar{\theta}^{i}_1(s)\, d\sigma_s\\ 
&  +r_\ast^{n-1}\int_{\partial \Omega^i_2}DS_n\big((p^1-p^2)+r_\ast (t-s)\big)\nu_{\Omega^i_1}(t)\bar{\theta}^{i}_2(s)\, d\sigma_s\\
&  -\int_{\partial \Omega^o}DS_n(y)\nu_{\Omega^i_1}(t)\bar{\theta}^{o}(y)\, d\sigma_y \qquad \forall t \in \partial \Omega^i_1\, ,
\end{split}
\]
\[
\begin{split}
\partial_{(\theta^{i}_1, \theta^{i}_2,\theta^{o},\xi)}\Lambda_2[0,r_\ast,\tilde{\theta}^{i}_1,\tilde{\theta}^{i}_2,\tilde{\theta}^{o},&\tilde{\xi}](\bar{\theta}^{i}_1,\bar{\theta}^{i}_2,\bar{\theta}^{o},\bar{\xi})(t)\\&\equiv\frac{1}{2}\bar{\theta}^{i}_2(t)+\int_{\partial \Omega^i_2}DS_n(t-s)\nu_{\Omega^i_2}(t)\bar{\theta}^{i}_2(s)\, d\sigma_s\\ 
&  +r_\ast^{n-1}\int_{\partial \Omega^i_1}DS_n\big((p^2-p^1)+r_\ast(t-s)\big)\nu_{\Omega^i_2}(t)\bar{\theta}^{i}_1(s)\, d\sigma_s\\
&  -\int_{\partial \Omega^o}DS_n(y)\nu_{\Omega^i_2}(t)\bar{\theta}^{o}(y)\, d\sigma_y\qquad \forall t \in \partial \Omega^i_2\, ,
\end{split}
\]
\[
\begin{split}
\partial_{(\theta^{i}_1, \theta^{i}_2,\theta^{o},\xi)}\Lambda_3[0,r_\ast,\tilde{\theta}^{i}_1,\tilde{\theta}^{i}_2,\tilde{\theta}^{o},\tilde{\xi}](\bar{\theta}^{i}_1,\bar{\theta}^{i}_2,\bar{\theta}^{o},\bar{\xi})(x)\equiv\int_{\partial \Omega^o}S_n(x-y)\bar{\theta}^{o}(y)\, d\sigma_y &+\bar{\xi}\\ &\forall x \in \partial \Omega^o\, ,
\end{split}
\]
for all $(\bar{\theta}^{i}_1,\bar{\theta}^{i}_2,\bar{\theta}^{o},\bar{\xi}) \in C^{0,\alpha}(\partial \Omega^i_1)\times C^{0,\alpha}(\partial \Omega^i_2)\times C^{0,\alpha}(\partial \Omega^o)_0\times \mathbb{R}$. Then, by arguing as in the proof of Lemma \ref{lem:lim}, by classical potential theory, and  by standard calculus in Banach spaces, one can show that $\partial_{(\theta^{i}_1,\theta^{i}_2,\theta^{o},\xi)}\Lambda[0,r_\ast,\tilde{\theta}^{i}_1,\tilde{\theta}^{i}_2,\tilde{\theta}^{o},\tilde{\xi}]$ is a linear homeomorphism from $ C^{0,\alpha}(\partial \Omega^i_1)\times C^{0,\alpha}(\partial \Omega^i_2)\times C^{0,\alpha}(\partial \Omega^o)_0\times \mathbb{R}$ onto $ C^{0,\alpha}(\partial \Omega^i_1)\times C^{0,\alpha}(\partial \Omega^i_2)\times C^{1,\alpha}(\partial \Omega^o)$. Then by the Implicit Function Theorem for real analytic maps in Banach spaces (cf., \textit{e.g.}, Deimling \cite[Theorem 15.3]{De85}), there exist an open neighborhood $\mathcal{U} \subseteq \tilde{\mathcal{U}}$ of $(0,r_{\ast})$ in $\mathbb{R}^2$ and a real analytic map $(\Theta^{i}_1,\Theta^{i}_2, \Theta^{o}, \Xi)$ from $\mathcal{U}$ to $C^{0,\alpha}(\partial \Omega^i_1)\times C^{0,\alpha}(\partial \Omega^i_2)\times C^{0,\alpha}(\partial \Omega^o)_0\times \mathbb{R}$ such that
\begin{equation}\label{eq:ansol}
\Lambda\bigl[\varrho_1, \varrho_2, \Theta^{i}_1[\varrho_1,\varrho_2],\Theta^{i}_2[\varrho_1,\varrho_2], \Theta^{o}[\varrho_1,\varrho_2], \Xi[\varrho_1,\varrho_2]\bigr]=0 \qquad \forall (\varrho_1,\varrho_2) \in  \mathcal{U}\, .
\end{equation}
In particular, by Proposition \ref{prop:finteq} and Lemma \ref{lem:lim}, we have
\[
\begin{split}
(\theta^{i}_1[\varrho_1,\varrho_2],\theta^{i}_2[\varrho_1,\varrho_2], \theta^{o}[\varrho_1,\varrho_2], \xi[\varrho_1,\varrho_2])=(\Theta^{i}_1[\varrho_1,\varrho_2],\Theta^{i}_2[ &\varrho_1,\varrho_2], \Theta^{o}[ \varrho_1,\varrho_2], \Xi[ \varrho_1,\varrho_2])\\ &  \forall (\varrho_1,\varrho_2) \in \mathcal{U} \cap ]0,+\infty[^2\, ,
\end{split}
\]
and
\[
(\tilde{\theta}^{i}_1,\tilde{\theta}^{i}_2, \tilde{\theta}^{o}, \tilde{\xi})=(\Theta^{i}_1[0,r_\ast], \Theta^{i}_2[0,r_\ast], \Theta^{o}[0,r_\ast], \Xi[0,r_\ast])\, ,
\]
and thus the proof is complete. \qquad\end{proof}

\section{A functional analytic representation theorem for the solution of problem  \eqref{bvpe}}
\label{fure}

In the following theorem, we  exploit the analyticity result for the solutions of  equation \eqref{inteq2a} in order to prove representation formulas for $u[\varrho_1,\varrho_2]$ in terms of real analytic maps. Then, by the analysis of the behavior of $u[\varrho_1,\varrho_2]$ for $(\varrho_1,\varrho_2)$ close to the degenerate value $(0,r_\ast)$, we will be able to answer questions (i), (ii), (iii) asked in the introduction and concerning the behavior of the solution $u_\epsilon$ of problem \eqref{bvpe}.  
\begin{theorem}\label{thm:rep}
Let $\alpha\in]0,1[$. Let $\Omega^i_1$, $\Omega^i_2$, $\Omega^o$ be as in \eqref{dom}. Let $p^1$, $p^2$ be as in \eqref{p}. Let $r_\ast \in [0,+\infty[$. Let \eqref{assrast} hold. Let $f_1 \in C^{0,\alpha}(\partial \Omega^i_1)$, $f_2 \in C^{0,\alpha}(\partial \Omega^i_2)$,  $g \in C^{1,\alpha}(\partial \Omega^o)$. Let $\tilde{u}$ be as in Remark \ref{rem:lim}. Let $\mathcal{U}$ be as in Proposition \ref{prop:ansol}. Then the following statements hold.
\begin{enumerate}
\item[(i)] Let $\Omega_M$ be an open subset of $\Omega^o$ such that $0 \not \in \mathrm{cl}\Omega_M$. Then there exist an open neighborhood $\mathcal{U}_{M,\Omega_M}$ of $(0,r_\ast)$ in $\mathbb{R}^2$ and a real analytic map $U_{M,\Omega_M}$ from $\mathcal{U}_{M,\Omega_M}$ to the space $C^{1,\alpha}(\mathrm{cl} \Omega_M)$ such that
\[
\mathcal{U}_{M,\Omega_M} \subseteq \mathcal{U}\, ,  \qquad \mathrm{cl}\Omega_M\subseteq \mathrm{cl}\Omega(\varrho_1,\varrho_2) \qquad \forall (\varrho_1,\varrho_2)\in \mathcal{U}_{M,\Omega_M} \, ,
\]
and such that
\[
\begin{split}
u[\varrho_1,\varrho_2](x)&= U_{M,\Omega_M}[\varrho_1,\varrho_2](x)\qquad\forall x\in \mathrm{cl}\Omega_M\,,
\end{split}
\]
for all $(\varrho_1,\varrho_2)\in\mathcal{U}_{M,\Omega_M} \cap ]0,+\infty[^2$. Moreover, 
\begin{equation}
\begin{split}
\label{eq:rep1}
U_{M,\Omega_M}[0,r_\ast](x)=\tilde{u}(x)\qquad\forall x\in \mathrm{cl}\Omega_M\, .
\end{split}
\end{equation}

\item[(ii)] Let $\Omega_m$ be a bounded open subset of $\mathbb{R}^{n}\setminus\cup_{j=1}^2(p^j+r_\ast{\mathrm{cl}}\Omega^i_j)$. Then there exist an open neighborhood $\mathcal{U}_{m,\Omega_m}$ of $(0,r_\ast)$ in $\mathbb{R}^2$ and a real analytic map $U_{m,\Omega_m}$ from $\mathcal{U}_{m,\Omega_m}$ to the space $C^{1,\alpha}(\mathrm{cl} \Omega_m)$ such that
\[
\mathcal{U}_{m,\Omega_m} \subseteq \mathcal{U}\, , \qquad \varrho_1\mathrm{cl}\Omega_m\subseteq \mathrm{cl}\Omega(\varrho_1,\varrho_2) \qquad \forall (\varrho_1,\varrho_2)\in \mathcal{U}_{m,\Omega_m} \,,
\]
and such that
\[
\begin{split}
u[\varrho_1,\varrho_2](\varrho_1 t)&= U_{m,\Omega_m}[\varrho_1,\varrho_2](t)+\delta_{2,n}\frac{\varrho_1 \varrho_2 \log \varrho_1}{2\pi}\sum_{j=1}^2\int_{\partial \Omega^i_j}f_j \, d\sigma\ \forall t\in \mathrm{cl}\Omega_m\,,
\end{split}
\]
for all $(\varrho_1,\varrho_2)\in\mathcal{U}_{m,\Omega_m} \cap ]0,+\infty[^2$. Moreover, 
\begin{equation}
\begin{split}
\label{eq:rep2}
U_{m,\Omega_m}[0,r_\ast](t)=\tilde{u}(0)\qquad\forall t\in \mathrm{cl}\Omega_m\, .
\end{split}
\end{equation}
\item[(iii)] Let $j \in \{1,2\}$. Let $l \in (\{1,2\}\setminus \{j\})$. Let $\Omega_{m^\ast}$ be a bounded open subset of $\mathbb{R}^{n}\setminus \mathrm{cl}\Omega^i_j$ such that $(p^j +r_\ast \mathrm{cl}\Omega_{m^\ast}) \cap (p^l +r_\ast \mathrm{cl}\Omega^i_l)= \emptyset$. Then there exist an open neighborhood $\mathcal{U}_{m^\ast,\Omega_{m^\ast}}$ of $(0,r_\ast)$ in $\mathbb{R}^2$ and a real analytic map $U_{j,m^\ast,\Omega_{m^\ast}}$ from $\mathcal{U}_{m^\ast,\Omega_{m^\ast}}$ to the space $C^{1,\alpha}(\mathrm{cl} \Omega_{m^\ast})$ such that
\[
\mathcal{U}_{m^\ast,\Omega_{m^\ast}} \subseteq \mathcal{U}\, , \qquad \varrho_1p^j+\varrho_1 \varrho_2\mathrm{cl}\Omega_{m^\ast}\subseteq \mathrm{cl}\Omega(\varrho_1,\varrho_2) \qquad \forall (\varrho_1,\varrho_2)\in \mathcal{U}_{m^\ast,\Omega_{m^\ast}} \,,
\]
and such that
\[
\begin{split}
u[\varrho_1,&\varrho_2](\varrho_1 p^j +\varrho_1\varrho_2 t)= U_{j,m^\ast,\Omega_{m^\ast}}[\varrho_1,\varrho_2](t)\\&+ \delta_{2,n}\varrho_1 \varrho_2 \Bigg(\frac{\log (\varrho_1 \varrho_2)}{2 \pi} \int_{\partial \Omega^i_j}f_j \, d\sigma+ \frac{\ \log \varrho_1 }{2 \pi} \int_{\partial \Omega^i_l}f_l \, d\sigma \Bigg)\quad \forall t \in \mathrm{cl}\Omega_{m^\ast}\,,
\end{split}
\]
for all $(\varrho_1,\varrho_2)\in\mathcal{U}_{m^\ast,\Omega_{m^\ast}} \cap ]0,+\infty[^2$. Moreover, 
\begin{equation}
\begin{split}
\label{eq:rep3}
U_{j,m^\ast,\Omega_{m^\ast}}[0,r_\ast](t)=\tilde{u}(0)\qquad\forall t\in \mathrm{cl}\Omega_{m^\ast}\, .
\end{split}
\end{equation}
\end{enumerate}
(Here the symbol `$M$' stands for `macroscopic' and  the symbols `$m$' and `$m^\ast$' stand for `microscopic'.)
\end{theorem}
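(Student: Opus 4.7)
The starting point will be the integral representation \eqref{eq:finteq1}, with the densities replaced by the real analytic maps $(\Theta^i_1,\Theta^i_2,\Theta^o,\Xi)$ supplied by Proposition \ref{prop:ansol}. I thus work with
\[
u[\varrho_1,\varrho_2](x) = (\varrho_1\varrho_2)^{n-1}\sum_{j=1}^{2}\int_{\partial\Omega^i_j}S_n(x-\varrho_1 p^j-\varrho_1\varrho_2 s)\Theta^i_j[\varrho_1,\varrho_2](s)\,d\sigma_s + \int_{\partial\Omega^o}S_n(x-y)\Theta^o[\varrho_1,\varrho_2](y)\,d\sigma_y + \Xi[\varrho_1,\varrho_2].
\]
Each of (i)--(iii) will be obtained by plugging in the appropriate form of $x$ and tracking, summand by summand, where the argument of $S_n$ becomes small. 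Whenever that argument stays bounded away from $0$ on a suitable neighborhood of $(0,r_\ast)$, the integral depends real analytically on $(\varrho_1,\varrho_2)$ by the standard real analyticity of integral operators with jointly real analytic kernels composed with real analytic densities (cf.\ the references invoked in the proof of Proposition \ref{prop:ansol}). The only genuinely new ingredient needed will be the homogeneity identities $S_n(\lambda z)=\lambda^{2-n}S_n(z)$ for $n\geq 3$ and $S_2(\lambda z)=S_2(z)+\tfrac{\log\lambda}{2\pi}$ for $\lambda>0$.

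In part (i), the assumption $0\notin\mathrm{cl}\Omega_M$ guarantees that $x-\varrho_1 p^j-\varrho_1\varrho_2 s$ stays bounded away from $0$ on a small neighborhood of $(0,r_\ast)$, and the whole right-hand side is thus real analytic in $(\varrho_1,\varrho_2)$ into $C^{1,\alpha}(\mathrm{cl}\Omega_M)$, defining $U_{M,\Omega_M}$ directly. For part (ii) I set $x=\varrho_1 t$ with $t\in\mathrm{cl}\Omega_m$: the hole argument becomes $\varrho_1\bigl((t-p^j)-\varrho_2 s\bigr)$, with the bracket uniformly bounded away from $0$ by the disjointness hypothesis on $\Omega_m$ (possibly after shrinking $\mathcal{U}_{m,\Omega_m}$), while the outer argument $\varrho_1 t-y$ remains away from $0$ since $y\in\partial\Omega^o$. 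Applying the homogeneity identity to $S_n(\varrho_1\cdot)$ in the hole terms yields, for $n\geq 3$, the analytic factor $\varrho_1\varrho_2^{n-1}$ times an analytic integral, and, for $n=2$, the explicit decomposition
\[
\frac{\varrho_1\varrho_2\log\varrho_1}{2\pi}\sum_{j=1}^{2}\int_{\partial\Omega^i_j}\Theta^i_j[\varrho_1,\varrho_2]\,d\sigma \;+\; (\text{analytic remainder}).
\]
Part (iii) is completely analogous, with $x=\varrho_1 p^j+\varrho_1\varrho_2 t$: the diagonal hole ($k=j$) yields argument $\varrho_1\varrho_2(t-s)$ and contributes, for $n=2$, $\varrho_1\varrho_2\log(\varrho_1\varrho_2)/(2\pi)\cdot\int_{\partial\Omega^i_j}\Theta^i_j\,d\sigma$, while the off-diagonal hole ($k=l\neq j$) has argument $\varrho_1\bigl((p^j-p^l)+\varrho_2(t-s)\bigr)$, with bracket bounded away from $0$ by \eqref{tildeU}, and contributes $\varrho_1\varrho_2\log\varrho_1/(2\pi)\cdot\int_{\partial\Omega^i_l}\Theta^i_l\,d\sigma$. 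The inclusions of the evaluation sets in $\mathrm{cl}\Omega(\varrho_1,\varrho_2)$ are purely topological and will follow by a further shrinking of the parameter neighborhood.

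The main obstacle will be to replace each $\int\Theta^i_k[\varrho_1,\varrho_2]\,d\sigma$ by the constant $\int f_k\,d\sigma$, so that the logarithmic coefficients match those stated in the theorem. I will establish the identity
\[
\int_{\partial\Omega^i_k}\Theta^i_k[\varrho_1,\varrho_2]\,d\sigma \;=\; \int_{\partial\Omega^i_k}f_k\,d\sigma\qquad \forall\,(\varrho_1,\varrho_2)\in\mathcal{U},\ k\in\{1,2\},
\]
by integrating the equation $\Lambda_k[\varrho_1,\varrho_2,\Theta^i_1,\Theta^i_2,\Theta^o,\Xi]=0$ over $\partial\Omega^i_k$ and invoking the Gauss identity $\int_{\partial\Omega}DS_n(t-\zeta)\nu_{\Omega}(t)\,d\sigma_t=\chi_{\Omega}(\zeta)$ (with boundary value $1/2$). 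The self-interaction term will integrate to $\int\Theta^i_k$, while the cross-hole and outer-boundary terms will integrate to zero after the natural changes of variable $t\mapsto\varrho_2 t+p^k$ and $t\mapsto\varrho_1\varrho_2 t+\varrho_1 p^k$, precisely because the images lie outside the corresponding double-layer domain thanks to \eqref{tildeU}. Without this identity the prefactor of $\log\varrho_1$ (resp.\ $\log(\varrho_1\varrho_2)$) would carry a nontrivial analytic dependence on $(\varrho_1,\varrho_2)$ that could not be absorbed into the analytic remainder. Finally, the limit formulas \eqref{eq:rep1}--\eqref{eq:rep3} will be read off at $(\varrho_1,\varrho_2)=(0,r_\ast)$: the prefactor $(\varrho_1\varrho_2)^{n-1}$ of the hole contribution in the analytic part vanishes there, leaving $\int_{\partial\Omega^o}S_n(x-y)\Theta^o[0,r_\ast]\,d\sigma_y+\Xi[0,r_\ast]=v^+[\partial\Omega^o,\tilde\theta^o](x)+\tilde\xi$, which by Remark \ref{rem:lim} equals $\tilde u(x)$ for part (i) and collapses to $\tilde u(0)$ after setting $x=0$ in the microscopic evaluations of parts (ii) and (iii).
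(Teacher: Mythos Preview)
Your proposal is correct and follows essentially the same approach as the paper's own proof: both start from the integral representation \eqref{eq:finteq1} with the analytic densities of Proposition \ref{prop:ansol}, substitute the appropriate evaluation point in each part, use the homogeneity identities for $S_n$ to isolate the logarithmic terms when $n=2$, and invoke real analyticity of integral operators with non-singular real analytic kernels. Your identification of the key identity $\int_{\partial\Omega^i_k}\Theta^i_k[\varrho_1,\varrho_2]\,d\sigma=\int_{\partial\Omega^i_k}f_k\,d\sigma$ (obtained by integrating $\Lambda_k=0$ and using the Gauss identity) is exactly the paper's equality \eqref{intf}, and your limit computations match as well; the only detail the paper adds that you do not mention is enlarging $\Omega_M$, $\Omega_m$, $\Omega_{m^\ast}$ to be of class $C^1$ so that the target Schauder space $C^{1,\alpha}$ is well behaved, a routine technicality.
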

\begin{proof}   We first prove statement (i). By possibly taking a bigger $\Omega_M$, we can assume that  $\Omega_M$ is of class $C^1$. Clearly, there exists an open neighborhood $\mathcal{U}_{M,\Omega_M}$ of $(0,r_\ast)$ in $\mathbb{R}^2$ such that $\mathcal{U}_{M,\Omega_M} \subseteq \mathcal{U}$ and that
 \[
 \mathrm{cl}\Omega_M \cap (\cup_{j=1}^2 (\varrho_1 p^j +\varrho_1\varrho_2 \mathrm{cl}\Omega^i_j))= \emptyset \qquad \forall (\varrho_1,\varrho_2) \in \mathcal{U}_{M,\Omega_M}\, .
 \]
Then we introduce the map $U_{M,\Omega_M}$ from $\mathcal{U}_{M,\Omega_M}$ to $C^{1,\alpha}(\mathrm{cl}\Omega_M)$ by setting
\[
\begin{split}
U_{M,\Omega_M}[\varrho_1,\varrho_2](x)&\equiv (\varrho_1 \varrho_2)^{n-1}\sum_{j=1}^2 \int_{\partial \Omega^i_j}S_n(x-\varrho_1p^j-\varrho_1 \varrho_2 s)\Theta^{i}_j[\varrho_1,\varrho_2](s)\, d\sigma_s\\&+\int_{\partial \Omega^o}S_n(x-y)\Theta^{o}[\varrho_1,\varrho_2](y)\, d\sigma_y +\Xi[\varrho_1,\varrho_2] \qquad \forall x \in \mathrm{cl} \Omega_M\, ,
\end{split}
\]
for all $(\varrho_1,\varrho_2) \in \mathcal{U}_{M,\Omega_M}$.  By standard properties of integral  operators with real analytic kernels and with no singularity, by standard properties of functions in Schauder spaces, by classical mapping properties of layer potentials  (cf.~Lanza de Cristoforis and the second-named author \cite{LaMu13},  Miranda~\cite{Mi65}, Lanza de Cristoforis and Rossi \cite[Thm.~3.1]{LaRo04}), and by Proposition \ref{prop:ansol},  we conclude that $U_{M,\Omega_M}$ is real analytic. Moreover, Proposition \ref{prop:ansol} implies that $\Theta^{o}[0,r_\ast]=\tilde{\theta}^{o}$ and that $\Xi^{o}[0,r_\ast]=\tilde{\xi}$, and thus
 \[
 \begin{split}
U_{M,\Omega_M}[0,r_\ast](x)= \int_{\partial \Omega^o}S_n(x-y)\tilde{\theta}^{o}(y)\, d\sigma_y+\tilde{\xi}=\tilde{u}(x) \qquad \forall x \in \mathrm{cl}\Omega_M\, ,
 \end{split}
 \]
 and the validity of equality \eqref{eq:rep1} follows.
 
We now consider statement (ii). By possibly taking a bigger $\Omega_m$, we can assume that  $\Omega_m$ is of class $C^1$. Clearly, there exists an open neighborhood $\mathcal{U}_{m,\Omega_m}$ of $(0,r_\ast)$ in $\mathbb{R}^2$ such that $\mathcal{U}_{m,\Omega_m} \subseteq \mathcal{U}$ and that
 \[
 \mathrm{cl}\Omega_m \cap (\cup_{j=1}^2 ( p^j +\varrho_2 \mathrm{cl}\Omega^i_j))= \emptyset \, , \qquad \varrho_1 \mathrm{cl}\Omega_m \subseteq \mathrm{cl}\Omega^o \qquad \forall (\varrho_1,\varrho_2) \in \mathcal{U}_{M,\Omega_M}\, .
 \]
Then we introduce the map $U_{m,\Omega_m}$ from $\mathcal{U}_{m,\Omega_m}$ to $C^{1,\alpha}(\mathrm{cl}\Omega_m)$ by setting
\[
\begin{split}
U_{m,\Omega_m}[\varrho_1,\varrho_2](t)&\equiv \varrho_1 \varrho_2^{n-1}\sum_{j=1}^2 \int_{\partial \Omega^i_j}S_n(t-p^j- \varrho_2 s)\Theta^{i}_j[\varrho_1,\varrho_2](s)\, d\sigma_s\\&+\int_{\partial \Omega^o}S_n(\varrho_1t-y)\Theta^{o}[\varrho_1,\varrho_2](y)\, d\sigma_y +\Xi[\varrho_1,\varrho_2] \qquad \forall t \in \mathrm{cl} \Omega_m\, ,
\end{split}
\]
for all $(\varrho_1,\varrho_2) \in \mathcal{U}_{m,\Omega_m}$. By equality \eqref{eq:ansol} we have 
\[
\begin{split}
\int_{\partial \Omega^i_j}\Lambda_j\bigl[\varrho_1, \varrho_2, \Theta^{i}_1[\varrho_1, \varrho_2],\Theta^{i}_2[\varrho_1, \varrho_2], \Theta^{o}[\varrho_1, &\varrho_2], \Xi[\varrho_1, \varrho_2]\bigr]\, d\sigma=0  \\& \forall (\varrho_1, \varrho_2) \in \mathcal{U}\, , \ \forall j\in \{1,2\}\, .
\end{split}
\]
Thus, by classical potential theory, we have
\begin{equation}\label{intf}
\int_{\partial \Omega^i_j}\Theta^i_j[\varrho_1, \varrho_2] \, d\sigma= \int_{\partial \Omega^i_j}f_j \, d\sigma \qquad \forall  (\varrho_1, \varrho_2) \in \mathcal{U}\, , \ \forall j \in \{1,2\}\, .
\end{equation}
Then by a simple computation, one verifies that
\[
\begin{split}
u[\varrho_1,\varrho_2](\varrho_1 t)=U_{m,\Omega_m}[\varrho_1,\varrho_2](t)+ \delta_{2,n}\frac{\varrho_1 \varrho_2 \log \varrho_1}{2 \pi}\sum_{j=1}^2 \int_{\partial \Omega^i_j}f_j \, d\sigma \qquad \forall t \in \mathrm{cl}\Omega_m\,,
\end{split}
\] 
for all $(\varrho_1,\varrho_2) \in \mathcal{U}_{m,\Omega_m} \cap ]0,+\infty[^2$. By standard properties of integral  operators with real analytic kernels and with no singularity, by standard properties of functions in Schauder spaces, by classical mapping properties of layer potentials  (cf.~Miranda~\cite{Mi65}, Lanza de Cristoforis and Rossi \cite[Thm.~3.1]{LaRo04}, Lanza de Cristoforis and the second-named author \cite{LaMu13}), and by Proposition \ref{prop:ansol},  we conclude that $U_{m,\Omega_m}$ is real analytic. Moreover, Proposition \ref{prop:ansol} implies that $\Theta^{o}[0,r_\ast]=\tilde{\theta}^{o}$ and that $\Xi^{o}[0,r_\ast]=\tilde{\xi}$, and thus
 \[
 \begin{split}
U_{m,\Omega_m}[0,r_\ast](t)= \int_{\partial \Omega^o}S_n(0-y)\tilde{\theta}^{o}(y)\, d\sigma_y+\tilde{\xi}=\tilde{u}(0) \qquad \forall t \in \mathrm{cl}\Omega_m\, ,
 \end{split}
 \]
 and the validity of equality \eqref{eq:rep2} follows. Thus the proof of statement (ii) is complete. 
 
 We now turn to prove statement (iii). By possibly taking a bigger $\Omega_{m^\ast}$, we can assume that  $\Omega_{m^\ast}$ is of class $C^1$. Clearly, there exists an open neighborhood $\mathcal{U}_{m^\ast,\Omega_{m^\ast}}$ of $(0,r_\ast)$ in $\mathbb{R}^2$ such that $\mathcal{U}_{m^\ast,\Omega_{m^\ast}} \subseteq \mathcal{U}$ and that
 \[
\begin{split}
&\left(p^j+\varrho_2\mathrm{cl}\Omega_{m^\ast}\right)\cap\left(p^l+\varrho_2\mathrm{cl}\Omega^i_l\right)= \emptyset  \qquad \forall (\varrho_1,\varrho_2) \in \mathcal{U}_{m^\ast,\Omega_{m^\ast}}\, ,\\
&\bigg (\varrho_1 p^j+\varrho_1 \varrho_2 \mathrm{cl}\Omega_{m^\ast}\bigg) \subseteq \Omega^o \qquad \forall (\varrho_1,\varrho_2)\in \mathcal{U}_{m^\ast,\Omega_{m^\ast}}\, .
\end{split}
\]
Then we introduce the map $U_{j,m^\ast,\Omega_{m^\ast}}$ from $\mathcal{U}_{m^\ast,\Omega_{m^\ast}}$ to $C^{1,\alpha}(\mathrm{cl}\Omega_{m^\ast})$ by setting
\[
\begin{split}
U_{j,m^\ast,\Omega_{m^\ast}}[&\varrho_1,\varrho_2](t)\equiv \varrho_1 \varrho_2 \int_{\partial \Omega^i_j}S_n(t- s)\Theta^{i}_j[\varrho_1,\varrho_2](s)\, d\sigma_s\\&+\varrho_1\varrho_2^{n-1}\int_{\partial \Omega^i_l}S_n(p^j+\varrho_2 t-p^l -\varrho_2 s)\Theta^{i}_l[\varrho_1,\varrho_2](s)\, d\sigma_s\\&+\int_{\partial \Omega^o}S_n(\varrho_1p^j+\varrho_1 \varrho_2 t-y)\Theta^{o}[\varrho_1,\varrho_2](y)\, d\sigma_y +\Xi[\varrho_1,\varrho_2] \qquad \forall t \in \mathrm{cl} \Omega_{m^\ast}\, ,
\end{split}
\]
for all $(\varrho_1,\varrho_2) \in \mathcal{U}_{m^\ast,\Omega_{m^\ast}}$. By classical potential theory,  by equality \eqref{intf}, and by a simple computation, one verifies that
\[
\begin{split}
u[\varrho_1,\varrho_2](&\varrho_1p^j +\varrho_1 \varrho_2 t)=U_{j,m^\ast,\Omega_{m^\ast}}[\varrho_1,\varrho_2](t)\\
&+ \delta_{2,n}\varrho_1 \varrho_2 \Bigg(\frac{\log (\varrho_1 \varrho_2)}{2 \pi} \int_{\partial \Omega^i_j}f_j \, d\sigma+ \frac{\ \log \varrho_1 }{2 \pi} \int_{\partial \Omega^i_l}f_l \, d\sigma \Bigg)\qquad \forall t \in \mathrm{cl}\Omega_{m^\ast}\,,
\end{split}
\] 
for all $(\varrho_1,\varrho_2) \in \mathcal{U}_{m^\ast,\Omega_{m^\ast}} \cap ]0,+\infty[^2$. By standard properties of integral  operators with real analytic kernels and with no singularity, by standard properties of functions in Schauder spaces, by classical mapping properties of layer potentials  (cf.~Lanza de Cristoforis and the second-named author \cite{LaMu13},  Miranda~\cite{Mi65}, Lanza de Cristoforis and Rossi \cite[Thm.~3.1]{LaRo04}), and by Proposition \ref{prop:ansol},  we conclude that $U_{j,m^\ast,\Omega_{m^\ast}}$ is real analytic. Moreover, Proposition \ref{prop:ansol} implies that $\Theta^{o}[0,r_\ast]=\tilde{\theta}^{o}$ and that $\Xi^{o}[0,r_\ast]=\tilde{\xi}$, and thus
 \[
 \begin{split}
U_{j,m^\ast,\Omega_{m^\ast}}[0,r_\ast](t)= \int_{\partial \Omega^o}S_n(0-y)\tilde{\theta}^{o}(y)\, d\sigma_y+\tilde{\xi}=\tilde{u}(0) \qquad \forall t \in \mathrm{cl}\Omega_{m^\ast}\, ,
 \end{split}
 \]
 and the validity of equality \eqref{eq:rep3} follows. \qquad\end{proof}
 
Then by Theorem \ref{thm:rep}, we immediately deduce the validity of the following.

\begin{corollary}\label{cor:repeps}
Let the assumptions of Theorem \ref{thm:rep} hold. Let $\eta$, $r_\ast$ be as in \eqref{eta}. Let $\epsilon_{0}$ be as in \eqref{e0}.  Then the following statements hold.
\begin{enumerate}
\item[(i)] Let $\Omega_M$, $\mathcal{U}_{M,\Omega_M}$, $U_{M,\Omega_M}$ be as in Theorem \ref{thm:rep} (i). Then there exists
$\epsilon_{M,\Omega_M}\in]0,\epsilon_0[$ such that
\[
(\eta(\epsilon),\epsilon/\eta(\epsilon)) \in \mathcal{U}_{M,\Omega_M}\, , \qquad \mathrm{cl}\Omega_M\subseteq \mathrm{cl}\Omega_\epsilon \qquad \forall \epsilon\in ]0,\epsilon_{M,\Omega_M}[ \,,
\]
and such that
\[
\begin{split}
u_\epsilon(x)&= U_{M,\Omega_M}[\eta(\epsilon),\epsilon/\eta(\epsilon)](x)\qquad\forall x\in \mathrm{cl}\Omega_M\,,
\end{split}
\]
for all $\epsilon\in]0,\epsilon_{M,\Omega_M}[$. 
\item[(ii)] Let $\Omega_m$, $\mathcal{U}_{m,\Omega_m}$, $U_{m,\Omega_m}$ be as in Theorem \ref{thm:rep} (ii). Then there exists $\epsilon_{m,\Omega_m}\in]0,\epsilon_0[$ such that
\[
(\eta(\epsilon),\epsilon/\eta(\epsilon)) \in \mathcal{U}_{m,\Omega_m}\, , \qquad \eta(\epsilon)\mathrm{cl}\Omega_m\subseteq \mathrm{cl}\Omega_\epsilon \qquad \forall \epsilon\in ]0,\epsilon_{m,\Omega_m}[ \,,
\]
and such that
\[
\begin{split}
u_\epsilon(\eta(\epsilon)t)&= U_{m,\Omega_m}[\eta(\epsilon),\epsilon/\eta(\epsilon)](t)+\delta_{2,n}\frac{\epsilon \log \eta(\epsilon)}{2\pi}\sum_{j=1}^2\int_{\partial \Omega^i_j}f_j \, d\sigma\quad\forall t\in \mathrm{cl}\Omega_m\,,
\end{split}
\]
for all $\epsilon\in]0,\epsilon_{m,\Omega_m}[$.
\item[(iii)] Let $j$, $l$, $\Omega_{m^\ast}$, $\mathcal{U}_{m^\ast,\Omega_{m^\ast}}$, $U_{j,m^\ast,\Omega_{m^\ast}}$ be as in Theorem \ref{thm:rep} (iii). Then there exists $\epsilon_{m^\ast,\Omega_{m^\ast}}\in]0,\epsilon_0[$ such that
\[
(\eta(\epsilon),\epsilon/\eta(\epsilon)) \in \mathcal{U}_{m^\ast,\Omega_{m^\ast}}\, , \qquad \eta(\epsilon)p^j+\epsilon\mathrm{cl}\Omega_{m^\ast}\subseteq \mathrm{cl}\Omega_\epsilon \qquad \forall \epsilon\in ]0,\epsilon_{m^\ast,\Omega_{m^\ast}}[ \,,
\]
and such that
\[
\begin{split}
u_\epsilon(&\eta(\epsilon) p^j +\epsilon t)= U_{j,m^\ast,\Omega_{m^\ast}}[\eta(\epsilon),\epsilon/\eta(\epsilon)](t)\\&+ \delta_{2,n}\epsilon \Bigg(\frac{\log \epsilon}{2 \pi} \int_{\partial \Omega^i_j}f_j \, d\sigma+ \frac{\ \log \eta(\epsilon) }{2 \pi} \int_{\partial \Omega^i_l}f_l \, d\sigma \Bigg)\qquad \forall t \in \mathrm{cl}\Omega_{m^\ast}\,,
\end{split}
\]
for all $\epsilon\in ]0,\epsilon_{m^\ast,\Omega_{m^\ast}}[$. \end{enumerate}
\end{corollary}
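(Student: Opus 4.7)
The proof plan is essentially a direct substitution argument, since all the analytical heavy lifting has already been carried out in Theorem \ref{thm:rep}. The bridge is the identification $u_\epsilon = u[\eta(\epsilon), \epsilon/\eta(\epsilon)]$ and $\Omega_\epsilon = \Omega(\eta(\epsilon), \epsilon/\eta(\epsilon))$ established at the end of Section \ref{for}, together with the limit relation $(\eta(\epsilon), \epsilon/\eta(\epsilon)) \to (0, r_\ast)$ as $\epsilon \to 0^+$ coming from \eqref{eta}.

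For each of the three statements, I would proceed as follows. Given the neighborhood $\mathcal{U}_{M,\Omega_M}$ (respectively $\mathcal{U}_{m,\Omega_m}$, $\mathcal{U}_{m^\ast,\Omega_{m^\ast}}$) of $(0,r_\ast)$ produced by Theorem \ref{thm:rep}, I would invoke continuity of the map $\epsilon \mapsto (\eta(\epsilon), \epsilon/\eta(\epsilon))$ from the right at $0$ together with \eqref{eta} to produce a threshold $\epsilon_{M,\Omega_M} \in ]0,\epsilon_0[$ such that $(\eta(\epsilon), \epsilon/\eta(\epsilon))$ lies in the chosen neighborhood for all $\epsilon \in ]0, \epsilon_{M,\Omega_M}[$. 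The inclusion $\mathrm{cl}\Omega_M \subseteq \mathrm{cl}\Omega_\epsilon$ (and its analogs) then follows from the corresponding inclusion stated in Theorem \ref{thm:rep} applied to $(\varrho_1,\varrho_2) = (\eta(\epsilon), \epsilon/\eta(\epsilon))$, possibly after a further shrinking of $\epsilon_{M,\Omega_M}$ to ensure that $\eta(\epsilon)\mathrm{cl}\Omega_m$ (or $\eta(\epsilon)p^j + \epsilon\mathrm{cl}\Omega_{m^\ast}$) actually sits inside $\Omega_\epsilon$.

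The explicit representation formulas then follow by substituting $\varrho_1 = \eta(\epsilon)$ and $\varrho_2 = \epsilon/\eta(\epsilon)$ into the formulas of Theorem \ref{thm:rep}, using the identities $\varrho_1\varrho_2 = \epsilon$, $\varrho_1\varrho_2\log \varrho_1 = \epsilon \log \eta(\epsilon)$, $\log(\varrho_1\varrho_2) = \log \epsilon$, and $\log \varrho_1 = \log \eta(\epsilon)$. In particular, for statement (ii) the $\delta_{2,n}\frac{\varrho_1\varrho_2 \log \varrho_1}{2\pi}$ factor becomes $\delta_{2,n}\frac{\epsilon \log \eta(\epsilon)}{2\pi}$, while for statement (iii) the combination $\delta_{2,n}\varrho_1\varrho_2\bigl(\frac{\log(\varrho_1\varrho_2)}{2\pi}\int f_j\, d\sigma + \frac{\log \varrho_1}{2\pi}\int f_l\, d\sigma\bigr)$ becomes $\delta_{2,n}\epsilon\bigl(\frac{\log \epsilon}{2\pi}\int f_j\, d\sigma + \frac{\log \eta(\epsilon)}{2\pi}\int f_l\, d\sigma\bigr)$.

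There is no real obstacle here: everything analytical is hidden inside Theorem \ref{thm:rep}, and what remains is a bookkeeping step of substituting one pair of parameters for another and translating neighborhoods of $(0,r_\ast)$ into thresholds on $\epsilon$. The only mild point of attention is to note that the domain inclusions in Theorem \ref{thm:rep} involve $\varrho_1 \mathrm{cl}\Omega_m$ and $\varrho_1 p^j + \varrho_1\varrho_2\mathrm{cl}\Omega_{m^\ast}$, which under the substitution become $\eta(\epsilon)\mathrm{cl}\Omega_m$ and $\eta(\epsilon)p^j + \epsilon\mathrm{cl}\Omega_{m^\ast}$, matching exactly the inclusions claimed in the corollary.
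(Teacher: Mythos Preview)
Your proposal is correct and matches the paper's approach exactly: the paper simply states that the corollary follows ``immediately'' from Theorem~\ref{thm:rep}, and your substitution argument spells out precisely why. There is nothing to add.
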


\begin{remark}
Under the assumptions of Corollary \ref{cor:repeps}, we note that if $x \in \mathrm{cl}\Omega^o \setminus \{0\}$ is fixed, then we can deduce the existence of a sequence $\{c_{(j_1,j_2)}\}_{(j_1,j_2)\in \mathbb{N}^2\setminus \{(0,0)\}}$ such that
\[
u_\epsilon(x)=\tilde{u}(x)+\sum_{(j_1,j_2)\in \mathbb{N}^2\setminus \{(0,0)\}}c_{(j_1,j_2)} \Big(\eta(\epsilon)\Big)^{j_1}\bigg(\frac{\epsilon}{\eta(\epsilon)}-r_\ast\bigg)^{j_2} \, ,
\]
for $\epsilon$ in a neighborhood of $0$. Moreover, if we know that $\eta(\epsilon)$ equals the restriction to positive values of $\epsilon$ of a real analytic function defined in a neighborhood of $0$, then by \eqref{eta} the function $\epsilon/\eta(\epsilon)$ has a real analytic continuation in a neighborhood of $\epsilon=0$ and thus we can deduce the existence of a sequence $\{c_j\}_{j \in \mathbb{N}\setminus \{0\}}$ such that
\[
u_\epsilon(x)=\tilde{u}(x)+\sum_{j \in \mathbb{N}\setminus \{0\}}c_j \epsilon^j \, ,
\]
for $\epsilon$ small and positive, where the series converges absolutely in a neighborhood of $0$.
\end{remark}

\section{Asymptotic expansion of the solution of the mixed problem}\label{asy} 

The aim of this section is to provide an asymptotic expansion of the solution $u[\varrho_1,\varrho_2]$ of the mixed problem \eqref{bvprho} as $(\varrho_1,\varrho_2)$ tends to the degenerate value $(0,r_\ast)$. We shall assume that $r_\ast =0$ and we will focus on the two-dimensional case. As already done in \cite{DaMuRo} for the Dirichlet problem for the Laplace equation, since the solution is represented by means of layer potentials, we first need to obtain expansions of the densities of the layer potentials. Therefore, here we first compute an expansion in the variable $(\varrho_1,\varrho_2)$ of $(\Theta^{i}_1[\varrho_1,\varrho_2],\Theta^{i}_2[ \varrho_1,\varrho_2], \Theta^{o}[ \varrho_1,\varrho_2], \Xi[ \varrho_1,\varrho_2])$ for $(\varrho_1,\varrho_2)$ close to the degenerate value $(0,r_\ast)=(0,0)$. On the other hand, by the real analyticity of $(\Theta^{i}_1,\Theta^{i}_2, \Theta^{o}, \Xi)$ (cf.~Proposition \ref{prop:ansol}), we know that there exist families $\{\theta^{i}_{1,(j,k)}\}_{(j,k) \in \mathbb{N}^2} \subseteq C^{0,\alpha}(\partial \Omega^i_1)$,  $\{\theta^{i}_{2,(j,k)}\}_{(j,k)\in \mathbb{N}^2} \subseteq C^{0,\alpha}(\partial \Omega^i_2)$, $\{\theta^{o}_{(j,k)}\}_{(j,k) \in \mathbb{N}^2} \subseteq C^{0,\alpha}(\partial \Omega^o)_0$, $\{\xi_{(j,k)}\}_{(j,k) \in \mathbb{N}^2} \subseteq \mathbb{R}$, such that for  $(\varrho_1,\varrho_2)$ in a neighborhood of $(0,0)$ we have
\[
\begin{split}
& \Theta^{i}_1[\varrho_1,\varrho_2]=\sum_{(j,k)\in \mathbb{N}^2} \frac{\theta^{i}_{1,(j,k)}}{j!k!}\varrho_1^j \varrho_2^k\, , \qquad \Theta^{i}_2[\varrho_1,\varrho_2]=\sum_{(j,k)\in \mathbb{N}^2} \frac{\theta^{i}_{2,(j,k)}}{j!k!}\varrho_1^j \varrho_2^k\, , \\
& \Theta^{o}[\varrho_1,\varrho_2]=\sum_{(j,k)\in \mathbb{N}^2} \frac{\theta^{o}_{(j,k)}}{j!k!}\varrho_1^j \varrho_2^k\, , \qquad \  \ \Xi[\varrho_1,\varrho_2]=\sum_{(j,k)\in \mathbb{N}^2} \frac{\xi_{(j,k)}}{j!k!}\varrho_1^j \varrho_2^k\, ,
\end{split}
\]
where the series converge absolutely in $C^{0,\alpha}(\partial \Omega^i_1)$, in $C^{0,\alpha}(\partial \Omega^i_2)$, in $C^{0,\alpha}(\partial \Omega^o)_0$, and in $\mathbb{R}$, respectively, uniformly for $(\varrho_1,\varrho_2)$ in a compact neighborhood of $(0,0)$. In particular,
\[
\begin{split}
&\theta^{i}_{1,(j,k)}=\partial_{\varrho_1}^j \partial_{\varrho_2}^k \Theta^{i}_1[0,0]\, , \qquad  \theta^{i}_{2,(j,k)}=\partial_{\varrho_1}^j \partial_{\varrho_2}^k \Theta^{i}_2[0,0]\, ,\\
& \theta^{o}_{(j,k)}=\partial_{\varrho_1}^j \partial_{\varrho_2}^k \Theta^{o}[0,0]\, , \qquad \ \ \xi_{(j,k)}=\partial_{\varrho_1}^j \partial_{\varrho_2}^k \Xi[0,0]\,, 
\end{split}
\]
for all $(j,k) \in \mathbb{N}^2 \setminus \{(0,0)\}$, and
\[
\begin{split}
&\theta^{i}_{1,(0,0)}=\tilde{\theta}^{i}_{1}\, , \qquad  \theta^{i}_{2,(0,0)}=\tilde{\theta}^{i}_{2}\, ,\\
& \theta^{o}_{(0,0)}=\tilde{\theta}^{o}\, , \qquad \ \ \xi_{(0,0)}=\tilde{\xi}\, .
\end{split}
\]
We now plan to identify some suitable coefficients $\theta^{i}_{1,(j,k)}$,  $\theta^{i}_{2,(j,k)}$, $\theta^{o}_{(j,k)}$, $\xi_{(j,k)}$ as the solutions of certain integral equations, in order to study the asymptotic expansion of $u[\varrho_1, \varrho_2]$. To do so, we shall exploit the fact that by equality \eqref{eq:ansol} we have 
\begin{equation}\label{eq:ansol:der}
\begin{split}
\partial_{\varrho_1}^j \partial_{\varrho_2}^k\Lambda\bigl[\varrho_1, \varrho_2, \Theta^{i}_1[\varrho_1,\varrho_2],\Theta^{i}_2[\varrho_1,\varrho_2], \Theta^{o}[\varrho_1,&\varrho_2], \Xi[\varrho_1,\varrho_2]\bigr]=0 \\ &\forall (\varrho_1,\varrho_2) \in  \mathcal{U}\, , \forall (j,k) \in \mathbb{N}^2\, .
\end{split}
\end{equation}

In the following lemma we consider the first coefficients $\theta^o_{(j,k)}$, $\xi_{(j,k)}$. In particular, we show that if $n=2$, then $\theta^o_{(j,0)}$,  $\theta^o_{(0,k)}$, $\xi_{(j,0)}$, and $\xi_{(0,k)}$ are all equal to $0$  for all $(j,k) \in  \mathbb{N}^2\setminus \{(0,0)\}$.

\begin{lemma}\label{lem:1}
Let $r_\ast=0$. Let the assumptions of Proposition \ref{prop:ansol} hold.  Then
\[
\begin{split}
\theta^o_{(j,k)}=0\, , \ \xi_{(j,k)}=0\, ,
\end{split}
\]
for all $(j,k) \in \Big(\{0,1,\dots,n-2\}\times \big(\mathbb{N} \setminus \{0\}\big) \Big)\cup  \Big(  \big(\mathbb{N} \setminus \{0\}\big) \times \{0,1,\dots,n-2\}\Big)$.
In particular, if $n=2$, then 
\[
\theta^o_{(j,0)}=0\, ,\ \theta^o_{(0,k)}=0\, , \ \xi_{(j,0)}=0\, , \ \xi_{(0,k)}=0\, , \qquad \forall (j,k) \in  \mathbb{N}^2\setminus \{(0,0)\}\, .
\]
\end{lemma}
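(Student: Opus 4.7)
The strategy is to differentiate the identity
\[
\Lambda_3\bigl[\varrho_1, \varrho_2, \Theta^{i}_1[\varrho_1,\varrho_2], \Theta^{i}_2[\varrho_1,\varrho_2], \Theta^{o}[\varrho_1,\varrho_2], \Xi[\varrho_1,\varrho_2]\bigr] = 0
\]
(which holds on $\mathcal{U}$ by \eqref{eq:ansol:der}) $j$ times in $\varrho_1$ and $k$ times in $\varrho_2$ and then evaluate at $(\varrho_1,\varrho_2) = (0, r_\ast) = (0,0)$. The crucial structural feature of $\Lambda_3$ is that both contributions coming from the ``inner'' boundaries $\partial \Omega^i_1$ and $\partial \Omega^i_2$ carry the prefactor $(\varrho_1 \varrho_2)^{n-1} = \varrho_1^{n-1}\varrho_2^{n-1}$. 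Since for $x \in \partial \Omega^o$ and $(\varrho_1,\varrho_2)$ near $(0,0)$ the argument $x - \varrho_1 p^j - \varrho_1 \varrho_2 s$ stays bounded away from $0$ (because $0 \in \Omega^o$), the integrand is jointly real analytic in $(\varrho_1,\varrho_2)$, and each inner term has the form $\varrho_1^{n-1}\varrho_2^{n-1} H_j[\varrho_1,\varrho_2](x)$ with $H_j$ analytic. A Leibniz count then shows that $\partial_{\varrho_1}^j \partial_{\varrho_2}^k$ of such a product at $(0,0)$ is zero unless both $j \geq n-1$ and $k \geq n-1$.

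The index set in the statement is precisely $\{(j,k) \in \mathbb{N}^2 \setminus \{(0,0)\} : j \leq n-2 \text{ or } k \leq n-2\}$, on which the two inner contributions disappear. Moreover $g$ does not depend on $(\varrho_1,\varrho_2)$, so its derivatives disappear whenever $(j,k) \neq (0,0)$. Hence for every $(j,k)$ in the index set of the statement the differentiation collapses to
\[
\int_{\partial \Omega^o} S_n(x - y) \theta^o_{(j,k)}(y) \, d\sigma_y + \xi_{(j,k)} = 0 \qquad \forall x \in \partial \Omega^o,
\]
with $\theta^o_{(j,k)} \in C^{0,\alpha}(\partial \Omega^o)_0$ and $\xi_{(j,k)} \in \mathbb{R}$. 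By Lemma \ref{lem:smp}(ii), the map $(\mu,\xi) \mapsto v[\partial \Omega^o,\mu]_{|\partial \Omega^o} + \xi$ is a linear homeomorphism from $C^{0,\alpha}(\partial \Omega^o)_0 \times \mathbb{R}$ onto $C^{1,\alpha}(\partial \Omega^o)$, whence $(\theta^o_{(j,k)}, \xi_{(j,k)}) = (0,0)$, as desired.

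The ``in particular'' case $n = 2$ is then immediate: with $n - 2 = 0$ the index set collapses to $\{(j,k) \in \mathbb{N}^2 \setminus \{(0,0)\} : j = 0 \text{ or } k = 0\}$, which is exactly the union $\{(j,0)\}_{j \geq 1} \cup \{(0,k)\}_{k \geq 1}$ of the claim. I do not foresee any real obstacle; the only step that deserves a moment of care is the Leibniz bookkeeping that identifies precisely which mixed partial derivatives of $\varrho_1^{n-1}\varrho_2^{n-1} H_j[\varrho_1,\varrho_2](x)$ survive at the origin, and this is elementary.
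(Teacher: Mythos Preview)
Your proposal is correct and follows essentially the same approach as the paper: differentiate the identity $\Lambda_3[\cdot]=0$, observe that the inner-boundary contributions carry the prefactor $(\varrho_1\varrho_2)^{n-1}$ and hence vanish at $(0,0)$ after at most $n-2$ derivatives in either variable, and conclude via Lemma~\ref{lem:smp}(ii). The only cosmetic difference is that the paper treats the two pieces of the index set separately (factoring out $\varrho_1^{n-1}$ for one and $\varrho_2^{n-1}$ for the other), whereas you handle both at once with a two-variable Leibniz count; the content is the same.
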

 \begin{proof}  Let $(j,k) \in \{0,1,\dots,n-2\}\times (\mathbb{N} \setminus \{0\})$. A simple computation shows that
 \begin{equation}\label{eq:1:4}
 \begin{split}
 \partial_{\varrho_1}^j &\partial_{\varrho_2}^k \Lambda_3\bigl[\varrho_1, \varrho_2, \Theta^{i}_1[\varrho_1,\varrho_2],\Theta^{i}_2[\varrho_1,\varrho_2], \Theta^{o}[\varrho_1,\varrho_2], \Xi[\varrho_1,\varrho_2]\bigr](x)\\
 &=\int_{\partial \Omega^o}S_n(x-y)\partial_{\varrho_1}^j \partial_{\varrho_2}^k\Theta^{o}[\varrho_1,\varrho_2](y)\, d\sigma_y +\partial_{\varrho_1}^j \partial_{\varrho_2}^k\Xi[\varrho_1,\varrho_2]\\&+\partial_{\varrho_1}^j \bigg(\varrho_1^{n-1}\tilde{R}_1[\varrho_1,\varrho_2](x)\bigg) \qquad \forall x \in \partial \Omega^o\, ,
 \end{split}
 \end{equation}
for $(\varrho_1,\varrho_2) \in  \mathcal{U}$, where $\tilde{R}_1$ is a real analytic function from $\mathcal{U}$ to $C^{1,\alpha}(\partial \Omega^o)$. Accordingly, by \eqref{eq:ansol:der} and \eqref{eq:1:4}, we have
\[
\begin{split}
0=\int_{\partial \Omega^o}S_n(x-y)\partial_{\varrho_1}^j \partial_{\varrho_2}^k\Theta^{o}[\varrho_1,\varrho_2](y)\, d\sigma_y +\partial_{\varrho_1}^j \partial_{\varrho_2}^k\Xi[\varrho_1,\varrho_2]+\varrho_1^{n-1-j}&\tilde{R}_2[\varrho_1,\varrho_2](x) \\& \forall x \in \partial \Omega^o\, ,
\end{split}
\]
 for $(\varrho_1,\varrho_2) \in  \mathcal{U}$, where  $\tilde{R}_2$ is a real analytic function from $\mathcal{U}$ to $C^{1,\alpha}(\partial \Omega^o)$. Then, by taking $(\varrho_1,\varrho_2)=(0,0)$ we obtain
 \[
 0=\int_{\partial \Omega^o}S_n(x-y)\partial_{\varrho_1}^j \partial_{\varrho_2}^k\Theta^{o}[0,0](y)\, d\sigma_y +\partial_{\varrho_1}^j \partial_{\varrho_2}^k\Xi[0,0] \qquad \forall x \in \partial \Omega^o\, ,
 \]
 which implies 
 \[
 \partial_{\varrho_1}^j \partial_{\varrho_2}^k\Theta^{o}[0,0]=0\, ,\qquad \partial_{\varrho_1}^j \partial_{\varrho_2}^k\Xi[0,0]=0\, ,
 \]
 \textit{i.e.},
  \[
\theta^{o}_{(j,k)}=0\, ,\qquad \xi_{(j,k)}=0\, .
 \]
 Similarly, one shows that if $(j,k) \in (\mathbb{N}\setminus \{0\}) \times \{0,1,\dots,n-2\}$, then
 \[
\theta^{o}_{(j,k)}=0\, ,\qquad \xi_{(j,k)}=0\, 
 \]
(cf. Lemma \ref{lem:smp} (ii)). \qquad\end{proof}

We now confine ourselves to the case $n=2$. In Lemmas \ref{lem:2} and \ref{lem:3} below, we provide the integral equations which identify the functions $\theta^{i}_{1,(1,0)}$, $\theta^{i}_{2,(1,0)}$, $\theta^{i}_{1,(0,1)}$, and $\theta^{i}_{2,(0,1)}$.

\begin{lemma}\label{lem:2}
Let $n=2$. Let $r_\ast=0$. Let the assumptions of Proposition \ref{prop:ansol} hold.  Then $\theta^{i}_{1,(1,0)}$ is the unique function in $C^{0,\alpha}(\partial \Omega^i_1)$ such that
 \begin{equation}\label{eq:2:5}
 \begin{split}
 &\frac{1}{2} \theta^{i}_{1,(1,0)}(t)+\int_{\partial \Omega^i_1}DS_2(t-s)\nu_{\Omega^i_1}(t) \theta^{i}_{1,(1,0)}(s)\, d\sigma_s\\ 
&  +\sum_{h,k=1}^2(p^1)_h(\nu_{\Omega^i_1}(t))_k\int_{\partial \Omega^o}\big(\partial_h\partial_kS_2\big)(y)\theta^{o}_{(0,0)}(y)\, d\sigma_y=0\qquad \forall t \in \partial \Omega^i_1\, ,
\end{split}
 \end{equation}
 and $\theta^{i}_{2,(1,0)}$ is the unique function in $C^{0,\alpha}(\partial \Omega^i_2)$ such that

\[
 \begin{split}
 &\frac{1}{2} \theta^{i}_{2,(1,0)}(t)+\int_{\partial \Omega^i_2}DS_2(t-s)\nu_{\Omega^i_2}(t) \theta^{i}_{2,(1,0)}(s)\, d\sigma_s\\ 
&  +\sum_{h,k=1}^2(p^2)_h(\nu_{\Omega^i_2}(t))_k\int_{\partial \Omega^o}\big(\partial_h\partial_kS_2\big)(y)\theta^{o}_{(0,0)}(y)\, d\sigma_y=0\qquad \forall t \in \partial \Omega^i_2\, .
\end{split}
\]
Moreover,
\[
\int_{\partial \Omega^i_1}\theta^{i}_{1,(1,0)}\, d\sigma=0\, , \qquad \int_{\partial \Omega^i_2}\theta^{i}_{2,(1,0)}\, d\sigma=0\, .
\]
\end{lemma}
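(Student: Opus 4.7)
My plan is to apply the identity \eqref{eq:ansol:der} with $(j,k)=(1,0)$ to the first component $\Lambda_1$ of $\Lambda$, evaluate at $(\varrho_1,\varrho_2)=(0,0)$, and read off \eqref{eq:2:5}. Throughout, I would use $n=2$, $r_\ast=0$, and the baseline identifications $\Theta^i_1[0,0]=\tilde\theta^i_1$ and $\Theta^o[0,0]=\tilde\theta^o=\theta^o_{(0,0)}$ from Proposition \ref{prop:ansol}, together with the vanishing coefficients $\theta^o_{(1,0)}=0$ and $\xi_{(1,0)}=0$ provided by Lemma \ref{lem:1}.

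I would then analyze the four summands in the definition of $\Lambda_1$ one at a time. The first summand $\tfrac{1}{2}\Theta^i_1(t)+\int_{\partial\Omega^i_1}DS_2(t-s)\nu_{\Omega^i_1}(t)\Theta^i_1(s)\,d\sigma_s$ has no explicit $(\varrho_1,\varrho_2)$-dependence, so differentiating in $\varrho_1$ at $(0,0)$ reproduces the same operator evaluated at $\theta^i_{1,(1,0)}=\partial_{\varrho_1}\Theta^i_1[0,0]$. The second summand carries the prefactor $\varrho_2^{n-1}=\varrho_2$ which kills any $\partial_{\varrho_1}$-derivative when $\varrho_2=0$. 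The constant $-f_1(t)$ contributes nothing. For the third summand,
\[
\int_{\partial\Omega^o}DS_2(\varrho_1 p^1+\varrho_1\varrho_2 t-y)\,\nu_{\Omega^i_1}(t)\,\Theta^o[\varrho_1,\varrho_2](y)\,d\sigma_y,
\]
I would use the product rule. The branch where $\partial_{\varrho_1}$ hits $\Theta^o$ yields a term with $\theta^o_{(1,0)}$, which vanishes by Lemma \ref{lem:1}. The branch where $\partial_{\varrho_1}$ hits the kernel gives, via the chain rule with $\partial_{\varrho_1}(\varrho_1 p^1+\varrho_1\varrho_2 t-y)|_{(0,0)}=p^1$ and the fact that $\partial_h\partial_k S_2$ is even (so $(\partial_h\partial_k S_2)(-y)=(\partial_h\partial_k S_2)(y)$), exactly
\[
\sum_{h,k=1}^{2}(p^1)_h(\nu_{\Omega^i_1}(t))_k\int_{\partial\Omega^o}(\partial_h\partial_k S_2)(y)\,\theta^o_{(0,0)}(y)\,d\sigma_y.
\]
Collecting the surviving contributions yields \eqref{eq:2:5}. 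The argument for $\theta^i_{2,(1,0)}$ is entirely parallel, differentiating $\Lambda_2$ instead.

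For uniqueness, Lemma \ref{lem:smp}(iii), applied to $\Omega^i_1$ (whose exterior is connected by \eqref{dom}), says that $\mu\mapsto \tfrac{1}{2}\mu+\int_{\partial\Omega^i_1}DS_2(\cdot-s)\nu_{\Omega^i_1}(\cdot)\mu(s)\,d\sigma_s$ is a linear homeomorphism of $C^{0,\alpha}(\partial\Omega^i_1)$, so \eqref{eq:2:5} determines $\theta^i_{1,(1,0)}$ uniquely; the same argument with $\Omega^i_2$ handles $\theta^i_{2,(1,0)}$. The integral identities $\int_{\partial\Omega^i_j}\theta^i_{j,(1,0)}\,d\sigma=0$ follow by differentiating \eqref{intf} in $\varrho_1$ at $(0,0)$: the right-hand side is constant in $(\varrho_1,\varrho_2)$, so its $\varrho_1$-derivative is zero.

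The main bookkeeping obstacle is the third summand, namely tracking the chain rule carefully in the argument $\varrho_1 p^1+\varrho_1\varrho_2 t-y$, noticing that the $\varrho_1\varrho_2 t$ contribution to $\partial_{\varrho_1}$ drops out at $(0,0)$ so only the factor $p^1$ survives, and simultaneously invoking Lemma \ref{lem:1} to discard the $\theta^o_{(1,0)}$ branch of the product rule. Everything else amounts to standard differentiation of real analytic maps in Banach spaces and does not require further input beyond Proposition \ref{prop:ansol} and Lemma \ref{lem:smp}.
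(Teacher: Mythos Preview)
Your proposal is correct and follows essentially the same approach as the paper: differentiate $\Lambda_1$ in $\varrho_1$, evaluate at $(0,0)$, use Lemma~\ref{lem:1} to drop the $\theta^o_{(1,0)}$ branch, and invoke Lemma~\ref{lem:smp}(iii) for uniqueness. The only cosmetic difference is that the paper obtains $\int_{\partial\Omega^i_j}\theta^i_{j,(1,0)}\,d\sigma=0$ by integrating \eqref{eq:2:5} directly (using that $\int_{\partial\Omega^i_j}\nu_k\,d\sigma=0$ and the standard identity for the adjoint double layer), whereas you differentiate \eqref{intf}; both routes are equally valid and equally short.
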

 \begin{proof}  If $(\varrho_1,\varrho_2) \in \mathcal{U}$, then by differentiating 
 \[
 \Lambda_1\bigl[\varrho_1, \varrho_2, \Theta^{i}_1[\varrho_1,\varrho_2],\Theta^{i}_2[\varrho_1,\varrho_2], \Theta^{o}[\varrho_1,\varrho_2], \Xi[\varrho_1,\varrho_2]\bigr]
 \]
 for $n=2$, we deduce that
\begin{equation}\label{eq:2:1}
\begin{split}
 \partial_{\varrho_1} &\Lambda_1\bigl[\varrho_1, \varrho_2, \Theta^{i}_1[\varrho_1,\varrho_2],\Theta^{i}_2[\varrho_1,\varrho_2], \Theta^{o}[\varrho_1,\varrho_2], \Xi[\varrho_1,\varrho_2]\bigr](t)\\
 &=\frac{1}{2} \partial_{\varrho_1}\Theta^{i}_1[\varrho_1,\varrho_2](t)+\int_{\partial \Omega^i_1}DS_2(t-s)\nu_{\Omega^i_1}(t) \partial_{\varrho_1}\Theta^{i}_1[\varrho_1,\varrho_2](s)\, d\sigma_s\\ 
&  +\varrho_2\int_{\partial \Omega^i_2}DS_2\bigg((p^1-p^2)+\varrho_2(t-s)\bigg)\nu_{\Omega^i_1}(t) \partial_{\varrho_1}\Theta^{i}_2[\varrho_1,\varrho_2](s)\, d\sigma_s\\
&  +\sum_{h,k=1}^2\int_{\partial \Omega^o}\bigg[\Big(\partial_h\partial_kS_2\Big)\big(\varrho_1p^1+\varrho_1\varrho_2 t-y\big)\bigg](p^1+\varrho_2 t)_h(\nu_{\Omega^i_1}(t))_k\Theta^{o}[\varrho_1,\varrho_2](y)\, d\sigma_y\\
&  +\int_{\partial \Omega^o}DS_2\big(\varrho_1p^1+\varrho_1\varrho_2 t-y\big)\nu_{\Omega^i_1}(t)\partial_{\varrho_1}\Theta^{o}[\varrho_1,\varrho_2](y)\, d\sigma_y\qquad \forall t \in \partial \Omega^i_1\, . 
\end{split}
\end{equation}
Then by equality \eqref{eq:ansol:der}, by formula \eqref{eq:2:1},  by taking $(\varrho_1,\varrho_2)=(0,0)$, by Lemma \ref{lem:1}, and by classical potential theory (see also Lemma \ref{lem:smp} (iii)), we deduce that $\theta^{i}_{1,(1,0)}$ is the unique function in $C^{0,\alpha}(\partial \Omega^i_1)$ such that equation \eqref{eq:2:5} holds. By integrating equality \eqref{eq:2:5}, we also deduce that $\int_{\partial \Omega^i_1}\theta^{i}_{1,(1,0)}\, d\sigma=0$. Similarly, one argues for $\theta^i_{2,(1,0)}$. \qquad\end{proof}

\begin{lemma}\label{lem:3}
Let $n=2$. Let $r_\ast=0$. Let the assumptions of Proposition \ref{prop:ansol} hold. Then $\theta^{i}_{1,(0,1)}$ is the unique function in $C^{0,\alpha}(\partial \Omega^i_1)$ such that 
\[
\begin{split}
\frac{1}{2} \theta^{i}_{1,(0,1)}(t)+&\int_{\partial \Omega^i_1}DS_2(t-s)\nu_{\Omega^i_1}(t)  \theta^{i}_{1,(0,1)}(s)\, d\sigma_s\\ 
& =DS_2\big(p^2-p^1\big)\nu_{\Omega^i_1}(t)\int_{\partial \Omega^i_2} f_2\, d\sigma \qquad \forall t \in \partial \Omega^i_1\, ,
\end{split}
\]
and $\theta^{i}_{2,(0,1)}$ is the unique function in $C^{0,\alpha}(\partial \Omega^i_2)$ such that
\begin{equation}\label{eq:3:5}
\begin{split}
\frac{1}{2} \theta^{i}_{2,(0,1)}(t)+&\int_{\partial \Omega^i_2}DS_2(t-s)\nu_{\Omega^i_2}(t)  \theta^{i}_{2,(0,1)}(s)\, d\sigma_s\\ 
& =DS_2\big(p^1-p^2\big)\nu_{\Omega^i_2}(t)\int_{\partial \Omega^i_1} f_1\, d\sigma  \qquad \forall t \in \partial \Omega^i_2\, .
\end{split}
\end{equation}
In particular,
\[
\int_{\partial \Omega^i_1}\theta^{i}_{1,(0,1)}\, d\sigma=0\, , \qquad \int_{\partial \Omega^i_2}\theta^{i}_{2,(0,1)}\, d\sigma=0\, .
\]
\end{lemma}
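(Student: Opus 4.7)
The plan is to mirror the proof of Lemma \ref{lem:2}, differentiating the identity \eqref{eq:ansol:der} in $\varrho_2$ (rather than $\varrho_1$) with $(j,k)=(0,1)$, and then evaluating at $(\varrho_1,\varrho_2)=(0,0)$. Concretely, I would expand
\[
\partial_{\varrho_2}\Lambda_1\bigl[\varrho_1,\varrho_2,\Theta^{i}_1[\varrho_1,\varrho_2],\Theta^{i}_2[\varrho_1,\varrho_2],\Theta^{o}[\varrho_1,\varrho_2],\Xi[\varrho_1,\varrho_2]\bigr]=0
\]
by the product rule. Since $n=2$, the factor $\varrho_2^{n-1}=\varrho_2$ in front of the integral over $\partial\Omega^i_2$ contributes, upon differentiation and evaluation at $(0,0)$, the surviving term $DS_2(p^1-p^2)\nu_{\Omega^i_1}(t)\int_{\partial\Omega^i_2}\tilde\theta^{i}_2\, d\sigma$. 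All other terms produced by the product rule either vanish because they carry a residual factor of $\varrho_1$ or $\varrho_2$, or else involve $\theta^{o}_{(0,1)}=\partial_{\varrho_2}\Theta^{o}[0,0]$, which is zero by Lemma \ref{lem:1}.

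Next I would invoke identity \eqref{intf} from the proof of Theorem \ref{thm:rep} to rewrite $\int_{\partial\Omega^i_2}\tilde\theta^{i}_2\, d\sigma=\int_{\partial\Omega^i_2}f_2\, d\sigma$, and use the oddness $DS_2(p^1-p^2)=-DS_2(p^2-p^1)$ to bring the result into the form displayed in the statement. Unique solvability of the resulting equation in $C^{0,\alpha}(\partial\Omega^i_1)$ is then immediate from Lemma \ref{lem:smp}(iii) applied to $\Omega=\Omega^i_1$. The analogous equation for $\theta^{i}_{2,(0,1)}$ is obtained symmetrically by differentiating $\Lambda_2$ in place of $\Lambda_1$ and interchanging the roles of the indices $1$ and $2$.

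For the final zero-mean claim, I would integrate each of the two integral equations over the corresponding boundary. On the right-hand side, the constant vector factors out and leaves $\int_{\partial\Omega^i_j}\nu_{\Omega^i_j}\, d\sigma=0$, which vanishes by the divergence theorem applied to a constant vector field on $\Omega^i_j$. On the left-hand side, a Fubini exchange together with the identity
\[
\int_{\partial\Omega^i_j}\int_{\partial\Omega^i_j}DS_2(t-s)\nu_{\Omega^i_j}(t)\mu(s)\, d\sigma_s\, d\sigma_t=\tfrac{1}{2}\int_{\partial\Omega^i_j}\mu\, d\sigma,
\]
which follows from the standard divergence-theorem computation of the integral of the exterior normal derivative of $v^-[\partial\Omega^i_j,\mu]$ over $\partial\Omega^i_j$, collapses the relation to $\int_{\partial\Omega^i_j}\theta^{i}_{j,(0,1)}\, d\sigma=0$.

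The main obstacle is essentially bookkeeping: checking that, among the terms produced by $\partial_{\varrho_2}$ acting on the composition, only the one indicated survives at $(\varrho_1,\varrho_2)=(0,0)$. Conceptually no new idea is required beyond the combined use of Lemma \ref{lem:1}, Lemma \ref{lem:smp}(iii), identity \eqref{intf}, and the oddness of $DS_2$, so the proof should closely parallel that of Lemma \ref{lem:2}.
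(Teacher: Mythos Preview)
Your proposal is correct and follows essentially the same route as the paper's proof: differentiate $\Lambda_1$ (resp.\ $\Lambda_2$) in $\varrho_2$, evaluate at $(0,0)$, use Lemma~\ref{lem:1} to kill the $\partial_{\varrho_2}\Theta^o$ term, and invoke Lemma~\ref{lem:smp}(iii) for unique solvability; the zero-mean conclusion then follows by integrating the resulting equation. Your explicit appeals to \eqref{intf} and to the oddness of $DS_2$ make transparent two steps that the paper leaves implicit.
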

 \begin{proof}  If $(\varrho_1,\varrho_2) \in \mathcal{U}$, then by differentiating 
 \[
 \Lambda_1\bigl[\varrho_1, \varrho_2, \Theta^{i}_1[\varrho_1,\varrho_2],\Theta^{i}_2[\varrho_1,\varrho_2], \Theta^{o}[\varrho_1,\varrho_2], \Xi[\varrho_1,\varrho_2]\bigr]
 \] 
 for $n=2$, we deduce that
\begin{equation}\label{eq:3:1}
\begin{split}
 &\partial_{\varrho_2} \Lambda_1\bigl[\varrho_1, \varrho_2, \Theta^{i}_1[\varrho_1,\varrho_2],\Theta^{i}_2[\varrho_1,\varrho_2], \Theta^{o}[\varrho_1,\varrho_2], \Xi[\varrho_1,\varrho_2]\bigr](t)\\
 &=\frac{1}{2} \partial_{\varrho_2}\Theta^{i}_1[\varrho_1,\varrho_2](t)+\int_{\partial \Omega^i_1}DS_2(t-s)\nu_{\Omega^i_1}(t) \partial_{\varrho_2}\Theta^{i}_1[\varrho_1,\varrho_2](s)\, d\sigma_s\\ 
&  +\int_{\partial \Omega^i_2}DS_2\bigg((p^1-p^2)+\varrho_2(t-s)\bigg)\nu_{\Omega^i_1}(t) \Theta^{i}_2[\varrho_1,\varrho_2](s)\, d\sigma_s\\
&  +\varrho_2\sum_{h,k=1}^2(\nu_{\Omega^i_1}(t))_h\int_{\partial \Omega^i_2}\bigg[\Big(\partial_h\partial_k S_2\Big)\bigg((p^1-p^2)+\varrho_2(t-s)\bigg)\bigg] (t-s)_k\Theta^{i}_2[\varrho_1,\varrho_2](s)\, d\sigma_s\\
&  +\varrho_2 \int_{\partial \Omega^i_2}DS_2\bigg((p^1-p^2)+\varrho_2(t-s)\bigg)\nu_{\Omega^i_1}(t) \partial_{\varrho_2}\Theta^{i}_2[\varrho_1,\varrho_2](s)\, d\sigma_s\\
&+\varrho_1 \sum_{h,k=1}^{2}t_h(\nu_{\Omega^i_1}(t))_k\int_{\partial \Omega^o}\bigg[\Big(\partial_h\partial_kS_2\Big)\big(\varrho_1p^1+\varrho_1\varrho_2 t-y\big)\bigg]\Theta^{o}[\varrho_1,\varrho_2](y)\, d\sigma_y\\
&  +\int_{\partial \Omega^o}DS_2\big(\varrho_1p^1+\varrho_1\varrho_2 t-y\big)\nu_{\Omega^i_1}(t)\partial_{\varrho_2}\Theta^{o}[\varrho_1,\varrho_2](y)\, d\sigma_y\qquad \forall t \in \partial \Omega^i_1\, . 
\end{split}
\end{equation}
Then by equality \eqref{eq:ansol:der}, by formula \eqref{eq:3:1},  by taking $(\varrho_1,\varrho_2)=(0,0)$, by Lemma \ref{lem:1}, and by classical potential theory (see also Lemma \ref{lem:smp} (iii)), we deduce that $\theta^{i}_{1,(0,1)}$ is the unique function in $C^{0,\alpha}(\partial \Omega^i_1)$ such that equation \eqref{eq:3:5} holds. By integrating equality \eqref{eq:3:5}, we also deduce that $\int_{\partial \Omega^i_1}\theta^{i}_{1,(0,1)}\, d\sigma=0$. Analogously, one proceeds for $\theta^i_{2,(0,1)}$. \qquad\end{proof}

\begin{remark}\label{rem:2}
Let $n=2$. Let $r_\ast=0$. Let the assumptions of Proposition \ref{prop:ansol} hold. By arguing as in the proof of Lemma \ref{lem:3}, one shows that
\[
\begin{split}
 \partial_{\varrho_2} &\Lambda_3\bigl[\varrho_1, \varrho_2, \Theta^{i}_1[\varrho_1,\varrho_2],\Theta^{i}_2[\varrho_1,\varrho_2], \Theta^{o}[\varrho_1,\varrho_2], \Xi[\varrho_1,\varrho_2]\bigr](x)\\
&= \int_{\partial \Omega^o}S_2(x-y) \partial_{\varrho_2}\Theta^{o}[\varrho_1,\varrho_2](y)\, d\sigma_y + \partial_{\varrho_2}\Xi[\varrho_1,\varrho_2]\\
 &+\varrho_1 \sum_{j=1}^2\int_{\partial \Omega^i_j}S_2(x-\varrho_1p^j-\varrho_1 \varrho_2 s)\Theta^{i}_j[\varrho_1,\varrho_2](s)\, d\sigma_s\\
&-\varrho_1^2 \varrho_2\sum_{h,j=1}^2 \int_{\partial \Omega^i_j}\bigg[\Big(\partial_hS_2\Big)(x-\varrho_1p^j-\varrho_1 \varrho_2 s)\Big]s_h\Theta^{i}_j[\varrho_1,\varrho_2](s)\, d\sigma_s\\
&+\varrho_1 \varrho_2\sum_{j=1}^2 \int_{\partial \Omega^i_j}S_2(x-\varrho_1p^j-\varrho_1 \varrho_2 s)\partial_{\varrho_2}\Theta^{i}_j[\varrho_1,\varrho_2](s)\, d\sigma_s  \qquad \forall x \in \partial \Omega^o\, ,
\end{split}
\]
for all $(\varrho_1,\varrho_2)\in \mathcal{U}$.
\end{remark}

In the following lemma, instead, we consider $\theta^{o}_{(1,1)}$ and $\xi_{(1,1)}$. 

\begin{lemma}\label{lem:4}
Let $n=2$. Let $r_\ast=0$. Let the assumptions of Proposition \ref{prop:ansol} hold.  Then $(\theta^{o}_{(1,1)},\xi_{(1,1)})$ is the unique pair in $C^{0,\alpha}(\partial \Omega^o)_0\times \mathbb{R}$ such that 
\begin{equation}\label{eq:4:6}
\int_{\partial \Omega^o}S_2(x-y)\theta^o_{(1,1)}(y)\, d\sigma_y +\xi_{(1,1)}=-S_2(x)\sum_{j=1}^2\int_{\partial \Omega^i_j}f_j\,d\sigma \qquad \forall x \in \partial \Omega^o\, .
\end{equation}
\end{lemma}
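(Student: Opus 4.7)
The plan is to carry out the same scheme as in Lemmas \ref{lem:2} and \ref{lem:3}, but for the mixed second derivative $\partial_{\varrho_1}\partial_{\varrho_2}$ applied to the third component $\Lambda_3$ of the identity in \eqref{eq:ansol:der}, and then to specialise to $(\varrho_1,\varrho_2)=(0,0)$. Since the expression for $\partial_{\varrho_2}\Lambda_3[\varrho_1,\varrho_2,\Theta^i_1[\varrho_1,\varrho_2],\Theta^i_2[\varrho_1,\varrho_2],\Theta^o[\varrho_1,\varrho_2],\Xi[\varrho_1,\varrho_2]]$ is already recorded in Remark \ref{rem:2}, the natural approach is to apply one more $\partial_{\varrho_1}$ to that formula and to read off the resulting integral equation on $\partial\Omega^o$.

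Performing the differentiation, the first two terms of Remark \ref{rem:2} contribute $\int_{\partial\Omega^o}S_2(x-y)\theta^o_{(1,1)}(y)\,d\sigma_y+\xi_{(1,1)}$ once one sets $(\varrho_1,\varrho_2)=(0,0)$, using that $\partial_{\varrho_1}\partial_{\varrho_2}\Theta^o[0,0]=\theta^o_{(1,1)}$ and $\partial_{\varrho_1}\partial_{\varrho_2}\Xi[0,0]=\xi_{(1,1)}$. The third term $\varrho_1\sum_{j=1}^{2}\int_{\partial\Omega^i_j}S_2(x-\varrho_1 p^j-\varrho_1\varrho_2 s)\Theta^i_j[\varrho_1,\varrho_2](s)\,d\sigma_s$ contributes, via the Leibniz product rule applied to the factor $\varrho_1$, the quantity $\sum_{j=1}^2\int_{\partial\Omega^i_j}S_2(x)\tilde\theta^i_j(s)\,d\sigma_s=S_2(x)\sum_{j=1}^{2}\int_{\partial\Omega^i_j}\tilde\theta^i_j\,d\sigma$ at $(\varrho_1,\varrho_2)=(0,0)$, since $\Theta^i_j[0,0]=\tilde\theta^i_j$. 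The remaining two terms in Remark \ref{rem:2} carry factors $\varrho_1^2\varrho_2$ and $\varrho_1\varrho_2$ respectively; after a single $\partial_{\varrho_1}$ a factor $\varrho_2$ survives, so they vanish at $(0,0)$. Combining these contributions with \eqref{eq:ansol:der} for $(j,k)=(1,1)$ yields
\[
\int_{\partial\Omega^o}S_2(x-y)\theta^o_{(1,1)}(y)\,d\sigma_y+\xi_{(1,1)}=-S_2(x)\sum_{j=1}^{2}\int_{\partial\Omega^i_j}\tilde\theta^i_j\,d\sigma\qquad\forall x\in\partial\Omega^o,
\]
and an application of \eqref{intf} at $(\varrho_1,\varrho_2)=(0,0)$, which gives $\int_{\partial\Omega^i_j}\tilde\theta^i_j\,d\sigma=\int_{\partial\Omega^i_j}f_j\,d\sigma$, produces exactly \eqref{eq:4:6}.

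Uniqueness of $(\theta^o_{(1,1)},\xi_{(1,1)})$ in $C^{0,\alpha}(\partial\Omega^o)_0\times\mathbb{R}$ is then immediate from Lemma \ref{lem:smp} (ii), which asserts that the map $(\mu,\xi)\mapsto v[\partial\Omega^o,\mu]_{|\partial\Omega^o}+\xi$ is a linear homeomorphism from $C^{0,\alpha}(\partial\Omega^o)_0\times\mathbb{R}$ onto $C^{1,\alpha}(\partial\Omega^o)$; note that the right-hand side of \eqref{eq:4:6} indeed belongs to $C^{1,\alpha}(\partial\Omega^o)$ since $0\in\Omega^o$ and hence $S_2$ is real analytic on $\partial\Omega^o$. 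The only delicate point in the whole argument is the bookkeeping of which terms survive the mixed derivative at $(0,0)$; no new conceptual ingredient beyond Lemmas \ref{lem:1}--\ref{lem:3}, Remark \ref{rem:2}, and the identity \eqref{intf} is required, so I do not expect any genuine obstacle.
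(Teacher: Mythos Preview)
Your proof is correct and follows essentially the same route as the paper: you differentiate the expression for $\partial_{\varrho_2}\Lambda_3$ recorded in Remark \ref{rem:2} with respect to $\varrho_1$, evaluate at $(\varrho_1,\varrho_2)=(0,0)$, use \eqref{intf} to replace $\int_{\partial\Omega^i_j}\tilde\theta^i_j\,d\sigma$ by $\int_{\partial\Omega^i_j}f_j\,d\sigma$, and invoke Lemma \ref{lem:smp} (ii) for uniqueness. The paper writes out the full expression \eqref{eq:4:3} for $\partial_{\varrho_1}\partial_{\varrho_2}\Lambda_3$ before specializing, whereas you track only the terms that survive at $(0,0)$, but the argument is the same.
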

\begin{proof}  If $(\varrho_1,\varrho_2) \in \mathcal{U}$, then by differentiating 
\[ 
\partial_{\varrho_2}\Lambda_3\bigl[\varrho_1, \varrho_2, \Theta^{i}_1[\varrho_1,\varrho_2],\Theta^{i}_2[\varrho_1,\varrho_2], \Theta^{o}[\varrho_1,\varrho_2], \Xi[\varrho_1,\varrho_2]\bigr]
\] 
for $n=2$ (cf.~Remark \ref{rem:2}), we deduce that
\begin{equation}\label{eq:4:3}
\begin{split} 
\partial_{\varrho_1}& \partial_{\varrho_2} \Lambda_3\bigl[\varrho_1, \varrho_2, \Theta^{i}_1[\varrho_1,\varrho_2],\Theta^{i}_2[\varrho_1,\varrho_2], \Theta^{o}[\varrho_1,\varrho_2], \Xi[\varrho_1,\varrho_2]\bigr](x)\\
&= \int_{\partial \Omega^o}S_2(x-y)  \partial_{\varrho_1}\partial_{\varrho_2}\Theta^{o}[\varrho_1,\varrho_2](y)\, d\sigma_y + \partial_{\varrho_1}\partial_{\varrho_2}\Xi[\varrho_1,\varrho_2]\\
&+ \sum_{j=1}^2\int_{\partial \Omega^i_j}S_2(x-\varrho_1p^j-\varrho_1 \varrho_2 s)\Theta^{i}_j[\varrho_1,\varrho_2](s)\, d\sigma_s\\
&-\varrho_1\sum_{h,j=1}^2 \int_{\partial \Omega^i_j}\Big[\big(\partial_h S_2\big)(x-\varrho_1p^j-\varrho_1 \varrho_2 s)\Big](p^j+\varrho_2 s)_h\Theta^{i}_j[\varrho_1,\varrho_2](s)\, d\sigma_s\\ 
&+\varrho_1 \sum_{j=1}^2\int_{\partial \Omega^i_j}S_2(x-\varrho_1p^j-\varrho_1 \varrho_2 s)\partial_{\varrho_1}\Theta^{i}_j[\varrho_1,\varrho_2](s)\, d\sigma_s\\
&-2\varrho_1 \varrho_2\sum_{h,j=1}^2 \int_{\partial \Omega^i_j}\bigg[\Big(\partial_h S_2\Big)(x-\varrho_1p^j-\varrho_1 \varrho_2 s)\bigg]s_h\Theta^{i}_j[\varrho_1,\varrho_2](s)\, d\sigma_s\\
&+\varrho_1^2 \varrho_2\sum_{h,j,k=1}^2 \int_{\partial \Omega^i_j}\bigg[\Big(\partial_h\partial_kS_2\Big)(x-\varrho_1p^j-\varrho_1 \varrho_2 s)\bigg]s_h(p^j+\varrho_2 s)_k\Theta^{i}_j[\varrho_1,\varrho_2](s)\, d\sigma_s\\
&-\varrho_1^2 \varrho_2\sum_{h,j=1}^2 \int_{\partial \Omega^i_j}\bigg[\Big(\partial_h S_2\Big)(x-\varrho_1p^j-\varrho_1 \varrho_2 s)\bigg]s_h\partial_{\varrho_1}\Theta^{i}_j[\varrho_1,\varrho_2](s)\, d\sigma_s\\
&+ \varrho_2\sum_{j=1}^2 \int_{\partial \Omega^i_j}S_2(x-\varrho_1p^j-\varrho_1 \varrho_2 s)\partial_{\varrho_2}\Theta^{i}_j[\varrho_1,\varrho_2](s)\, d\sigma_s\\
&-\varrho_1 \varrho_2\sum_{h,j=1}^2 \int_{\partial \Omega^i_j}\bigg[\Big(\partial_h S_2\Big)(x-\varrho_1p^j-\varrho_1 \varrho_2 s)\bigg](p^j+\varrho_2s)_h \partial_{\varrho_2}\Theta^{i}_j[\varrho_1,\varrho_2](s)\, d\sigma_s\\
&+\varrho_1 \varrho_2\sum_{j=1}^2 \int_{\partial \Omega^i_j}S_2(x-\varrho_1p^j-\varrho_1 \varrho_2 s)\partial_{\varrho_1}\partial_{\varrho_2}\Theta^{i}_j[\varrho_1,\varrho_2](s)\, d\sigma_s  \qquad \forall x \in \partial \Omega^o\, .
\end{split}
\end{equation}
Then by equality \eqref{eq:ansol:der}, by formula \eqref{eq:4:3},  by equality \eqref{intf}, by taking $(\varrho_1,\varrho_2)=(0,0)$, and by classical potential theory (see also Lemma \ref{lem:smp} (ii)), we deduce that $(\theta^{o}_{(1,1)},\xi_{(1,1)})$ is the unique pair in $C^{0,\alpha}(\partial \Omega^o)_0 \times \mathbb{R}$ such that equation \eqref{eq:4:6} holds. \qquad\end{proof}

In Lemmas \ref{lem:7} and \ref{lem:8},  we turn to consider $(\theta^{o}_{(1,2)},\xi_{(1,2)})$ and $(\theta^{o}_{(2,1)},\xi_{(2,1)})$.

\begin{lemma}\label{lem:7}
Let $n=2$. Let $r_\ast=0$. Let the assumptions of Proposition \ref{prop:ansol} hold. Then $\theta^o_{(1,2)}=0$ and $\xi_{(1,2)}=0$.
\end{lemma}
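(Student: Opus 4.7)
The plan is to mimic the strategy of Lemma \ref{lem:4}, but push one order further in $\varrho_2$. Since $\Lambda_3$ vanishes identically on $\mathcal{U}$ along the graph $(\Theta^{i}_1,\Theta^{i}_2,\Theta^{o},\Xi)$ (cf.~\eqref{eq:ansol:der}), we have in particular
\[
\partial_{\varrho_1}\partial_{\varrho_2}^{2}\Lambda_{3}\bigl[\varrho_1,\varrho_2,\Theta^{i}_1[\varrho_1,\varrho_2],\Theta^{i}_2[\varrho_1,\varrho_2],\Theta^{o}[\varrho_1,\varrho_2],\Xi[\varrho_1,\varrho_2]\bigr]=0
\]
for all $(\varrho_1,\varrho_2) \in \mathcal{U}$. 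I would evaluate this identity at $(\varrho_1,\varrho_2)=(0,0)$, using the explicit formula \eqref{eq:4:3} for $\partial_{\varrho_1}\partial_{\varrho_2}\Lambda_3$ as the starting point and differentiating once more with respect to $\varrho_2$.

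The first two terms of \eqref{eq:4:3} contribute, at $(0,0)$, the principal part
\[
\int_{\partial\Omega^{o}}S_{2}(x-y)\theta^{o}_{(1,2)}(y)\,d\sigma_y+\xi_{(1,2)}\qquad \forall x\in\partial\Omega^{o}\,.
\]
For every other summand on the right-hand side of \eqref{eq:4:3}, I expect that either (a) the expression still carries a positive power of $\varrho_1$ after applying $\partial_{\varrho_2}$, hence vanishes at $\varrho_1=0$, or (b) after $\partial_{\varrho_2}$ and evaluation at $(0,0)$ the surviving contribution is a constant in $x$ times $\sum_j\int_{\partial\Omega^i_j}\theta^{i}_{j,(0,1)}\,d\sigma$, which is zero by Lemma \ref{lem:3}. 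Concretely: the term $\sum_{j}\int S_{2}(x-\varrho_1 p^j-\varrho_1\varrho_2 s)\Theta^{i}_j\,d\sigma_s$ in \eqref{eq:4:3}, after $\partial_{\varrho_2}$, produces either a factor $\varrho_1$ (when the derivative hits $S_2$) or the term $\sum_j\int S_2(x)\,\theta^{i}_{j,(0,1)}(s)\,d\sigma_s=S_2(x)\sum_j\int_{\partial\Omega^i_j}\theta^{i}_{j,(0,1)}\,d\sigma=0$; the $\varrho_2\sum_{j}\int S_{2}\,\partial_{\varrho_2}\Theta^{i}_{j}$ term reduces similarly to $S_2(x)\sum_j\int_{\partial\Omega^i_j}\theta^{i}_{j,(0,1)}\,d\sigma=0$; all remaining summands in \eqref{eq:4:3} come with an explicit factor of $\varrho_1$, $\varrho_1\varrho_2$, $\varrho_1^2\varrho_2$, or $\varrho_1\varrho_2^2$, which still retain a positive power of $\varrho_1$ after applying a single $\partial_{\varrho_2}$ and therefore vanish at $\varrho_1=0$.

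Collecting everything, the identity $\partial_{\varrho_1}\partial_{\varrho_2}^{2}\Lambda_3|_{(0,0)}=0$ reduces to
\[
\int_{\partial\Omega^{o}}S_{2}(x-y)\theta^{o}_{(1,2)}(y)\,d\sigma_y+\xi_{(1,2)}=0 \qquad \forall x\in\partial\Omega^{o}\,,
\]
so Lemma \ref{lem:smp} (ii) yields at once $\theta^{o}_{(1,2)}=0$ and $\xi_{(1,2)}=0$.

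The main obstacle is bookkeeping rather than conceptual: one must differentiate \eqref{eq:4:3} once more in $\varrho_2$ (producing roughly two dozen summands by the Leibniz rule) and then check term by term that each contribution either factors through $\varrho_1$ or is killed by the vanishing integrals from Lemma \ref{lem:3}. Everything else is a direct application of the Implicit Function Theorem framework already established in Proposition \ref{prop:ansol} and of classical potential-theoretic invertibility.
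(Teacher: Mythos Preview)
Your proposal is correct and follows essentially the same route as the paper: differentiate \eqref{eq:4:3} once more in $\varrho_2$, observe that apart from the principal term $\int_{\partial\Omega^o}S_2(x-y)\theta^o_{(1,2)}\,d\sigma_y+\xi_{(1,2)}$ the only contributions at $(0,0)$ not carrying a factor of $\varrho_1$ or $\varrho_2$ are multiples of $S_2(x)\sum_j\int_{\partial\Omega^i_j}\theta^i_{j,(0,1)}\,d\sigma$, which vanish by Lemma \ref{lem:3}, and then conclude via Lemma \ref{lem:smp} (ii). The paper condenses your term-by-term accounting into the form $2\sum_j\int S_2(x-\varrho_1 p^j-\varrho_1\varrho_2 s)\partial_{\varrho_2}\Theta^i_j\,d\sigma_s+\varrho_1 R_1+\varrho_2 R_2$, but the argument is identical.
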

\begin{proof}  If $(\varrho_1,\varrho_2) \in \mathcal{U}$, then by differentiating 
\[
 \partial_{\varrho_1}\partial_{\varrho_2} \Lambda_3\bigl[\varrho_1, \varrho_2, \Theta^{i}_1[\varrho_1,\varrho_2],\Theta^{i}_2[\varrho_1,\varrho_2], \Theta^{o}[\varrho_1,\varrho_2], \Xi[\varrho_1,\varrho_2]\bigr]
 \] 
 for $n=2$ (cf.~equality \eqref{eq:4:3}), we deduce that
\begin{equation}\label{eq:7:1}
\begin{split} 
\partial_{\varrho_1} \partial_{\varrho_2}^2 &\Lambda_3\bigl[\varrho_1, \varrho_2, \Theta^{i}_1[\varrho_1,\varrho_2],\Theta^{i}_2[\varrho_1,\varrho_2], \Theta^{o}[\varrho_1,\varrho_2], \Xi[\varrho_1,\varrho_2]\bigr](x)\\
&= \int_{\partial \Omega^o}S_2(x-y)  \partial_{\varrho_1}\partial_{\varrho_2}^2\Theta^{o}[\varrho_1,\varrho_2](y)\, d\sigma_y + \partial_{\varrho_1}\partial_{\varrho_2}^2\Xi[\varrho_1,\varrho_2]\\
&+2 \sum_{j=1}^2\int_{\partial \Omega^i_j}S_2(x-\varrho_1p^j-\varrho_1 \varrho_2 s)\partial_{\varrho_2}\Theta^{i}_j[\varrho_1,\varrho_2](s)\, d\sigma_s\\
&+\varrho_1 R_{1}[\varrho_1,\varrho_2](x)+\varrho_2 R_{2}[\varrho_1,\varrho_2](x) \qquad \forall x \in \partial \Omega^o\, ,
\end{split}
\end{equation}
where $R_1$, $R_2$ are real analytic maps from $\mathcal{U}$ to $C^{1,\alpha}(\partial \Omega^o)$. Then by equality \eqref{eq:ansol:der}, by formula \eqref{eq:7:1},  by taking $(\varrho_1,\varrho_2)=(0,0)$, and by Lemma \ref{lem:3}, we deduce that $(\theta^{o}_{(1,2)},\xi_{(1,2)})$ is such that 
\[
\int_{\partial \Omega^o}S_2(x-y)\theta^o_{(1,2)}(y)\, d\sigma_y +\xi_{(1,2)}=0 \qquad \forall x \in \partial \Omega^o\, .
\]
Then by Lemma \ref{lem:smp} (ii) we deduce that $(\theta^{o}_{(1,2)},\xi_{(1,2)})=(0,0)$. \qquad\end{proof}

\begin{lemma}\label{lem:8}
Let $n=2$. Let $r_\ast=0$. Let the assumptions of Proposition \ref{prop:ansol} hold. Then $(\theta^{o}_{(2,1)},\xi_{(2,1)})$ is the unique pair in $C^{0,\alpha}(\partial \Omega^o)_0\times \mathbb{R}$ such that
\begin{equation}\label{eq:8:4}
\int_{\partial \Omega^o}S_2(x-y)\theta^o_{(2,1)}(y)\, d\sigma_y +\xi_{(2,1)}=2 \sum_{h,j=1}^2 (p^j)_h\partial_h S_2(x)\int_{\partial \Omega^i_j}f_j\,d\sigma \qquad \forall x \in \partial \Omega^o\, .
\end{equation}
\end{lemma}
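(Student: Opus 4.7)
The plan is to mimic exactly the strategy used in the proofs of Lemmas \ref{lem:4} and \ref{lem:7}: take one further $\varrho_1$-derivative of the expression \eqref{eq:4:3} for $\partial_{\varrho_1}\partial_{\varrho_2}\Lambda_3[\varrho_1,\varrho_2,\Theta^i_1[\varrho_1,\varrho_2],\Theta^i_2[\varrho_1,\varrho_2],\Theta^o[\varrho_1,\varrho_2],\Xi[\varrho_1,\varrho_2]]$, set $(\varrho_1,\varrho_2)=(0,0)$, use \eqref{eq:ansol:der} to conclude that the result is zero, and then simplify. The left-hand side contribution from the first line of \eqref{eq:4:3} will produce $\int_{\partial\Omega^o}S_2(x-y)\theta^o_{(2,1)}(y)\,d\sigma_y+\xi_{(2,1)}$; all other terms must then be identified as producing the asserted right-hand side. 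Uniqueness will finally follow from Lemma \ref{lem:smp}(ii).

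The bookkeeping is where the content lies: after differentiating \eqref{eq:4:3} once more in $\varrho_1$, any summand already carrying a factor $\varrho_1\varrho_2$, $\varrho_1^2\varrho_2$ or $\varrho_2$ survives the single derivative only in the form of terms still carrying a $\varrho_2$ or $\varrho_1$ prefactor, so all of these vanish at $(\varrho_1,\varrho_2)=(0,0)$. Consequently the only surviving contributions (besides the one giving $\theta^o_{(2,1)}$ and $\xi_{(2,1)}$) come from $\varrho_1$-differentiating, respectively, the prefactor-free term
\[
\sum_{j=1}^2\int_{\partial\Omega^i_j}S_2(x-\varrho_1 p^j-\varrho_1\varrho_2 s)\Theta^i_j[\varrho_1,\varrho_2](s)\,d\sigma_s
\]
and the term with an explicit $\varrho_1$ prefactor
\[
-\varrho_1\sum_{h,j=1}^2\int_{\partial\Omega^i_j}\bigl[(\partial_hS_2)(x-\varrho_1 p^j-\varrho_1\varrho_2 s)\bigr](p^j+\varrho_2 s)_h\,\Theta^i_j[\varrho_1,\varrho_2](s)\,d\sigma_s.
\]
Each of these, after $\partial_{\varrho_1}$ is applied and $(\varrho_1,\varrho_2)=(0,0)$ is imposed, yields $-\sum_{h,j}(p^j)_h(\partial_hS_2)(x)\int_{\partial\Omega^i_j}\Theta^i_j[0,0]\,d\sigma$ plus a term proportional to $S_2(x)\sum_j\int_{\partial\Omega^i_j}\theta^i_{j,(1,0)}\,d\sigma$. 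The latter vanishes by Lemma \ref{lem:2}, while \eqref{intf} at $(\varrho_1,\varrho_2)=(0,0)$ converts the former to $-\sum_{h,j}(p^j)_h(\partial_hS_2)(x)\int_{\partial\Omega^i_j}f_j\,d\sigma$. The two copies add up to the factor $2$ appearing in \eqref{eq:8:4}.

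The main obstacle is purely the combinatorial bookkeeping of the ten-line formula \eqref{eq:4:3} under one more $\varrho_1$-differentiation: one must verify carefully that every term containing an overall $\varrho_2$ indeed disappears at $(0,0)$ (these arise both from the $\partial_{\varrho_2}$ that was originally applied and from Taylor remainders introduced by the chain rule on the argument $x-\varrho_1 p^j-\varrho_1\varrho_2 s$). Once this is done, Lemma \ref{lem:1} guarantees that no further contributions come from derivatives of $\Theta^o$ and $\Xi$ of pure order in one variable, \eqref{intf} handles the integrals of $\Theta^i_j[0,0]$, Lemma \ref{lem:2} handles the integrals of $\theta^i_{j,(1,0)}$, and the uniqueness statement is a direct consequence of the fact that the operator $(\theta^o,\xi)\mapsto \int_{\partial\Omega^o}S_2(\cdot-y)\theta^o(y)\,d\sigma_y+\xi$ is a linear homeomorphism from $C^{0,\alpha}(\partial\Omega^o)_0\times\mathbb{R}$ onto $C^{1,\alpha}(\partial\Omega^o)$ (Lemma \ref{lem:smp}(ii)).
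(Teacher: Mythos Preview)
Your proposal is correct and follows exactly the paper's approach: differentiate \eqref{eq:4:3} once more in $\varrho_1$, evaluate at $(0,0)$, and invoke \eqref{eq:ansol:der}, \eqref{intf}, Lemma~\ref{lem:2}, and Lemma~\ref{lem:smp}(ii). One small bookkeeping slip: the term $-\varrho_1\sum_{h,j}\int(\partial_hS_2)(p^j+\varrho_2 s)_h\Theta^i_j$ does \emph{not} by itself produce a $S_2(x)\sum_j\int\theta^i_{j,(1,0)}$ contribution after $\partial_{\varrho_1}$ at $(0,0)$---that contribution instead comes from the $\varrho_1$-prefactor term $\varrho_1\sum_j\int S_2\,\partial_{\varrho_1}\Theta^i_j$ in \eqref{eq:4:3}, which you omitted; since $\int_{\partial\Omega^i_j}\theta^i_{j,(1,0)}\,d\sigma=0$ anyway, the conclusion is unaffected.
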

\begin{proof}  If $(\varrho_1,\varrho_2) \in \mathcal{U}$, then by differentiating 
\[
\partial_{\varrho_1} \partial_{\varrho_2}\Lambda_3\bigl[\varrho_1, \varrho_2, \Theta^{i}_1[\varrho_1,\varrho_2],\Theta^{i}_2[\varrho_1,\varrho_2], \Theta^{o}[\varrho_1,\varrho_2], \Xi[\varrho_1,\varrho_2]\bigr]
\] 
for $n=2$ (cf.~equality \eqref{eq:4:3}), we deduce that
\begin{equation}\label{eq:8:1}
\begin{split}
\partial_{\varrho_1}^2 &\partial_{\varrho_2} \Lambda_3\bigl[\varrho_1, \varrho_2, \Theta^{i}_1[\varrho_1,\varrho_2],\Theta^{i}_2[\varrho_1,\varrho_2], \Theta^{o}[\varrho_1,\varrho_2], \Xi[\varrho_1,\varrho_2]\bigr](x)\\
&= \int_{\partial \Omega^o}S_2(x-y)  \partial_{\varrho_1}^2\partial_{\varrho_2}\Theta^{o}[\varrho_1,\varrho_2](y)\, d\sigma_y + \partial_{\varrho_1}^2\partial_{\varrho_2}\Xi[\varrho_1,\varrho_2]\\
&- \sum_{h,j=1}^2\int_{\partial \Omega^i_j}\bigg[\Big(\partial_h S_2\Big)(x-\varrho_1p^j-\varrho_1 \varrho_2 s)\bigg](p^j+\varrho_2 s)_h\Theta^{i}_j[\varrho_1,\varrho_2](s)\, d\sigma_s\\
&+ \sum_{j=1}^2\int_{\partial \Omega^i_j}S_2(x-\varrho_1p^j-\varrho_1 \varrho_2 s)\partial_{\varrho_1}\Theta^{i}_j[\varrho_1,\varrho_2](s)\, d\sigma_s\\
&- \sum_{h,j=1}^2\int_{\partial \Omega^i_j}\bigg[\Big(\partial_h S_2\Big)(x-\varrho_1p^j-\varrho_1 \varrho_2 s)\bigg](p^j+\varrho_2 s)_h\Theta^{i}_j[\varrho_1,\varrho_2](s)\, d\sigma_s\\
&+ \sum_{j=1}^2\int_{\partial \Omega^i_j}S_2(x-\varrho_1p^j-\varrho_1 \varrho_2 s)\partial_{\varrho_1}\Theta^{i}_j[\varrho_1,\varrho_2](s)\, d\sigma_s\\&+\varrho_1 R_{3}[\varrho_1,\varrho_2](x)+\varrho_2 R_{4}[\varrho_1,\varrho_2](x) \qquad \forall x \in \partial \Omega^o\, ,
\end{split}
\end{equation}
where $R_3, R_{4}$ are real analytic maps from $\mathcal{U}$ to $C^{1,\alpha}(\partial \Omega^o)$. Then by equality \eqref{eq:ansol:der}, by formula \eqref{eq:8:1},  by taking $(\varrho_1,\varrho_2)=(0,0)$, by Lemma \ref{lem:2}, and by classical potential theory (see also Lemma \ref{lem:smp} (ii)), we deduce that $(\theta^{o}_{(2,1)},\xi_{(2,1)})$ is the unique pair in $C^{0,\alpha}(\partial \Omega^o)_0\times \mathbb{R}$ such that  equation \eqref{eq:8:4} holds. \qquad\end{proof}
 
 We now exploit the previous results to compute an expansion of the sum of the last two terms in the representation formula \eqref{eq:finteq1}. Indeed, by standard calculus, we deduce the validity of the following.

\begin{lemma}\label{lem:10}
Let $n=2$. Let $r_\ast=0$. Let the assumptions of Proposition \ref{prop:ansol} hold. If $x \in \mathrm{cl}\Omega^o$ is fixed, then
\[
\begin{split}
\int_{\partial \Omega^o}S_2(x-y)&\Theta^o[\varrho_1,\varrho_2](y)\, d\sigma_y +\Xi[\varrho_1,\varrho_2]=
u_{(0,0)}(x)+\varrho_1\varrho_2 u_{(1,1)}(x)\\
&+\frac{1}{2}\varrho_1^2\varrho_2u_{(2,1)}(x)+O(|\varrho_1^3\varrho_2|+|\varrho_1^2\varrho_2^2|+|\varrho_1\varrho_2^3|)\, ,
\end{split}
\]
as $(\varrho_1,\varrho_2)$ tends to $(0,0)$, where
\[
u_{(j,k)} \equiv v^+[\partial \Omega^o,\theta^o_{(j,k)}] +\xi_{(j,k)} \qquad  \forall (j,k) \in \{(0,0), (1,1), (2,1)\}\, .
\]
\end{lemma}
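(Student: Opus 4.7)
\textbf{Proof plan for Lemma \ref{lem:10}.} The plan is to start from the Taylor expansion of $(\Theta^o,\Xi)$ around $(0,0)$ provided by Proposition \ref{prop:ansol} (and written out explicitly in the paragraph just before Lemma \ref{lem:1}), then substitute it into the quantity
\[
F[\varrho_1,\varrho_2](x)\equiv\int_{\partial\Omega^o}S_2(x-y)\Theta^o[\varrho_1,\varrho_2](y)\,d\sigma_y+\Xi[\varrho_1,\varrho_2],
\]
and finally prune the resulting double series by invoking the vanishing results already proved.

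First I would observe that the map $\mu\mapsto v^+[\partial\Omega^o,\mu](x)=\int_{\partial\Omega^o}S_2(x-y)\mu(y)\,d\sigma_y$ (at a fixed $x\in\mathrm{cl}\Omega^o$) is linear and continuous from $C^{0,\alpha}(\partial\Omega^o)_0$ to $\mathbb{R}$ (cf.~Lemma \ref{lem:smp} (i) together with the trace/evaluation at $x$). Consequently, by the absolute convergence of the series for $\Theta^o[\varrho_1,\varrho_2]$ in $C^{0,\alpha}(\partial\Omega^o)_0$ uniformly for $(\varrho_1,\varrho_2)$ in a compact neighborhood of $(0,0)$, one may exchange the integral with the double sum and obtain
\[
F[\varrho_1,\varrho_2](x)=\sum_{(j,k)\in\mathbb{N}^2}\frac{\varrho_1^j\varrho_2^k}{j!\,k!}\,u_{(j,k)}(x),\qquad u_{(j,k)}(x)\equiv v^+[\partial\Omega^o,\theta^o_{(j,k)}](x)+\xi_{(j,k)},
\]
with absolute convergence in a neighborhood of $(0,0)$.

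Next I would eliminate the zero coefficients. By Lemma \ref{lem:1} (with $n=2$), the coefficients $\theta^o_{(j,0)}$, $\theta^o_{(0,k)}$, $\xi_{(j,0)}$, $\xi_{(0,k)}$ vanish for every $(j,k)\in\mathbb{N}^2\setminus\{(0,0)\}$, hence $u_{(j,0)}=u_{(0,k)}=0$ for all such $(j,k)$. Therefore only multi-indices $(j,k)$ with $j\geq 1$ and $k\geq 1$ contribute, besides $u_{(0,0)}$. Among the ones of total degree $\leq 3$, the only two are $(1,1)$ and $(2,1)$ and $(1,2)$; by Lemma \ref{lem:7} we have $u_{(1,2)}=0$, so the first three surviving terms in the expansion are precisely
\[
u_{(0,0)}(x)+\varrho_1\varrho_2\,u_{(1,1)}(x)+\tfrac{1}{2}\,\varrho_1^2\varrho_2\,u_{(2,1)}(x).
\]

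Finally, for the remainder I would use the absolute convergence of the series: the leftover terms are a locally uniformly convergent sum over pairs $(j,k)$ with $j\geq 1$, $k\geq 1$ and $j+k\geq 4$. Each such monomial $\varrho_1^j\varrho_2^k$ can be factored as $\varrho_1^{j-3}\varrho_2^{k-1}\cdot\varrho_1^3\varrho_2$, or $\varrho_1^{j-2}\varrho_2^{k-2}\cdot\varrho_1^2\varrho_2^2$, or $\varrho_1^{j-1}\varrho_2^{k-3}\cdot\varrho_1\varrho_2^3$, whichever has nonnegative exponents; therefore, for $(\varrho_1,\varrho_2)$ in a compact neighborhood of $(0,0)$, the tail is bounded by a constant times $|\varrho_1^3\varrho_2|+|\varrho_1^2\varrho_2^2|+|\varrho_1\varrho_2^3|$. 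This yields the claimed expansion. I do not anticipate any serious obstacle: the argument is bookkeeping on an absolutely convergent Taylor series, and all substantive vanishing has already been established in Lemmas \ref{lem:1} and \ref{lem:7}.
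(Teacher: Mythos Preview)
Your proposal is correct and follows exactly the approach the paper has in mind: the paper's own proof is the single sentence ``by standard calculus, we deduce the validity of the following,'' which amounts to writing the Taylor expansion of $(\Theta^o,\Xi)$, pushing it through the continuous linear map $\mu\mapsto v^+[\partial\Omega^o,\mu](x)$, and deleting the vanishing coefficients supplied by Lemmas \ref{lem:1} and \ref{lem:7}. Your write-up simply makes these steps explicit (note the small slip ``the only two are $(1,1)$ and $(2,1)$ and $(1,2)$''---you mean three before Lemma \ref{lem:7} kills $(1,2)$).
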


Instead, in the following lemma, we consider the remaining part of formula \eqref{eq:finteq1}.
 
 \begin{lemma}\label{lem:11}
Let $n=2$. Let $r_\ast=0$. Let the assumptions of Proposition \ref{prop:ansol} hold. Let $x \in \mathrm{cl}\Omega^o \setminus \{0\}$ be fixed. Then we have
 \begin{equation}\label{eq:lem11:0}
\begin{split}
&\sum_{j=1}^2 \int_{\partial \Omega^i_j}S_2(x-\varrho_1 p^j -\varrho_1 \varrho_2 s) \Theta^i_j[\varrho_1,\varrho_2](s)\, d\sigma_s\\
&=S_2(x)\sum_{j=1}^2 \int_{\partial \Omega^i_j}f_j\, d\sigma -\varrho_1\sum_{h,j=1}^2 (\partial_h S_2)(x)(p^j)_h \int_{\partial \Omega^i_j}f_j\, d\sigma+O(|\varrho_1^2|+|\varrho_1\varrho_2|+|\varrho_2^2|) \, ,
\end{split}
 \end{equation}
as $(\varrho_1,\varrho_2)$ tends to $(0,0)$. 
\end{lemma}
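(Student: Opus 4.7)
The approach is a direct two-variable Taylor expansion combined with the conservation-of-flux identity \eqref{intf}. Since $x \in \mathrm{cl}\Omega^o \setminus \{0\}$ is fixed and bounded away from $0$, for $(\varrho_1,\varrho_2)$ in a sufficiently small neighborhood of $(0,0)$ the point $x - \varrho_1 p^j - \varrho_1 \varrho_2 s$ stays in a compact subset of $\mathbb{R}^2 \setminus \{0\}$ uniformly in $s \in \partial \Omega^i_j$. Hence $S_2(x - \varrho_1 p^j - \varrho_1 \varrho_2 s)$ is real analytic in $(\varrho_1,\varrho_2)$, uniformly in $s$, and I would Taylor expand at $(0,0)$ to get
\[
S_2(x - \varrho_1 p^j - \varrho_1 \varrho_2 s) = S_2(x) - \varrho_1 \sum_{h=1}^2 (\partial_h S_2)(x)(p^j)_h - \varrho_1\varrho_2 \sum_{h=1}^2 (\partial_h S_2)(x) s_h + O(\varrho_1^2),
\]
with the remainder uniform in $s \in \partial \Omega^i_j$.

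By Proposition \ref{prop:ansol} the map $\Theta^i_j[\varrho_1,\varrho_2]$ is real analytic into $C^{0,\alpha}(\partial \Omega^i_j)$ near $(0,0)$, so in particular $\|\Theta^i_j[\varrho_1,\varrho_2]\|_{C^{0,\alpha}}$ is uniformly bounded for small $(\varrho_1,\varrho_2)$. I would multiply the above expansion by $\Theta^i_j[\varrho_1,\varrho_2](s)$ and integrate over $\partial \Omega^i_j$. The two leading coefficients in the expansion of $S_2$ (namely $S_2(x)$ and $-\sum_h (\partial_h S_2)(x)(p^j)_h$) do not depend on $s$, so they factor out of the integral, and what remains against them is exactly $\int_{\partial \Omega^i_j} \Theta^i_j[\varrho_1,\varrho_2] \, d\sigma = \int_{\partial \Omega^i_j} f_j\, d\sigma$ by \eqref{intf}. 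This is the key observation: \eqref{intf} is an exact identity, not an asymptotic one, so no expansion of $\Theta^i_j$ beyond order zero is required to reach the claimed accuracy.

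The two remaining contributions are the $\varrho_1\varrho_2$-term $-\varrho_1 \varrho_2 \sum_h (\partial_h S_2)(x) \int_{\partial \Omega^i_j} s_h \Theta^i_j[\varrho_1,\varrho_2](s)\, d\sigma_s$, which is $O(\varrho_1 \varrho_2)$ by the uniform boundedness of $\Theta^i_j$, and the $O(\varrho_1^2)$ remainder from the Taylor expansion of $S_2$ integrated against the bounded family $\Theta^i_j$. Both are absorbed into $O(|\varrho_1^2| + |\varrho_1 \varrho_2| + |\varrho_2^2|)$, and summing over $j \in \{1,2\}$ yields \eqref{eq:lem11:0}.

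I do not anticipate any serious obstacle; the computation is essentially bookkeeping. The only conceptual point is the use of \eqref{intf}, which replaces what would otherwise be a tedious expansion of $\int_{\partial \Omega^i_j} \Theta^i_j \, d\sigma$ via Lemmas \ref{lem:2} and \ref{lem:3} by an exact identity, and thereby reduces the proof to a short Taylor computation.
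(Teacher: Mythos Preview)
Your argument is correct. The overall strategy---real analyticity near $(0,0)$, Taylor expansion, and the identity \eqref{intf}---matches the paper's, but your organization is cleaner. The paper differentiates the full product $S_2(x-\varrho_1 p^j-\varrho_1\varrho_2 s)\,\Theta^i_j[\varrho_1,\varrho_2](s)$ with respect to $\varrho_1$ and $\varrho_2$, evaluates at $(0,0)$, and then invokes Lemmas~\ref{lem:2} and~\ref{lem:3} to kill the cross terms $S_2(x)\int_{\partial\Omega^i_j}\theta^i_{j,(1,0)}\,d\sigma$ and $S_2(x)\int_{\partial\Omega^i_j}\theta^i_{j,(0,1)}\,d\sigma$. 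You instead expand only the kernel $S_2(x-\varrho_1 p^j-\varrho_1\varrho_2 s)$ and leave $\Theta^i_j[\varrho_1,\varrho_2]$ intact; because the two leading Taylor coefficients of the kernel are $s$-independent, the exact identity \eqref{intf} applies directly and no expansion of $\Theta^i_j$ is needed at all. This bypasses Lemmas~\ref{lem:2} and~\ref{lem:3} entirely (their vanishing-integral statements are in any case immediate consequences of differentiating \eqref{intf}), so your route is a genuine shortcut.
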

\begin{proof}  By arguing as in the proof of Theorem \ref{thm:rep}, one verifies that the left hand side of equality \eqref{eq:lem11:0} defines a real analytic function in the variable $(\varrho_1,\varrho_2)$ in a sufficiently small neighborhood of $(0,0)$. We have
 \begin{equation}\label{eq:lem11:1}
 \begin{split}
 \partial_{\varrho_1}&\sum_{j=1}^2 \int_{\partial \Omega^i_j}S_2(x-\varrho_1 p^j -\varrho_1 \varrho_2 s) \Theta^i_j[\varrho_1,\varrho_2](s)\, d\sigma_s \\
 &=-\sum_{h,j=1}^2 \int_{\partial \Omega^i_j}\bigg[\Big(\partial_h S_2\Big)(x-\varrho_1p^j-\varrho_1 \varrho_2 s)\bigg](p^j+\varrho_2 s)_h \Theta^i_j[\varrho_1,\varrho_2](s)\, d\sigma_s\\
 &+\sum_{j=1}^2\int_{\partial \Omega^i_j}S_2(x-\varrho_1p^j-\varrho_1\varrho_2 s)\partial_{\varrho_1}\Theta^i_j[\varrho_1,\varrho_2](s)\, d\sigma_s\, .
 \end{split}
 \end{equation}
 Then for $(\varrho_1,\varrho_2)=(0,0)$ the right hand side of equality \eqref{eq:lem11:1} becomes
 \[
 -\sum_{h,j=1}^2 (\partial_h S_2)(x)(p^j)_h \int_{\partial \Omega^i_j}f_j\, d\sigma
 \]
(cf.~equality \eqref{intf} and Lemma \ref{lem:2}). Similarly,
 \begin{equation}\label{eq:lem11:2}
 \begin{split}
 \partial_{\varrho_2}&\sum_{j=1}^2 \int_{\partial \Omega^i_j}S_2(x-\varrho_1 p^j -\varrho_1 \varrho_2 s) \Theta^i_j[\varrho_1,\varrho_2](s)\, d\sigma_s \\
 &=-\varrho_1\sum_{h,j=1}^2 \int_{\partial \Omega^i_j}\bigg[\Big(\partial_h S_2\Big)(x-\varrho_1p^j-\varrho_1 \varrho_2 s)\bigg]s_h \Theta^i_j[\varrho_1,\varrho_2](s)\, d\sigma_s\\
 &+\sum_{j=1}^2\int_{\partial \Omega^i_j}S_2(x-\varrho_1p^j-\varrho_1\varrho_2 s)\partial_{\varrho_2}\Theta^i_j[\varrho_1,\varrho_2](s)\, d\sigma_s\, ,
 \end{split}
\end{equation} 
and the right hand side of \eqref{eq:lem11:2} equals $0$ for $(\varrho_1,\varrho_2)=(0,0)$ (cf.~Lemma \ref{lem:3}).  As a consequence,  by standard calculus, we deduce the validity of the lemma.\qquad\end{proof} 
 
 Finally, by combining Lemmas \ref{lem:10} and \ref{lem:11}, we deduce the validity of the main result of this section.
 
 \begin{proposition}
 Let $n=2$. Let $r_\ast=0$. Let the assumptions of Proposition \ref{prop:ansol} hold. Let $u_{(j,k)}$ be as in Lemma \ref{lem:10} for all  $(j,k) \in \{(0,0), (1,1), (2,1)\}$. Let $x \in \mathrm{cl}\Omega^o \setminus \{0\}$ be fixed. Then we have
  \[
 \begin{split}
 u[\varrho_1,\varrho_2](x)&=u_{(0,0)}(x)+\varrho_1 \varrho_2 \Big(u_{(1,1)}(x)+S_2(x)\sum_{j=1}^2 \int_{\partial \Omega^i_j}f^j\, d\sigma\Big)\\
 &+\varrho_1^2\varrho_2\Big(\frac{1}{2}u_{(2,1)}(x)-\sum_{h,j=1}^2\partial_h S_2(x)(p^j)_h \int_{\partial \Omega^i_j}f^j\, d\sigma\Big)\\
 &+O(|\varrho_1^3 \varrho_2|+|\varrho_1^2 \varrho_2^2|+|\varrho_1 \varrho_2^3|)\, ,
 \end{split}
 \]
as $(\varrho_1,\varrho_2)$ tends to $(0,0)$. 
 \end{proposition}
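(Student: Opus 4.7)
The plan is to simply combine the two preceding lemmas via the representation formula \eqref{eq:finteq1} of Proposition \ref{prop:finteq}. Specialized to $n=2$, that formula reads
\[
u[\varrho_1,\varrho_2](x)=\varrho_1\varrho_2\sum_{j=1}^2\int_{\partial\Omega^i_j}S_2(x-\varrho_1p^j-\varrho_1\varrho_2 s)\Theta^i_j[\varrho_1,\varrho_2](s)\,d\sigma_s+\int_{\partial\Omega^o}S_2(x-y)\Theta^o[\varrho_1,\varrho_2](y)\,d\sigma_y+\Xi[\varrho_1,\varrho_2],
\]
so the task reduces to expanding the outer integral plus $\Xi$, and the inner sum (once multiplied by the prefactor $\varrho_1\varrho_2$), and adding the two expansions.

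For the outer part I invoke Lemma \ref{lem:10} verbatim, which supplies the terms $u_{(0,0)}(x)+\varrho_1\varrho_2 u_{(1,1)}(x)+\tfrac12\varrho_1^2\varrho_2 u_{(2,1)}(x)$ with remainder $O(|\varrho_1^3\varrho_2|+|\varrho_1^2\varrho_2^2|+|\varrho_1\varrho_2^3|)$. For the inner sum I apply Lemma \ref{lem:11}, which already provides the expansion up to first order in $(\varrho_1,\varrho_2)$ with remainder $O(|\varrho_1^2|+|\varrho_1\varrho_2|+|\varrho_2^2|)$; multiplying by the external $\varrho_1\varrho_2$ converts this remainder into $O(|\varrho_1^3\varrho_2|+|\varrho_1^2\varrho_2^2|+|\varrho_1\varrho_2^3|)$, which matches the order of the outer remainder, so the two error terms combine cleanly.

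Adding the two contributions and collecting monomials in $(\varrho_1,\varrho_2)$, the constant term is $u_{(0,0)}(x)$; the $\varrho_1\varrho_2$ coefficient is $u_{(1,1)}(x)+S_2(x)\sum_{j=1}^2\int_{\partial\Omega^i_j}f_j\,d\sigma$; the $\varrho_1^2\varrho_2$ coefficient is $\tfrac12 u_{(2,1)}(x)-\sum_{h,j=1}^2(\partial_h S_2)(x)(p^j)_h\int_{\partial\Omega^i_j}f_j\,d\sigma$; and the remainder is of the claimed order. This is exactly the stated expansion.

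Since $x\in\mathrm{cl}\Omega^o\setminus\{0\}$, all the kernels $S_2(x-\varrho_1p^j-\varrho_1\varrho_2 s)$ and their derivatives are smooth and uniformly bounded in $s$ for $(\varrho_1,\varrho_2)$ in a small neighborhood of $(0,0)$, so there is no obstruction to evaluating the layer potentials and their Taylor remainders pointwise at $x$; this is the only subtlety, and it is already used (more globally) in the proof of Theorem \ref{thm:rep} and in Lemma \ref{lem:11}. Hence the proof is essentially bookkeeping: apply Proposition \ref{prop:finteq}, substitute the two lemmas, and sum.
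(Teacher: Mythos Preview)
Your proposal is correct and follows exactly the approach of the paper, which simply states that the result follows ``by combining Lemmas \ref{lem:10} and \ref{lem:11}.'' You have merely made explicit the bookkeeping that the paper leaves to the reader: specializing the representation formula \eqref{eq:finteq1} to $n=2$, applying the two lemmas to the outer and inner parts respectively, multiplying the inner expansion by the prefactor $\varrho_1\varrho_2$, and summing.
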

 
 \begin{remark}
 If we further assume that $\int_{\partial \Omega^i_j}f_j \, d\sigma=0$ for all $j \in \{1,2\}$ then we can deduce the existence of functions $\tilde{u}_{(3,1)}$, $\tilde{u}_{(2,2)}$,  and $\tilde{u}_{(1,3)}$ such that
 \[
 \begin{split}
  u[\varrho_1,\varrho_2](x)&=u_{(0,0)}(x)+O(|\varrho_1^3 \varrho_2|+|\varrho_1^2 \varrho_2^2|+|\varrho_1 \varrho_2^3|)\\
&=u_{(0,0)}(x)+\varrho_1^3 \varrho_2 \tilde{u}_{(3,1)}(x)+\varrho_1^2 \varrho_2^2 \tilde{u}_{(2,2)}(x)\\
&+\varrho_1 \varrho_2^3 \tilde{u}_{(1,3)}(x)+ O(|\varrho_1^4 \varrho_2|+|\varrho_1^3 \varrho_2^2|+|\varrho_1^2 \varrho_2^3|+|\varrho_1 \varrho_2^4|)  \, ,
\end{split}
 \]
as $(\varrho_1,\varrho_2)$ tends to $(0,0)$. 
 \end{remark}
 
 \section*{Acknowledgment}
The authors wish to thank V.~Bonnaillie-No\"el, M.~Dambrine, and C.~Lacave for several useful discussions. The work of M.~Dalla Riva and P.~Musolino is supported by ``Progetto di Ateneo: Singular perturbation problems for differential operators -- CPDA120171/12" of the University of Padova. The research  of M.~Dalla Riva  was supported by Portuguese funds through the CIDMA - Center for Research and Development in Mathematics and Applications, and the Portuguese Foundation for Science and Technology (``FCT--Funda{\c c}{\~a}o para a Ci\^encia e a Tecnologia''), within project UID/MAT/04106/2013. M.~Dalla Riva acknowledges also the support from HORIZON 2020 MSC EF project FAANon (grant agreement MSCA-IF-2014-EF - 654795) at the University of Aberystwyth, UK. P.~Musolino is member of the Gruppo Nazionale per l'Analisi Matematica, la Probabilit\`a e le loro Applicazioni (GNAMPA) of the Istituto Nazionale di Alta Matematica (INdAM) and acknowledges the support of  ``INdAM GNAMPA Project 2015 - Un approccio funzionale analitico per problemi di perturbazione singolare e di omogeneizzazione".

\end{document}